\newtheorem{lem}{Lemma}
\newtheorem{thm}{Theorem}
\newtheorem{cor}{Corollary}
\newtheorem{defn}{Definition}
\newcommand{\secref}[1]{Section~\ref{#1}}
\newcommand{\figref}[1]{Figure~\ref{#1}}
\newcommand{\defref}[1]{Definition~\ref{#1}}
\newcommand{\thmref}[1]{Theorem~\ref{#1}}
\newcommand{\lemref}[1]{Lemma~\ref{#1}}
\newcommand{\corref}[1]{Corollary~\ref{#1}}
\newcommand{\appref}[1]{Appendix~\ref{#1}} 
\newcommand{\reals}{{\mathcal{R}}\xspace} 
\newcommand{\nats}{{\mathcal{N}}\xspace} 
\newcommand{\eg}{{\it e.g.}\xspace}          
\newcommand{\ie}{{\it i.e.}\xspace}          
\newcommand{\etc}{{\it etc.\@}\xspace}       
\newcommand{\Order}[1]{O\!\left({#1}\right)}
\newcommand{\llVert}{\left| \left| \left|} 
\newcommand{\rrVert}{\right| \right| \right|} 
\newcommand{\bChar}[1]{\textbf{{#1}}} 
\newcommand{\DGIT}{{DGiT}\xspace}  
\newcommand{\tf}{{t_f}\xspace}
\newcommand{\taugamma}{{\tau_{\Gamma}}\xspace} 
\newcommand{\Fi}{{F_i}\xspace}  
\newcommand{\FOne}{{F_1}\xspace}  
\newcommand{\FTwo}{{F_2}\xspace}  
\newcommand{\tw}{{\tilde{w}}\xspace}
\newcommand{\hu}{{\hat{u}}\xspace} 
\newcommand{\hw}{{\hat{w}}\xspace} 
\newcommand{\whF}{{\widehat{F}}\xspace} 
\newcommand{\whL}{{\widehat{\Lambda}}\xspace}
\title{A Multirate Discontinuous-Galerkin-in-Time Framework for Interface-Coupled Problems\thanks{Submitted to the editors DATEXXX.
\funding{This material is based upon work supported by the U.S. Department of Energy, Office of Science, Office of Advanced Scientific Computing Research under Award Number DE-SC-0000230927 and the Coupling Approaches for Next-Generation Architectures (CANGA) project, a joint effort under the Scientific Discovery through Advanced Computing (SciDAC). Sandia National Laboratories is a multimission laboratory managed and operated by National Technology and Engineering Solutions of Sandia, LLC., a wholly owned subsidiary of Honeywell International, Inc., for the U.S. Department of Energy's National Nuclear Security Administration under contract DE-NA-0003525. This paper describes objective technical results and analysis. Any subjective views or opinions that might be expressed in the paper do not necessarily represent the views of the U.S. Department of Energy or the United States Government. SAND2021-15590 O }}}
\author{Jeffrey M. Connors\thanks{University of Connecticut, Storrs CT  
  (\email{jeffrey.connors@uconn.edu}, \url{https://www2.math.uconn.edu/\string~connors/}).}
\and K. Chad Sockwell\thanks{Center for Computing Research, Sandia National Laboratories, MS-1320, Albuquerque, NM 87185-1320, USA
  (\email{kcsockw@sandia.gov}).}}
\begin{document}

\maketitle

\begin{abstract}
A framework is presented to design multirate time stepping algorithms for two dissipative models 
with coupling across a physical interface.  The coupling takes the form of boundary conditions 
imposed on the interface, relating the solution variables for both models to each other.  
The multirate aspect arises when numerical time integration is performed with different time step 
sizes for the component models.  In this paper, we seek to identify a unified approach 
to develop multirate algorithms for  these coupled problems. This effort is pursued though the use of  discontinuous-Galerkin time stepping methods, acting as a general unified framework, with different time step sizes.  The subproblems  are coupled across  user-defined intervals of time, called {\it coupling windows}, using polynomials 
that are continuous on the window.  The coupling method is shown to reproduce the correct 
interfacial energy dissipation, discrete conservation of fluxes, and asymptotic accuracy. In principle, 
methods of arbitrary order are possible.  As a first step, herein we focus on the presentation and 
analysis of monolithic methods for advection-diffusion models coupled via generalized 
Robin-type conditions.  
The monolithic methods could be computed using a 
Schur-complement approach. We conclude with some discussion of future developments, such as different interface conditions and partitioned methods. 
\end{abstract}

\begin{keywords}
 Galerkin in Time, Coupled-problem stability, interface coupling, multirate methods
\end{keywords}

\begin{AMS}
  65L60,34D20,65M12,65,35
\end{AMS}

\section{Introduction}\label{sec:intro} 
Multiphysics simulations that are decomposed into subproblems, each with their own domain, strongly benefit from unique and specialized treatment of each component, leading to heterogeneous numerical methods.  
If these models exhibit different time 
scales, it is desirable for efficiency to optimize time integrators using 
independent step sizes for each model.  Stable and accurate numerical coupling across subproblem interfaces,
while retaining differently sized time steps and underlying properties of the governing system, remains challenging 
for a general set of problems and interface conditions.  To meet this challenge, there is a need to design new types of
multirate time-stepping methods centered around 
coupling aspects.  Recently, there is 
increasing attention toward the development of multirate algorithms for 
interface-coupled problems, such as  
coupled heat equations~\cite{CHL2009SINUM,MonBir2019}, 
fluid-fluid interaction~\cite{CH2012,CD2019}, 
fluid-structure interaction~\cite{RuthEtAl2020}, 
Stokes-Darcy~\cite{HL2021,LiHou2018,RYBAK2014327,SZL2013}, 
Stokes-Richards~\cite{RybakEtAl2015}, 
Stokes-Darcy-transport~\cite{ZRC2019}, and
dual-porosity-Stokes~\cite{WANG2021265} coupled flows.  More broadly, 
large-scale multiphysics applications may involve many 
interface-coupled components, as exemplified by the Energy Exascale Earth Systems Model (E3SM)~\cite{E3SM}.   

Within the context of multirate interface-coupled problems, problems that benefit from discretizing with differently-sized time steps on each side of the interface, the mismatching of the step sizes \(\Delta t_i\) creates a fundamental issue when coupling two sub-problems together. Each subproblem requires a flux \(\Fi\) which couples the subproblems through an interface at each respective time step, but the calculation of the flux is not clear. Beyond 
accuracy there can be additional considerations like stability, conservation of fluxes between subdomains, or model sensitivity (to name a few), further complicating matters with each additional concern.    

Although the continuous model equations~\eqref{eq:pdes}-\eqref{eq:ics} possess a flux that is defined at all points in time, the multirate discretization generates issues in calculating \(\Fi\).  An elegant solution is to recover the continuity of the flux in time, in some sense. To do this, we propose to define the flux as a new auxiliary variable, with~\eqref{eq:int_bcs}  to be treated as an additional auxiliary equation on a time-scale that is potentially different from either subproblem, associated instead with the coupling window size, \(\Delta t\). This leads to a discrete monolithic system with two subproblems and an auxiliary interfacial problem, all possibly defined on different time scales. The goal is to solve the auxiliary problem in such a way that it  connects the fine time scales of the subproblems together, essentially gluing the two subproblems together over the coarser scale of the coupling window.  This puts the problem into the form of a system of differential algebraic equations (DAEs), in which the auxiliary equation is an algebraic condition in time. In following the solution methods for DAE formulations with coupled problems~\cite{peterson2019explicit, sockwell2020interface,gravouil2015heterogeneous,AscPet1998}, the new auxiliary problem could then be solved before the individual subproblems to determine the fluxes over the coupling window. Then each subproblem can utilize the flux, which is now defined at all substeps. 

To accomplish this, we \textit{equip the flux with a variational structure in time, as opposed to being defined only at some set of points in time}.  This stands in contrast with methods (\eg~\cite{gravouil2015heterogeneous}) that define the interface information on the super-set of time steps, which constricts the choice of the mathematical definition of the flux too much for our goals.  
Although such a strategy could be taken for specific methods and situations, we wish to pursue a general framework for the description and analysis of a large class of methods. We believe such a broad framework will allow some given methods to be used for both sub-problems and then glue them together, regardless of their form or time step sizes.  This is as opposed to a top-down approach that specifies a coupling methodology for a specific time-integrator, but which may limit freedom of choice. 

The desire to prescribe a variational structure in time for the flux, and for a large class of time-integrators, motivates the use of a powerful framework, known as the \textit{Discontinuous Galerkin in Time} (DGiT) framework. The DGiT framework is simply a discontinuous-Galerkin method in the one dimension of time. The utility of the this framework is in its (piecewise) polynomial description of  the time evolution for a system.  Many, and almost all well-known time-integrators fall into this framework. The variational structure allows for quantities, in our the case the flux \(\Fi\) and the solution \(u_i\), to have a continuous form almost everywhere over the time domain; something not considered in typical descriptions of time-integrators. Figure~\ref{fig:time_steps} illustrates the time representation of variables. 
Standard within the~\DGIT framework, states $u_i$ are approximated as polynomials $u_i^n$ on subintervals 
$(t_i^{n-1} , t_i^n)$.  Jumps are allowed between subintervals, so the state values at times $t_i^n$ are denoted 
by $U_i^n$.  However, the fluxes $F_i$ are approximated as a single, continuous polynomial function $F_i^{\tilde{n}}$ on  coupling window number $\tilde{n}$.  These representations are ideal to make rigorous statements about numerical 
coupling properties.

\subsection{Concrete Example}
Coupled, evolutionary diffusion~\cite{CHL2009SINUM,LBD2015,ZHANG2020,ZHENG20085272}  
or advection-diffusion equations~\cite{ERKMEN2017180,PETERSON2019459},   
have been discussed when studying the numerical properties of coupling algorithms, sometimes as proxies for applications.  
We proceed to specify an advection-diffusion model problem used to present the key ideas of the 
multirate coupling framework.  Denote the real and natural number systems 
by $\reals$ and $\nats$, respectively.  Bold font is reserved for vectors.  The type of 
vector (coordinate, column, \etc) may be inferred from the context.  
We reserve $t$ for time, with coordinates in space denoted by 
$\bChar{x}=(x_1 , \ldots , x_d)\in\reals^d$.  

Let $\Gamma = (0,1)^{d-1} \times \{ 0\}\subset \reals^d$, 
choosing $d=2$ or $d=3$.  We consider two open domains, 
$\Omega_1 = \Gamma \times (0,1)$ and 
$\Omega_2 = \Gamma \times (-1,0)$, so their common interface is 
$\overline{\Gamma} = \partial \Omega_1 \cap \partial \Omega_2$.  
The exterior boundary components are  
$\Gamma_i =\partial \Omega_i \setminus \Gamma$, for $i=1,2$. 
Let $\bChar{n}_i$ denote the unit outward-pointing normal vector on 
the boundary of $\Omega_i$.  
Then consider the problem to solve for 
$u_i = u_i(\bChar{x} ,t)$, $i=1,2$, where 
$u_i:\overline{\Omega_i}\times [0,\tf] \to \reals$, 
$\tf \in \reals^+$, satisfies 
\begin{alignat}{2}
  \dot{u}_i &= \nabla \cdot (\nu_i \nabla u_i - \bChar{s}_i u_i) + f_i\quad &\text{on}\;\Omega_i \times (0,\tf ]\;,\label{eq:pdes} \\ 
 u_i &=0 \quad &\text{on}\;\Gamma_i \times (0,\tf ]\;,\label{eq:ext_bcs} \\ 
 -\nu_i \bChar{n}_i \cdot \nabla u_i  
&= b_{i,1} u_1 +b_{i,2} u_2 -g_i \quad &\text{on}\;\Gamma \times (0,\tf ]\;, \label{eq:int_bcs} \\ 
 u_i (\bChar{x},0) &=u_i^0 (\bChar{x}) \quad &\text{on}\;\Omega_i ,\label{eq:ics}
\end{alignat} 
Here, $\dot{u}_i$ denotes time differentiation, $b_{i,j}$ and \(\nu_i >0 \) are constants, 
\( \bChar{s}_i :\overline{\Omega_i}\to \reals^d \) are steady 
advection fields, \(f_i :\Omega_i\times (0,\tf]\to\reals \) are body forcing functions, 
and $g_i:\Gamma \times (0,\tf]\to\reals$ are interfacial forcings.  
Note that $\bChar{n}_i \cdot\bChar{s}_i$ does not appear in~\eqref{eq:int_bcs}.  
We only consider fields $\bChar{s}_i$ satisfying  $\bChar{n}_i \cdot\bChar{s}_i =0$ on $\partial \Omega_i$, 
for simplicity.  

Also, we introduce a more compact notation that emphasizes the role of interfacial fluxes 
later in coupling method.  Denote these fluxes by $\Fi$, where 
\begin{equation}
    \Fi := b_{i,1} u_1 +b_{i,2} u_2 -g_i, \quad i=1,2.  
    \label{eq:Fi}  
\end{equation} 
The interface conditions are generalized Robin-type boundary conditions.  

Let us illustrate some key technical issues for multirate coupling that we seek 
to address.  To this end, consider time stepping with different step sizes, $\Delta t_i$, for 
each subdomain, $\Omega_i$.  In fact, we shall assume that the discrete time levels coincide at regular 
intervals, for which purpose we reserve the index $\tilde{n}=0,1,\ldots, N$ and 
provide the following definition. 
\begin{defn}[Synchronization times]\label{defn:sync_points} 
Given $N\in\nats$, we set $\Delta t :=\tf/N$ and define {\it synchronization times} in time 
as $t^{\tilde{n}}=\tilde{n}\Delta t$, for $\tilde{n}=0,1,\ldots, N$.  
\end{defn}
The calculations for both subdomains will be coupled together between any two successive synchronization
times, motivating the next definition.  
\begin{defn}[Coupling window]\label{defn:cw}
The interval $[t^{\tilde{n}-1} , t^{\tilde{n}}]$ is referred to as {\it coupling window} $\tilde{n}$, for 
each $\tilde{n}=1,2,\ldots , N$.  
\end{defn}  
The number of substeps taken on $\Omega_i$ over a coupling window is 
denoted by $M_i\in\nats$, which we fix to be independent of the window.  No relationship is 
required between $M_1$ and $M_2$, but we assume $M_i \Delta t_i =\Delta t$ for $i=1,2$.  
A local time level index $n$ (rather than $\tilde{n}$) is used for substeps on a given coupling window.  
\begin{defn}[Time substeps]\label{defn:substeps} 
Relative to coupling window $\tilde{n}$, $1\leq \tilde{n}\leq N$ , we define the local time 
level notation for subdomain $\Omega_i$ as 
$t_i^n = t_i^n (\tilde{n}) :=  t^{\tilde{n}-1} +n\Delta t_i$ for any integer $n$.   
\end{defn}
Note that $t_i^0 = t^{\tilde{n}-1}$ and $t_i^{M_i} = t^{\tilde{n}}$.  

Spatial discretizations and also forcings $f_i$ and $g_i$ may be ignored 
at this point, but for clarity here we consider the specific example of using 
Crank-Nicolson time stepping for both subdomains.   
On coupling window $\tilde{n}$, let $u_i^n \approx u_i (t_i^n)$ for each $n$, with 
$u_i^{n-1/2}:= (u_i^n +u_i^{n-1} )/2$ for compactness of notation.  Each $u_i^n$ is determined 
via 
\begin{equation}
\frac{1}{\Delta t_i} \left( u_i^n - u_i^{n-1} \right) 
= \nabla \cdot \left( \nu_i \nabla u_i^{n-1/2} - \bChar{s}_i u_i^{n-1/2} \right) . 
    \label{eq:CN_example1} 
\end{equation}
Our focus now is on the multirate treatment of the interface conditions~\eqref{eq:int_bcs}.  
These boundary conditions are implemented in the form 
\begin{equation}
    -\nu_i \bChar{n}_n \cdot \nabla u_i^{n-1/2} 
    = \Fi^{n-1/2} , 
    \label{eq:CN_example2}  
\end{equation}
where the numerical flux $\Fi^{n-1/2}$ is some approximation for~\eqref{eq:Fi} 
(with $g_i=0$ for now).  Interestingly, one cannot even write down 
\[
\Fi^{n-1/2} = b_{i,1} u_1^{n-1/2} + b_{i,2} u_2^{n-1/2} \quad \text{(low-order accuracy)} 
\]
because the data $u_1^{n-1/2}$ and $u_2^{n-1/2}$ are not defined at the same times
in the multirate context; indeed, from~\defref{defn:substeps} we have 
\[
t_1^{n-1/2} - t_2^{n-1/2} 
= (n-1/2)\left( \Delta t_1 - \Delta t_2 \right) \neq 0 .  
\] 

\begin{figure}
\centering 
\includegraphics[scale=0.35]{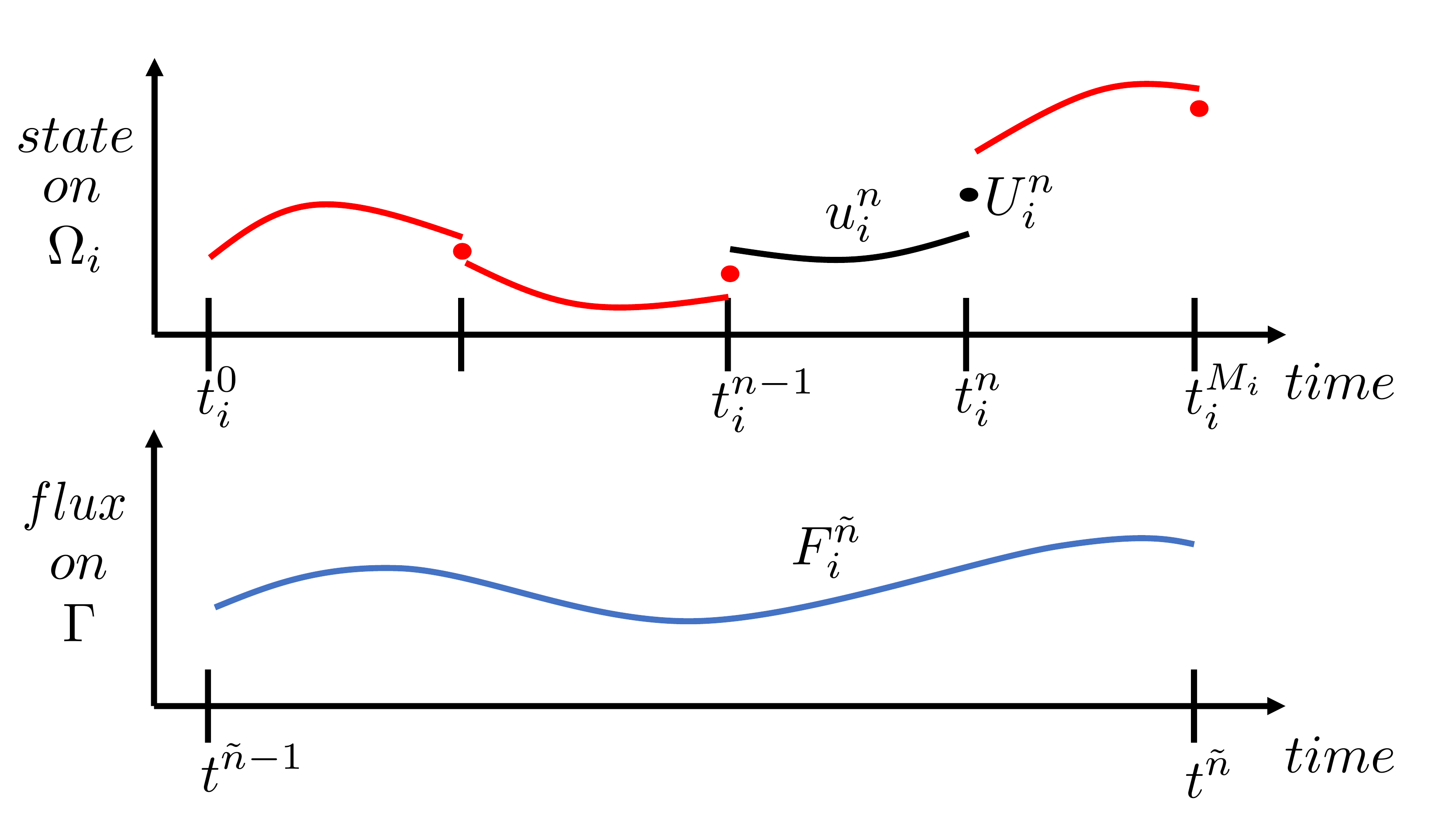} 
\caption{Multirate time levels on a coupling window $[t^{\tilde{n}-1} , t^{\tilde{n}}]$ for the computations on $\Omega_i$ (top), for $i=1,2$.  The states on $\Omega_i$ are approximated as $U_i^n$ at the times $t_i^n$ for $n=0,1,\ldots ,M_i$.  In between times $t_i^{n-1}$ and $t_i^n$ the approximation is a polynomial, $u_i^n$.  This data is determined by the~\DGIT representation of a time integrator.  Traces along $\Gamma$ are projected into a coarse-scale polynomial space 
on the coupling window.  Fluxes are computed by querying a polynomial representation $F_i^{\tilde{n}}$ on the coupling window (bottom).}\label{fig:time_steps}
\end{figure}

An example algorithm within the framework can be demonstrated using the Crank-Nicolson discretization 
in~\eqref{eq:CN_example1}, with falls under the~\DGIT class of methods~\cite{DelDub1986}.  The details of 
the~\DGIT methods are deferred until~\secref{sec:DGITtime}.  We merely wish to spell our the main idea here.
Consider the system consisting of the two subproblems within some coupling window, defined at the times \(t_i^n\) in correspondence to Figure~\ref{fig:time_steps}
\begin{align}
\frac{1}{\Delta t_1} \left( U_1^n - U_1^{n-1} \right) 
= \nabla \cdot \left( \nu_1 \nabla u_1^n \left( t_1^{n-1/2}\right) - \bChar{s}_1 u_1^n \left( t_1^{n-1/2}\right) \right)\;\text{on}\;\Omega_1\;,
    \label{eq:CN_mono1} 
    \\
\frac{1}{\Delta t_2} \left( U_2^n - U_2^{n-1} \right) 
= \nabla \cdot \left( \nu_2 \nabla u_2^n \left( t_2^{n-1/2}\right)  - \bChar{s}_2 u_2^n \left( t_2^{n-1/2}\right) \right)\;\text{on}\;\Omega_2 \;,
    \label{eq:CN_mono2} 
\end{align}
where \(u_i^k (t_i^k) = U_i^k\) is specified for Crank-Nicolson.   The variational structure realized in the DGiT framework (omitted here) leads to a polynomial representation of the solution in time interval \( (t_i^{n-1},t_i^n ) \). In this case, the polynomial form can be represented via the Lagrange interpolant 
\[
u_i^n(t) = U_i^{n-1} \frac{t_i^{n}-t}{\Delta t_i}+U_i^{n}\frac{t-t_i^{n-1}}{\Delta t_i}  \;, \quad 1\leq n \leq M_i, \ i=1,2 .
\]
We now wish to construct a time-continuous polynomial of the flux over the coupling window \([t^{\tilde{n}-1},t^{\tilde{n}}]\) corresponding to Figure~\ref{fig:time_steps}. 

In this paper, we investigate a least-squares approach to compute the fluxes.  In the Crank-Nicolson example, it means that we choose a polynomial order, say $r_i$, to approximate the states on 
the coupling window, and then 
require that  
\begin{equation}
\int_{t^{\tilde{n}-1}}^{t^{\tilde{n}}} F_i^{\tilde{n}}\, p\, dt =\int_{t^{\tilde{n}-1}}^{t^{\tilde{n}}} \left(b_{i,1} u^{\tilde{n}}_{\Gamma ,1} + b_{i,2} u^{\tilde{n}}_{\Gamma ,2}\right) \, p \,dt,  \quad \forall p\in \mathbb{P}_{r_i} \left( t^{\tilde{n}-1} ,t^{\tilde{n}} \right) ,
    \label{eq:Fi_LS_CN}
\end{equation} 
where the auxiliary variables $u^{\tilde{n}}_{\Gamma , i}\in \mathbb{P}_{r_i} \left( t^{\tilde{n}-1} ,t^{\tilde{n}} \right)$ are calculated 
from 
\begin{multline}
\int_{t^{\tilde{n}-1}}^{t^{\tilde{n}}} u^{\tilde{n}}_{\Gamma ,i} \, p \, dt 
= \Delta t_i \sum_{n=1}^{M_i} u_i^n |_{\Gamma} \left( t_i^{n-1/2} \right) \, p\left( t_i^{n-1/2} \right) \\ 
\approx 
\sum_{n=1}^{M_i} \int_{t^{n-1}}^{t^n} u_i^n |_{\Gamma}\, p \, dt ,\quad \forall p\in \mathbb{P}_{r_i} \left( t^{\tilde{n}-1} ,t^{\tilde{n}} \right) . 
    \label{eq:ui_LS_CN}
\end{multline} 
Here, $u_i^n |_{\Gamma}$ is the restriction of $u_i^n$ to $\Gamma$. This condition can be interpreted 
either as a least-squares fit of the piecewise-polynomial flux 
$b_{i,1} u_1^n |_{\Gamma} + b_{i,2} u_2^n |_{\Gamma}$ or, equivalently, as an $L^2$-projection 
into the smoother space of polynomials of order $r_i$ on the coupling window, with an added quadrature approximation.  The reason for the quadrature is explained later, as it is technical, but it is related to the derivation of Crank-Nicolson as a~\DGIT method.

In fact, we show later that this approach naturally enables the weak enforcement of flux conservation, the 
correct interfacial energy dissipation, and high-order consistency, all  
simultaneously.  The $L^2$-projection couples the data for all substeps on a window, 
so they would need to be computed together.  In practice, this might be possible 
using dual Schur-complement technique of the monolithic system~\cite{prakash2004feti,prakash2014computationally,gravouil2015heterogeneous,peterson2019explicit,sockwell2020interface}, that isolate the flux at the interface \(\Gamma\) that can then be utilized to construct a partitioned solve of the two subproblems.  We 
therefore refer to the methods as monolithic.  However, our goals go beyond 
this for future work, where we will discuss partitioned methods.  In this paper, 
we seek only to provide the monolithic framework and analysis of existence 
and convergence (including convergence rates) as a first step, which still illustrates 
the most critical guiding principles.  

Next, we provide some background for interface-coupled problems. The work in this paper is laid out as follows.  In~\secref{sec:semidiscrete}, we 
provide a semidiscrete model, which is referenced afterwards to present and analyze the coupling 
framework.  The \DGIT time-discretization is detailed in~\secref{sec:DGITtime}. The framework itself is presented in~\secref{sec:multirate}.  
The numerical analysis is then given 
in~\secref{sec:analysis}.  The results are summarized 
in~\secref{sec:summary}, along with some discussion of future 
steps for the development of the proposed framework.  


\section{Semi-discrete formulation}\label{sec:semidiscrete}
The~\DGIT framework we study begins with a method-of-lines approach 
to first discretize~\eqref{eq:pdes}-\eqref{eq:ics} in space and 
derive a semi-discrete approximation.  In principle, the choice 
of numerical method for this step is not important, but we will 
apply a standard type of finite element method.  After choosing a 
particular method, we can make some statements about the 
properties of the semi-discrete system to help motivate the 
proposed~\DGIT framework.  

Given that $d=2$ ($d=3$), let $E_{i,j}$ be triangles (tetrahedra) or 
quadrilaterals (parallelipipeds), for $1\leq j \leq N_i$ and $i=1,2$, 
such that the sets $\tau_i=\{ E_{i,j} \}_{j=1}^{N_i}$ are 
conforming, regular meshes on $\Omega_i$.  In the former case, 
denote the spaces of polynomials of order at most $k$ on any real 
set $S\subset\reals^d$ by $\mathbb{P}_{k} (S)$.  Otherwise 
$\mathbb{P}_{k} (S)$ will be tensor-product polynomials with terms 
up to order $k$ in each coordinate.  Given an integer $k\geq 1$, we  
define two (for $i=1,2$) conforming finite element spaces 
\[
{U}_{i} = \left\{ 
v\in \mathcal{C}\left( \overline{\Omega_i}, \reals\right)\, : \, 
v|_{E_{i,j}} \in \mathbb{P}_{k} (E_{i,j}), \ 
\forall E_{i,j}\in \tau_i \ \text{and} \ 
v|_{\Gamma_i} =0
\right\} . 
\]
On $\Gamma$ we assume that the meshes $\tau_i$ coincide and 
form a regular partition of the interface into 
segments ($d=2$) or either triangles or quadrilaterals ($d=3$).  
Denote this partition by $\taugamma=\{ E_{\Gamma ,j} \}_{j=1}^{N_\Gamma}$.  The implied 
finite element ``trace space'' on $\Gamma$ is  
\[
{U}_\Gamma = \left\{ 
\mu \in \mathcal{C}\left( \overline{\Gamma} , \reals\right)\, : \, 
\mu|_{E_{\Gamma ,j}} \in \mathbb{P}_{k} (E_{\Gamma ,j}), \ 
\forall E_{\Gamma ,j}\in \taugamma , 
\ \text{and} \ \mu|_{\partial \Gamma}=0
\right\} .
\]

The dimensions of the spaces ${U}_i$ and ${U}_\Gamma$ are denoted hereafter 
by $d_{\Omega_i}$ and $d_{\Gamma}$, respectively.   
Let $\{ \phi_{i,j} \}_{j=1}^{d_{\Omega_i}}$, $i=1,2$, be nodal  
bases used for the spaces ${U}_i$.  Similarly, 
let $\{ \mu_{j} \}_{j=1}^{d_{\Gamma}}$, be the 
induced nodal basis of the space ${U}_\Gamma$ found by 
taking traces.  Some bilinear forms are required to derive our 
semi-discrete system.  We define for $i=1,2$ 
\begin{align*}
    (v_i , w_i) &:= \int_{\Omega_i} v_i\, w_i\, d\bChar{x} , \quad \forall v_i ,\, w_i\in L^2 (\Omega_i), \\
    a(v_i , w_i) &:= \int_{\Omega_i} \nu_i \nabla v_i\cdot \nabla w_i\, d\bChar{x} , \quad \forall v_i ,\, w_i\in H^1 (\Omega_i), \\ 
    b(v_i , w_i) &:= \int_{\Omega_i} \nabla\cdot (\bChar{s}_i v_i) w_i\, d\bChar{x} , \quad \forall v_i \in H^1 (\Omega_i),\, w_i\in L^2 (\Omega_i).  
\end{align*}
The advection and diffusion terms
are grouped notationally by defining 
\[
L (u_i , v_i):= a (u_i , v_i)+ b(u_i , v_i) . 
\] 
In relation to the interface we define 
\[
(\mu , \lambda )_\Gamma := \int_\Gamma \mu\, \lambda \, d\Gamma , 
\quad \forall \mu ,\, \lambda \in L^2 (\Gamma) . 
\] 
The method would require stabilization for certain advection 
fields, but we do not need to include such cases in this paper.  
Enough strong advection effects can still be studied under the 
stable case that $\nabla \cdot \bChar{s}_i$ is not too large, for 
example.  

In the semi-discrete mixed formulation, we seek to find 
$u_i :[0,\tf]\to U_i$ 
and $F_{i}:(0,\tf]\to U_{\Gamma}$ 
such that 
\begin{align}
    (\dot{u}_i , v_i) &= -L (u_i , v_i) - (F_i,v_i)_\Gamma + (f_i , v_i) , \label{eq:semi_1a} \\ 
&\qquad \qquad \qquad \forall v_i \in U_i,\ 0<t\leq \tf,\ i=1,2, \nonumber \\ 
    (u_i (0), v_i) &= (u_i^0,v_i) ,\ \forall v_i\in U_i, \ i=1,2, \label{eq:semi_1b} 
\end{align} 
with the interface conditions weakly enforced via 
\begin{equation}
(F_i , \mu_i )_\Gamma 
=(b_{i,1} u_1 + b_{i,2} u_2 -g_i , \mu_i )_\Gamma , \ \forall \mu_i \in U_\Gamma , \ i=1,2. 
\label{eq:semi_1d} 
\end{equation}

\subsection{Semi-Discrete Differential Algebraic Equation (DAE) Equivalence}\label{sec:DAEschur}
The semi-discrete problem~\eqref{eq:semi_1a}-\eqref{eq:semi_1d} may be expressed 
equivalently as a system of DAEs, where the 
coupling conditions~\eqref{eq:semi_1d} correspond to the algebraic constraints. The Robin condition in~\eqref{eq:semi_1d} gives rise to an index-1 Hessenberg form DAE~\cite{AscPet1998}, as opposed to the more complex index-2 situation seen in a Lagrange mutlipler setting~\cite{peterson2019explicit}. The DAE is trivial in some sense because it leads directly to an ODE through substitution of the interface condition in the model equations. Although we could work directly with the ODE, retaining the structure of a DAE lends itself to the development of coupling algorithms~\cite{sockwell2020interface}. The algebraic constraints can be eliminated through the enforcement of a Neumann condition, leading to an equivalent set of ODE's with the variational form 
\begin{align*}
    \sum_{i=1,2}(\dot{u}_i , v_i) &= -\sum_{i=1,2} L (u_i , v_i) + \mathcal{F}( (u_1 , u_2) , (v_1 , v_2) )  ,  \\ 
&\qquad \qquad \qquad \forall (v_1, v_2) \in U_1 \times U_2 ,\ 0<t\leq \tf,\, \nonumber \\ 
    (u_i (0), v_i) &= (u_i^0,v_i) ,\ \forall v_i\in U_i, \ i=1,2, 
\end{align*} 
where $\mathcal{F}$ is a forcing term and contains
the fluxes \(F_{i}\) obtained from the algebraic constraint. Under mild 
conditions on the problem data, the well-posedness of the semi-discrete problem follows 
from classical results for ODEs;~\eg~\cite{Cara1927}.  We skip the details, for brevity. This structure can be leveraged to design coupling algorithms through FETI or dual-Schur complement based domain decomposition methods~\cite{farhat1994dual} that construct an interface flux in a manner similar to variational flux recovery~\cite{carey1985approximate}. In a sense, the details of the ODE--DAE equivalence can be exploited to develop coupling algorithms through the implicit function of the algebraic constraint.  In~\cite{prakash2004feti,prakash2014computationally,gravouil2015heterogeneous,peterson2019explicit,sockwell2020interface}, the so-called dual-Schur complement methods that isolates the fluxes \(\Fi\) at the interface \(\Gamma\) leading to ``implicitly-coupled'' algorithms that are consistent with the underlying implicit discretization. We simply wish to note this relation to identify possible solution algorithms to be appliedwith our monolithic framework.

\section{\DGIT Discretization}\label{sec:DGITtime}
In this section we seek only to provide the~\DGIT description for  
the discretization of~\eqref{eq:semi_1a} on a single substep in 
time.  The details of the multirate flux computations over a 
coupling window are left to~\secref{sec:multirate}.  In essence, the \DGIT framework employs a discontinuous Galerkin (DG) method in the one dimension of time. Methods developed in this framework are characterized by the classical constituents of a DG method: a test and trial basis, quadrature, and boundary conditions (referred to as side conditions; see below).  The time 
interval to compute on subdomain $\Omega_i$ for a given 
coupling window is 
\[
I_i^n := \left( t_i^{n-1} , t_i^n \right) . 
\] 
On this interval, the state $u_i$ is approximated as a polynomial 
(in time), denoted by $u_i^n$.  Recall that the finite element basis is $\{ \phi_{i,j} \}_{j=1}^{d_{\Omega_i}}$.  We choose a 
polynomial order $q$ and seek $u_i^n$ in the approximation space 
\begin{equation}
    \mathcal{U}_i^n (q)  := 
    \left\{ 
     v = \sum_{j=1}^{d_{\Omega_i}} v_j (t) \phi_{i,j} (\bChar{x}) \ : \ v_j \in \mathbb{P}_q \left( I_i^n \right), \ 1\leq j \leq d_{\Omega_i} 
    \right\} .
    \label{eq:U_i_n}
\end{equation} 
At the time $t_i^n$, the~\DGIT methods allow the state 
approximation to be discontinuous.  In addition to $u_i^n$, 
a {\it side value} approximation, denoted by $U_i^n$, is also 
computed to represent the state at time $t_i^n$ 
(see~\figref{fig:time_steps}).  

The constraint equations needed to determine $u_i^n$ and 
$U_i^n$ can take two forms.  First, the~\DGIT methods allow 
one to specify $n_s$ linear {\it side conditions} that relate 
values of $u_i^n$ at some chosen points in time to some subset 
of the side values.  Some methods do not require any side conditions, 
in which case $n_s=0$.  If $n_s\geq 1$, we assume 
the side conditions to be specified in the following way.  Let 
$-\infty < \theta_1 < ... < \theta_{n_s} \leq 1$.  In order to 
construct $k_s$-step methods, one reaches back in time for side 
values.  Then the side conditions take the form 
\begin{equation}
    u_i^{n} \left( t_i^{n-1} +\theta_k \Delta t_i \right)  
    =  \sum_{l=1}^{k_s+1} \left(\mathbb{D}\right)_{k,l} U_i^{n+1-l} , \ 
    1\leq k \leq n_s, 
    \label{eq:side_conditions} 
\end{equation} 
where the coefficients $(\mathbb{D})_{k,l}$ form the entries 
of a fixed matrix $\mathbb{D}$ of size $n_s \times (k_s +1)$.  
We note here that although the functions $u_i^n$ are 
defined in $\mathcal{U}_i^n (q)$ with time domain $I_i^n$, the 
side conditions may refer to times outside of this interval.  
Since $u_i^n$ is just a polynomial in time, the domain can be 
extended trivially to $\reals$; this is the meaning of 
$u_i^n (t)$ whenever $t$ is outside of $I_i^n$.  
However, $u_i^n (t) \neq u_i^m (t)$ 
in general if $n\neq m$.  

The remaining constraint equations are variational, and 
weakly enforce~\eqref{eq:semi_1a} in time on the interval 
$I_i^n$.  
These are 
\begin{align}
   \left( U_i^n, v_i^n \left(t_i^n\right) \right) -\int_{I_i^n} \left( u_i^n , \dot{v}^n_i \right) \, dt &= 
   \left( U_i^{n-1}, v_i^n \left(t_i^{n-1}\right) \right) \label{eq:DG_1a} \\ 
   -\int_{I_i^n} L\left( u_i^n, v_i^n \right)  +  \left( F_i^{\tilde{n}} , v_i^n \right)_\Gamma &-( f_i ,v_i^n )\, dt, \ \forall v_i^n \in \mathcal{U}_i^n (q+1-{n_s}). \nonumber 
\end{align}
The maximum order is $q+1-n_s$ for the test functions $v_i^n$ 
in~\eqref{eq:DG_1a} because there are already $n_s$ side 
conditions.  Together, we have $q+2$ implied constraints in 
time to solve for $u_i^n \in\mathcal{U}_i^n (q)$ (requiring 
$q+1$ such conditions) and $U_i^n$ (one more condition).  
The flux function $F_i^{\tilde{n}}$ is determined later, but 
it is a polynomial in time on $I_i^n$.  

Since the side conditions require some previous data,~\ie 
for multistep methods, a starting procedure or initial 
set of data would generally be needed.  We assume that 
data is somehow provided on coupling windows $\tilde{n}$ 
for $1\leq \tilde{n} \leq N_0$, for some appropriate value 
of $N_0$.  Then~\eqref{eq:side_conditions}-\eqref{eq:DG_1a} 
is only solved on windows $\tilde{n}\geq N_0$.  

We continue to illustrate the ideas using the Crank-Nicolson 
method.  As a~\DGIT method, it is actually continuous in 
time because one specifies $n_s=2$ side conditions 
\[
u_i^n \left( t_i^{n-1} \right) = U_i^{n-1} 
\quad \text{and} \quad 
u_i^n \left( t_i^{n} \right) = U_i^{n} .
\]
Thus, we have for each time $t_i^n$ that 
$u_i^{n+1} (t_i^n) = U_i^n = u_i^{n} (t_i^n)$.  Choose $q=1$ 
so that $u_i^n \in \mathcal{U}_i^n (1)$ is a first-order 
polynomial in time.  Then $q+1-n_s=0$, and~\eqref{eq:DG_1a} 
reduces to 
\begin{align}
   \left( U_i^n, v_i^n \right) &= 
   \left( U_i^{n-1}, v_i^n \right) 
   -\Delta t_i L \left( \frac{U_i^{n-1} +U_i^{n}}{2}, v_i^n \right) 
   \label{eq:DG_1a_CN} \\ 
   &-\int_{I_i^n}  \left( F_i^{\tilde{n}} , v_i^n \right)_\Gamma -( f_i ,v_i^n )\, dt, \ \forall v_i^n \in \mathcal{U}_i^n (0). \nonumber 
\end{align}
The remaining integral terms can be approximated using a 
quadrature rule to obtain the classical form of 
Crank-Nicolson.  We leave the method in the above form for 
later reference, and now move on to the details of 
multirate coupling.

\section{The monolithic multirate \DGIT coupling framework}\label{sec:multirate}
In this section, we present the multirate coupling framework that is built upon the \DGIT discretization in~\secref{sec:DGITtime}. We now consider treating both subproblems in \(\Omega_1\) and \(\Omega_2\) with \DGIT and letting the time step sizes vary in each domain. The information required for coupling, contained in the fluxes \(\Fi\), is then provided using a coarse-scale approximation over the coupling windows. We also consider the flux conservation and energy dissipation with the coarse-scale approximation of \(\Fi\). 

The substep computations described in~\secref{sec:DGITtime} are grouped together  
over a given coupling window, say $[t^{\tilde{n}-1} , t^{\tilde{n}}]$.  The 
interface conditions are enforced in a polynomial space 
$\mathbb{P}_{r_i} (I^{\tilde{n}})$ of chosen order $r_i$, where we define 
\[
I^{\tilde{n}} := \left( t^{\tilde{n}-1} , t^{\tilde{n}} \right) 
\]
as the interior of the coupling window because jumps are allowed between 
windows.  In fact, interface conditions need not be specified at the 
synchronization times $t^{\tilde{n}}$, because only the integrated data 
\[
\int_{I_i^n} \left( F_i^{\tilde{n}} , v_i^n \right)_\Gamma \, dt
\] 
appears in the substep calculations when using~\DGIT methods.  

The first step to determine interface conditions is to project the 
interface traces of the states, $u_i^n |_{\Gamma}$ for $1\leq n\leq M_i$, into the 
coarser-scale coupling space.  Precisely, this latter space is 
\begin{equation}
\mathcal{V}^{\tilde{n}} (r_i) := \left\{ 
\lambda = \sum_{j=1}^{d_\Gamma} \lambda_j (t) \mu_j (\bChar{x}) \ : \ 
\lambda_j \in \mathbb{P}_{r_i} (I^{\tilde{n}}),\ 1\leq j \leq d_{\Gamma} 
\right\} , 
    \label{eq:V_i_n} 
\end{equation}
recalling that $\{ \mu_j \}_{j=1}^{d_\Gamma}$ is a basis of the finite element 
trace space on $\Gamma$.  We now define auxiliary variables 
$u_{\Gamma ,i}^{\tilde{n}} \in \mathcal{V}^{\tilde{n}} (r_i)$ as the solutions to 
\begin{equation}
\int_{I^{\tilde{n}}} \left( u_{\Gamma ,i}^{\tilde{n}} ,\lambda^{\tilde{n}} \right)_\Gamma \, dt 
= 
\sum_{n=1}^{M_i} \int_{I_i^n} \left( u_{i}^n  ,\lambda^{\tilde{n}} \right)_\Gamma \, dt ,
\quad \forall \lambda^{\tilde{n}} \in \mathcal{V}^{\tilde{n}} (r_i) .
    \label{eq:DG_1c}  
\end{equation} 
The flux functions on the coupling window are now given by $F_i^{\tilde{n}} \in \mathcal{V}^{\tilde{n}} (r_i)$ 
satisfying 
\begin{equation}
\int_{I^{\tilde{n}}} \left( F_i^{\tilde{n}} ,\lambda^{\tilde{n}} \right)_\Gamma \, dt 
= \int_{I^{\tilde{n}}} \left(  b_{i,1} u_{\Gamma ,1}^{\tilde{n}} + b_{i,2} u_{\Gamma ,2}^{\tilde{n}} -  g_i, \lambda^{\tilde{n}} \right)_\Gamma \, dt, 
\quad \forall \lambda^{\tilde{n}} \in \mathcal{V}^{\tilde{n}} (r_i) . 
    \label{eq:DG_1d} 
\end{equation}

Together with initial conditions, complete monolithic algorithms can now be derived 
by choosing substepping methods, described as~\DGIT methods, and then coupling as 
shown above.  As is typical for~\DGIT derivations, quadrature may need to be applied 
to arrive at certain methods.  We illustrate here by providing a complete, multirate 
Crank-Nicolson scheme.  Notationally, given any data $g^{n-1}$ and $g^n$ we define 
\[
g^{n-1/2} := \frac{1}{2} \left( g^{n-1}+g^{n}\right) ,  
\] 
whereas for functional data of the form $g(t^{n-1})$ and $g(t^n)$ we define 
\[
(g)^{n-1/2} := \frac{1}{2} \left( g (t^{n-1})+g (t^{n}) \right) .
\]
The Crank-Nicolson scheme is then given as follows.  On a coupling window, given $U_i^0$ we compute 
$u_i^n$ for $1\leq n\leq M_i$ and $i=1,2$ from 
\begin{align}
       &\left( U_i^n, v_i^n \right) = 
   \left( U_i^{n-1}, v_i^n \right) 
   -\Delta t_i L \left( U_i^{n-1/2}, v_i^n \right) 
   \label{eq:DG_2a_CN} \\ 
   &\qquad \qquad -\Delta t_i\left(  \left( F_i^{\tilde{n}} \right)^{n-1/2} , v_i^n \right)_\Gamma +\Delta t_i \left( \left( f_i \right)^{n-1/2} ,v_i^n \right), \ \forall v_i^n \in \mathcal{U}_i^n (0), \nonumber \\ 
   &\int_{I^{\tilde{n}}} \left( u_{\Gamma ,i}^{\tilde{n}} ,\lambda^{\tilde{n}} \right)_\Gamma \, dt = 
\Delta t_i \sum_{n=1}^{M_i} \left( U_{i}^{n-1/2}  , \left( \lambda^{\tilde{n}} \right)^{n-1/2}\right)_\Gamma  ,
\quad \forall \lambda^{\tilde{n}} \in \mathcal{V}^{\tilde{n}} (r_i) , \label{eq:DG_2b_CN} \\ 
&\int_{I^{\tilde{n}}} \left( F_i^{\tilde{n}} ,\lambda^{\tilde{n}} \right)_\Gamma \, dt 
= \int_{I^{\tilde{n}}} \left(  b_{i,1} u_{\Gamma ,1}^{\tilde{n}} + b_{i,2} u_{\Gamma ,2}^{\tilde{n}} , \lambda^{\tilde{n}} \right)_\Gamma \, dt \label{eq:DG_2c_CN} \\ 
&\qquad \qquad -\Delta t_i \sum_{n=1}^{M_i} \left( \left( g_{i} \right)^{n-1/2}  , \left( \lambda^{\tilde{n}} \right)^{n-1/2}\right)_\Gamma , 
\quad \forall \lambda^{\tilde{n}} \in \mathcal{V}^{\tilde{n}} (r_i) . \nonumber 
\end{align}
Here, we have applied the identity (see~\secref{sec:DGITtime}) 
\[
\left( u_i^n \right)^{n-1/2} = U_i^{n-1/2} .  
\]
The auxiliary variables $u_{\Gamma ,i}^{\tilde{n}}$ could be algebraically eliminated 
using~\eqref{eq:DG_2b_CN} and inserting into~\eqref{eq:DG_2c_CN}, so these need not be 
explicitly computed and stored.  Here,~\eqref{eq:DG_2a_CN} comes from applying trapezoidal 
quadrature to the integral terms in~\eqref{eq:DG_1a_CN}.  Once the substepping method is 
established, the same quadrature is applied to the right side of~\eqref{eq:DG_1c} to 
arrive at~\eqref{eq:DG_2b_CN}.  The quadrature is applied also for the $g_i$-terms 
in~\eqref{eq:DG_1d} to derive~\eqref{eq:DG_2c_CN}, just for consistency 
with the classical treatment of such data when using Crank-Nicolson.  

In the remainder of this section we proceed to demonstrate that methods derived in the proposed 
framework exhibit the correct numerical mimicry of some properties of the governing system.  
In general, quadrature would need to be applied in a way that preserves the desired 
properties when deriving certain methods.  We will not address quadrature in the remainder 
of this paper, except to illustrate the effects by continuing to use Crank-Nicolson as our 
example algorithm.  

\subsection{Discrete flux conservation}\label{sec:flux_cons}  
\begin{defn}[Strong flux conservation]\label{defn:sfc}  
The model problem~\eqref{eq:pdes}-\eqref{eq:ics} has {\it strong flux conservation} if 
\[
\FOne +\FTwo =0 \quad \text{a.e. on}\ \Gamma \times (0,\tf ] . 
\]
\end{defn} 
Often, this property is found in the form $\FOne=\FTwo$, but in our case the fluxes are oriented 
outward relative to their respective subdomains, hence oppositely oriented along $\Gamma$, so the 
condition becomes $\FOne=-\FTwo$.  Consider that with $f_i=0$ and $g_i=0$, if we 
integrate~\eqref{eq:pdes} over $\Omega_i$, apply Green's theorem and the boundary conditions, 
then sum over $i$, we find that the strong flux conservation property yields 
\[
\frac{d}{dt} \sum_{i=1,2} \int_{\Omega_i} u_i \, d\bChar{x} 
= \sum_{i=1,2} \int_{\Gamma_i} \nu_i \bChar{n}_i \cdot \nabla u_i \, d\Gamma_i 
\]
The time evolution of the quantity $\sum_{i=1,2} \int_{\Omega_i} u_i \, d\bChar{x}$  
should only depend on the fluxes through the external boundaries.  On a coupling 
window, we can equivalently say that 
\[
 \sum_{i=1,2} \int_{\Omega_i} u_i \left( t^{\tilde{n}} \right) \, d\bChar{x} 
= 
\sum_{i=1,2} \int_{\Omega_i} u_i \left( t^{\tilde{n}-1} \right) \, d\bChar{x} 
+\int_{t^{\tilde{n}-1}}^{t^{\tilde{n}}}\sum_{i=1,2} \int_{\Gamma_i} \nu_i \bChar{n}_i \cdot \nabla u_i \, d\Gamma_i \, dt , 
\]
where this latter property still holds with the weak, discrete flux conservation assumption  
\[
\int_{t^{\tilde{n}-1}}^{t^{\tilde{n}}} \int_\Gamma F_1 + F_2 \, d\Gamma \, dt = 0,  
\] 

In the context of the multirate~\DGIT methods, we have the following result.  
\begin{thm}[\DGIT flux conservation]\label{thm:conservation}  
Let ${b}_{1,1}=-{b}_{2,1}$, 
${b}_{1,2}=-{b}_{2,2}$ and ${g}_1=-{g}_2$ (then 
the model problem has strong flux conservation).  Then fluxes computed 
in the multirate~\DGIT framework will satify 
\begin{equation}
    F_1^{\tilde{n}} +F_2^{\tilde{n}} =0 \quad \text{(strong)}, 
    \label{eq:DG_strong_conservation} 
\end{equation}
if $r_1=r_2$.  Otherwise, 
\begin{equation}
    \int_{I^{\tilde{n}}} \left( F_1^{\tilde{n}}+F_2^{\tilde{n}} ,\lambda^{\tilde{n}} \right)_\Gamma \, dt 
= 0 , \quad \forall \lambda^{\tilde{n}} \in \mathcal{V}^{\tilde{n}} (s),
     \quad \text{(weak)} 
    \label{eq:DG_weak_conservation} 
\end{equation} 
where $s=\min \{r_1 ,r_2\}$.  
\end{thm}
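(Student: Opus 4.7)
The plan is to obtain both conclusions directly from the defining variational identity~\eqref{eq:DG_1d} by summing the $i=1$ and $i=2$ equations and exploiting the assumed antisymmetry of the boundary data. Concretely, for each fixed $i$ we have
\[
\int_{I^{\tilde{n}}} \left(F_i^{\tilde{n}},\lambda^{\tilde{n}}\right)_\Gamma dt
=\int_{I^{\tilde{n}}} \left(b_{i,1} u_{\Gamma,1}^{\tilde{n}} + b_{i,2} u_{\Gamma,2}^{\tilde{n}} - g_i,\lambda^{\tilde{n}}\right)_\Gamma dt
\quad\forall \lambda^{\tilde{n}}\in\mathcal{V}^{\tilde{n}}(r_i).
\]
If I can choose a test function $\lambda^{\tilde{n}}$ that is admissible in both equations simultaneously, I can add them; the hypotheses $b_{1,j}=-b_{2,j}$ for $j=1,2$ and $g_1=-g_2$ then make the right-hand side collapse to zero, leaving
\[
\int_{I^{\tilde{n}}}\left(F_1^{\tilde{n}}+F_2^{\tilde{n}},\lambda^{\tilde{n}}\right)_\Gamma dt=0.
\]

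For the weak statement~\eqref{eq:DG_weak_conservation}, the first step is to observe the nesting $\mathcal{V}^{\tilde{n}}(s)\subseteq\mathcal{V}^{\tilde{n}}(r_1)$ and $\mathcal{V}^{\tilde{n}}(s)\subseteq\mathcal{V}^{\tilde{n}}(r_2)$ for $s=\min\{r_1,r_2\}$, which comes from the definition~\eqref{eq:V_i_n} as polynomials of order at most $r_i$ in time with coefficients in the fixed spatial trace space $U_\Gamma$. Hence any $\lambda^{\tilde{n}}\in\mathcal{V}^{\tilde{n}}(s)$ is admissible as a test function in both~\eqref{eq:DG_1d} equations, and the summing-and-cancelling argument above immediately yields~\eqref{eq:DG_weak_conservation}.

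For the strong statement~\eqref{eq:DG_strong_conservation}, when $r_1=r_2=r$ both fluxes live in the same coupling-window space, so $F_1^{\tilde{n}}+F_2^{\tilde{n}}\in\mathcal{V}^{\tilde{n}}(r)$ is itself a legitimate test function. Substituting $\lambda^{\tilde{n}}=F_1^{\tilde{n}}+F_2^{\tilde{n}}$ into the weak identity gives
\[
\int_{I^{\tilde{n}}}\left\|F_1^{\tilde{n}}+F_2^{\tilde{n}}\right\|_{L^2(\Gamma)}^{2}dt=0,
\]
and since the integrand is a nonnegative continuous function on $I^{\tilde{n}}$ it must vanish identically, yielding $F_1^{\tilde{n}}+F_2^{\tilde{n}}\equiv 0$ on $I^{\tilde{n}}\times\Gamma$.

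The argument is essentially algebraic, so there is no real obstacle once the test-space nesting is identified; the only thing to be careful about is that the auxiliary traces $u_{\Gamma,1}^{\tilde{n}}$ and $u_{\Gamma,2}^{\tilde{n}}$ appear in both $i=1$ and $i=2$ equations with the \emph{same} coefficients $b_{i,1}$ and $b_{i,2}$ (indexed by $i$), so the antisymmetry hypothesis on $b_{i,j}$ in the first index cleanly cancels them, and the cancellation does not require anything about the $u_{\Gamma,i}^{\tilde{n}}$'s themselves living in matching spaces.
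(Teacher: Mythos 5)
Your proposal is correct and is exactly the argument the paper intends: the paper's proof consists of the single remark that the result is an immediate consequence of~\eqref{eq:DG_1c}--\eqref{eq:DG_1d}, and your expansion (sum the two instances of~\eqref{eq:DG_1d} over a test function in the nested space $\mathcal{V}^{\tilde{n}}(s)\subseteq\mathcal{V}^{\tilde{n}}(r_1)\cap\mathcal{V}^{\tilde{n}}(r_2)$, cancel via the antisymmetry hypotheses, and in the case $r_1=r_2$ test with $F_1^{\tilde{n}}+F_2^{\tilde{n}}$ itself) fills in precisely the omitted details. No gaps.
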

\begin{proof}
Under the conditions of the theorem, the results are immediate 
consequences of~\eqref{eq:DG_1c}-\eqref{eq:DG_1d}.  
\end{proof} 
We remark that the weak conservation case can actually be reformulated as strong 
conservation for the projection of the fluxes into $\mathcal{V}^{\tilde{n}} (s)$.  

Quadrature may limit the polynomial orders for discrete flux conservation.  
\begin{cor}[Flux conservation, multirate Crank-Nicolson]\label{cor:CN_flux_cons} 
Let $0\leq r_i \leq 1$ for $i=1,2$.  Then under the conditions of~\thmref{thm:conservation}, 
the discrete fluxes $F_i^{\tilde{n}}$ for 
solutions to~\eqref{eq:DG_2a_CN}-\eqref{eq:DG_2c_CN} satisfy 
\[
F_1^{\tilde{n}}+F_2^{\tilde{n}} =0 
\]
if $r_1=r_2$.  Otherwise,  
\[
\sum_{i=1,2}  \left(   \Delta t_i \sum_{n=1}^{M_i} \left( F_i^{\tilde{n}} \right)^{n-1/2} , \lambda^{\tilde{n}} \right)_\Gamma =0, \quad \forall \lambda^{\tilde{n}}  \in \mathcal{V}^{\tilde{n}} (0) .
\]
\end{cor}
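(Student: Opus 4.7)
The plan is to repeat the argument of Theorem 1 inside the Crank-Nicolson setting and then convert the resulting integral identities into the quadrature form appearing in the corollary. The enabling observation is that for $r_i \le 1$, every element of $\mathcal{V}^{\tilde{n}}(r_i)$ is at most affine in $t$ on the whole coupling window $I^{\tilde{n}}$. Since $I^{\tilde{n}}$ is partitioned uniformly into $M_i$ subintervals and trapezoidal quadrature is exact for affine integrands,
\[
\int_{I^{\tilde{n}}} F_i^{\tilde{n}} \, dt \;=\; \Delta t_i \sum_{n=1}^{M_i} (F_i^{\tilde{n}})^{n-1/2},
\]
so one can freely interchange integral and quadrature representations of the flux. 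The same exactness gives an analogous identity after pairing with any $\lambda^{\tilde{n}} \in \mathcal{V}^{\tilde{n}}(0)$ and restricting the integrand to $\Gamma$.

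I would then sum \eqref{eq:DG_2c_CN} over $i = 1,2$ against a test function $\lambda^{\tilde{n}} \in \mathcal{V}^{\tilde{n}}(s)$ with $s = \min\{r_1, r_2\}$, which is admissible on both sides. The hypothesis $b_{1,j}+b_{2,j} = 0$ collapses the $u_{\Gamma,j}^{\tilde{n}}$ contributions identically, and $g_1+g_2 = 0$ is used on the $g_i$-quadrature terms to produce the required cancellation on the common test space. In the strong case $r_1 = r_2 = r$, the common test space is $\mathcal{V}^{\tilde{n}}(r)$, which already contains $F_1^{\tilde{n}} + F_2^{\tilde{n}}$; substituting it yields $\int_{I^{\tilde{n}}} \|F_1^{\tilde{n}} + F_2^{\tilde{n}}\|_{L^2(\Gamma)}^2 \, dt = 0$ and hence the pointwise identity claimed. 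In the weak case, the summed identity holds only for $\lambda^{\tilde{n}} \in \mathcal{V}^{\tilde{n}}(s)$; specializing to $s = 0$ and invoking the trapezoidal-exactness identity above to replace each $\int_{I^{\tilde{n}}} (F_i^{\tilde{n}}, \lambda^{\tilde{n}})_\Gamma\, dt$ by $(\Delta t_i \sum_n (F_i^{\tilde{n}})^{n-1/2}, \lambda^{\tilde{n}})_\Gamma$ gives the stated quadrature identity.

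The main obstacle is verifying that the subdomain-dependent quadrature applied to the $g_i$ terms in \eqref{eq:DG_2c_CN} does not spoil the cancellation. Because the $M_1$- and $M_2$-grid quadratures of $g_1$ and $g_2$ live on different time levels, the condition $g_1 + g_2 = 0$ does not automatically produce identical quadrature contributions on the two sides, and the cancellation has to be engineered through the restriction to an admissible test space. The polynomial bound $r_i \le 1$ in the hypothesis is what reconciles the two quadrature grids with the affine structure of the fluxes and test functions, so that the multirate quadrature mismatch pairs cleanly with the admissible $\lambda^{\tilde{n}}$ and the asserted conservation identities survive; once this compatibility is established, the rest of the argument is a routine reprise of the proof of Theorem 1.
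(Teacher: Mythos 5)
Your core mechanism is the one the paper uses. The paper's proof of~\corref{cor:CN_flux_cons} is a single sentence: the corollary follows from~\thmref{thm:conservation} because ``the quadrature for time integration of the relevant terms is exact when $r_i\leq 1$.'' Your reprise of the Theorem~\ref{thm:conservation} argument --- sum~\eqref{eq:DG_2c_CN} over $i$, cancel the $b$-terms via $b_{1,j}=-b_{2,j}$, test with $F_1^{\tilde{n}}+F_2^{\tilde{n}}$ in the strong case, and in the weak case use the exactness of the composite trapezoidal sum $\Delta t_i\sum_{n}\bigl(  (F_i^{\tilde{n}})^{n-1/2},(\lambda^{\tilde{n}})^{n-1/2}\bigr)_\Gamma$ for the affine integrand $(F_i^{\tilde{n}},\lambda^{\tilde{n}})_\Gamma$ to rewrite the integral identity in quadrature form --- is exactly that argument, and this part is correct.

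The difficulty is your final paragraph. You correctly spot that the $g_i$-terms in~\eqref{eq:DG_2c_CN} are composite quadratures on two \emph{different} grids ($M_1$ versus $M_2$ substeps), so $g_1+g_2=0$ does not by itself make them cancel; but you then assert that the bound $r_i\leq 1$ and the restriction to an admissible test space reconcile the two grids. They do not. The mismatch sits in the data $g_i$, not in the test function: even for a constant $\lambda^{\tilde{n}}$ the two sums $\Delta t_1\sum_{n=1}^{M_1}(g_1)^{n-1/2}$ and $\Delta t_2\sum_{n=1}^{M_2}(g_1)^{n-1/2}$ are two different composite trapezoid approximations of $\int_{I^{\tilde{n}}}g_1\,dt$ and differ by their grid-dependent quadrature errors whenever $g_1$ is not affine with respect to both grids (take $g_1=t^2$ on $[0,1]$ with $M_1=2$, $M_2=3$: the sums are $3/8$ and $19/54$). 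No choice of $\lambda^{\tilde{n}}\in\mathcal{V}^{\tilde{n}}(0)$ repairs this, so the asserted identities do not ``survive'' by the mechanism you describe. What actually closes the argument is an assumption making both grid-quadratures of $(g_1,\lambda^{\tilde{n}})_\Gamma$ exact and hence equal to the common integral --- e.g.\ $g_i\equiv 0$ (the setting of the energy results), or $g_i$ constant in time on each window; this is the implicit content of the paper's ``exactness'' claim, which is itself silent on the point you noticed. In short: right approach, and a sharper observation than the paper records, but the proposed resolution of that observation is an assertion rather than a proof, and as stated it is false.
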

\begin{proof}
An immediate consequence of~\thmref{thm:conservation}, since 
the quadrature for time integration of the relevant terms is exact when $r_i\leq 1$.  
\end{proof}

\subsection{Interfacial energy dissipation}\label{sec:Gamma_E} 
The model interfacial energy is also important.  Coupling methods 
can easily introduce numerical instabilities or more subtle, artificial 
model sensitivities if this behavior is not correct.  
\begin{defn}[Interfacial energy dissipation]\label{defn:interface_energy}  
Assume that the matrix of coupling coefficients 
\[
\mathbb{B} := 
\left[ 
\begin{array}{cc} 
b_{1,1} & b_{1,2} \\ 
b_{2,1} & b_{2,2} 
\end{array} 
\right]
\]
is positive semi-definite.  In the absence of interface forcings, $g_i=0$, the {\it interfacial energy} satisfies 
\[
-\sum_{i=1,2} \Fi  u_i\,  \leq 0 \quad \text{on} \quad \Gamma\times (0,\tf ] . 
\]
\end{defn} 
One multiplies through~\eqref{eq:pdes} by $u_i$ then uses Green's theorem and applies the interface conditions 
to verify the interfacial energy property.  Let the solution energy for $u_i$ be denoted by  
\[
\| u_i \| := \sqrt{\int_{\Omega_i} |u_i|^2 \, d\bChar{x}} . 
\]
With $f_i=0$ and $g_i=0$, the monolithic energy equation is 
\begin{equation}
   \frac{d}{dt} \sum_{i=1,2} \frac{1}{2} \|u_i \|^2 
   = -\sum_{i=1,2} L(u_i , u_i) 
   -\sum_{i=1,2} \int_\Gamma \Fi  u_i\, d\Gamma . 
    \label{eq:energy_equation}
\end{equation} 
Given that the net effect of advection and diffusion terms is dissipative (\eg diffusion-dominated 
or if $\nabla \cdot \bChar{s}_i=0$), then $-L(u_i , u_i)\leq 0$.  
It follows that for a positive semi-definite choice of $\mathbb{B}$, 
\begin{equation*}
\left\{
\begin{aligned}
   \frac{d}{dt} \sum_{i=1,2} \frac{1}{2} \|u_i \|^2 
    = -\sum_{i=1,2} L(u_i , u_i) - \int_\Gamma \bChar{u}^\top \mathbb{B} \bChar{u} \, d\Gamma \leq 0 , \\ 
    \bChar{u} := \left[ 
\begin{array}{c} 
u_{1}  \\ 
u_{2}  
\end{array} 
\right] .
    \end{aligned}\right.
\end{equation*}
As we mentioned for the flux conservation property, the weakened interfacial energy condition 
\[
-\sum_{i=1,2} \int_{I^{\tilde{n}}} ( \Fi , u_i )_\Gamma \, dt \leq 0 
\] 
will suffice to ensure the discrete property 
\[
\sum_{i=1,2} \|u_i (t^{\tilde{n}}) \| \leq \sum_{i=1,2} \|u_i (t^{\tilde{n}-1}) \| .  
\]

\begin{thm}[Multirate interfacial energy]\label{thm:DGIT_energy} 
Assume that $\mathbb{B}$ is positive semi-definite.    
Then solutions to~\eqref{eq:side_conditions}-\eqref{eq:DG_1a},~\eqref{eq:DG_1c}-\eqref{eq:DG_1d} for 
$g_1=0=g_2$ satisfy 
\begin{equation}
    \underset{i=1,2}{\sum} -\sum_{n=1}^{M_i} \int_{I_i^n}\left( F_i^{\tilde{n}} ,u_i^n \right)_\Gamma \, dt \leq 0 .
    \label{eq:DGIT_energy} 
\end{equation} 
\end{thm}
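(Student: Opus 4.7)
The plan is to leverage the variational projection equations~\eqref{eq:DG_1c} and~\eqref{eq:DG_1d} by choosing specific test functions that collapse the fine-scale double sum into a coarse-scale integral over the coupling window, ultimately exposing a quadratic form governed by $\mathbb{B}$ whose sign is fixed by the positive semi-definiteness hypothesis.

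First, I would observe the pivotal fact that both $F_i^{\tilde{n}}$ and $u_{\Gamma,i}^{\tilde{n}}$ live in $\mathcal{V}^{\tilde{n}}(r_i)$, so each is an admissible test function in the equation defining the other. Substituting $\lambda^{\tilde{n}} = F_i^{\tilde{n}}$ into the projection equation~\eqref{eq:DG_1c} immediately converts the sum of substep integrals into a single coarse-scale pairing:
\[
\sum_{n=1}^{M_i} \int_{I_i^n} \left( u_i^n , F_i^{\tilde{n}} \right)_\Gamma \, dt
= \int_{I^{\tilde{n}}} \left( u_{\Gamma ,i}^{\tilde{n}} , F_i^{\tilde{n}} \right)_\Gamma \, dt.
\]
This step is the workhorse: it is precisely why the least-squares projection was built to accept polynomial test functions up to order $r_i$ on the whole window.

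Next, I would substitute $\lambda^{\tilde{n}} = u_{\Gamma ,i}^{\tilde{n}}$ into~\eqref{eq:DG_1d} with $g_i=0$, obtaining
\[
\int_{I^{\tilde{n}}} \left( F_i^{\tilde{n}} , u_{\Gamma ,i}^{\tilde{n}} \right)_\Gamma \, dt
= \int_{I^{\tilde{n}}} \left( b_{i,1} u_{\Gamma ,1}^{\tilde{n}} + b_{i,2} u_{\Gamma ,2}^{\tilde{n}} , u_{\Gamma ,i}^{\tilde{n}} \right)_\Gamma \, dt.
\]
Chaining these two identities and summing over $i=1,2$, the right-hand side reorganizes exactly into the quadratic form
\[
\sum_{i=1,2} \sum_{n=1}^{M_i} \int_{I_i^n} \left( F_i^{\tilde{n}} , u_i^n \right)_\Gamma \, dt
= \int_{I^{\tilde{n}}} \int_\Gamma \bChar{u}_\Gamma^\top \mathbb{B} \bChar{u}_\Gamma \, d\Gamma \, dt,
\quad \bChar{u}_\Gamma := \left[ \begin{array}{c} u_{\Gamma ,1}^{\tilde{n}} \\ u_{\Gamma ,2}^{\tilde{n}} \end{array} \right].
\]
Positive semi-definiteness of $\mathbb{B}$ renders the integrand nonnegative pointwise, so the sum is nonnegative, and negating delivers~\eqref{eq:DGIT_energy}.

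The proof has essentially no hard analytic step; the whole content is the double test-function trick, which is clean only because the interface quantities on both sides of~\eqref{eq:DG_1c}--\eqref{eq:DG_1d} share the coupling-window polynomial space $\mathcal{V}^{\tilde{n}}(r_i)$. The only subtlety worth checking is that the two spaces $\mathcal{V}^{\tilde{n}}(r_1)$ and $\mathcal{V}^{\tilde{n}}(r_2)$ need not coincide when $r_1 \neq r_2$, but this causes no difficulty: each application of~\eqref{eq:DG_1c} or~\eqref{eq:DG_1d} uses a test function from the correct space, and the final sum is simply a pointwise quadratic form in the pair $(u_{\Gamma ,1}^{\tilde{n}}, u_{\Gamma ,2}^{\tilde{n}})$ with no cross-space compatibility required.
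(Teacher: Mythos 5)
Your proof is correct and follows essentially the same route as the paper: insert $\lambda^{\tilde{n}}=F_i^{\tilde{n}}$ into~\eqref{eq:DG_1c}, then $\lambda^{\tilde{n}}=u_{\Gamma ,i}^{\tilde{n}}$ into~\eqref{eq:DG_1d}, and recognize the resulting sum as the quadratic form in $\mathbb{B}$. Your added remark about admissibility of the test functions when $r_1\neq r_2$ is a correct observation that the paper leaves implicit.
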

\begin{proof}
Insert $\lambda^{\tilde{n}}=F_i^{\tilde{n}}$ in~\eqref{eq:DG_1c} to reduce 
\begin{equation*}
\underset{i=1,2}{\sum} -\sum_{n=1}^{M_i} \int_{I_i^n}\left( F_i^{\tilde{n}} ,u_i^n \right)_\Gamma \, dt
=
\underset{i=1,2}{\sum} -\int_{I^{\tilde{n}}} \left( F_i^{\tilde{n}} ,u_{\Gamma ,i}^{\tilde{n}} \right)_\Gamma \, dt  .
\end{equation*} 
In turn, choose $\lambda^{\tilde{n}}=u_{\Gamma ,i}^{\tilde{n}}$ in~\eqref{eq:DG_1d}, showing that (with 
$g_i=0$) 
\begin{multline*}
\underset{i=1,2}{\sum} -\int_{I^{\tilde{n}}} \left( F_i^{\tilde{n}} ,u_{\Gamma ,i}^{\tilde{n}} \right)_\Gamma \, dt  
   = \underset{i=1,2}{\sum} -\int_{I^{\tilde{n}}} \left( F_i^{\tilde{n}} , b_{i,1} u_{\Gamma ,1}^{\tilde{n}} +b_{i,2} u_{\Gamma ,2}^{\tilde{n}}\right)_\Gamma \, dt \\
    =-\int_{I^{\tilde{n}}}  \int_\Gamma [u_{\Gamma ,1}^{\tilde{n}} \ u_{\Gamma ,2}^{\tilde{n}}]\mathbb{B}\left[ \begin{array}{c} 
    u_{\Gamma ,1}^{\tilde{n}} \\ 
    u_{\Gamma ,2}^{\tilde{n}} 
    \end{array}\right] \, d\Gamma \, dt \leq 0.
\end{multline*}
\end{proof}

The classical energy analysis for Crank-Nicolson begins by selecting 
$v_i^n = U_i^{n-1/2}$ in~\eqref{eq:DG_2a_CN}, motivating the following result 
for the coupling terms.  
\begin{cor}[Interfacial energy, multirate Crank-Nicolson]\label{cor:CN_GammaE} 
Under the conditions of~\thmref{thm:DGIT_energy}, 
solutions to~\eqref{eq:DG_2a_CN}-\eqref{eq:DG_2c_CN} satisfy 
\[
-\sum_{i=1,2} \Delta t_i \left(  \left( F_i^{\tilde{n}} \right)^{n-1/2} , U_i^{n-1/2} \right)_\Gamma \leq 0 .
\]
\end{cor}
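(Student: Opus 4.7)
The plan is to mimic the proof of \thmref{thm:DGIT_energy} almost verbatim, but working at the level of the quadrature-based definitions \eqref{eq:DG_2b_CN}-\eqref{eq:DG_2c_CN} rather than the continuous-in-time integrals \eqref{eq:DG_1c}-\eqref{eq:DG_1d}. I read the corollary's left-hand side as summed over $n=1,\ldots,M_i$ (since this is the quantity that arises from applying trapezoidal quadrature to $-\sum_i \sum_n \int_{I_i^n} (F_i^{\tilde n},u_i^n)_\Gamma \,dt$ in the Crank-Nicolson derivation). With that understanding, the point is that the two test function substitutions used in \thmref{thm:DGIT_energy} both remain legal: $F_i^{\tilde n}$ and $u_{\Gamma,i}^{\tilde n}$ lie in $\mathcal{V}^{\tilde n}(r_i)$ by construction.

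First, I would insert the test function $\lambda^{\tilde n} = F_i^{\tilde n}\in \mathcal V^{\tilde n}(r_i)$ into \eqref{eq:DG_2b_CN}. This converts the quadrature sum
\[
\Delta t_i \sum_{n=1}^{M_i}\bigl(U_i^{n-1/2},\bigl(F_i^{\tilde n}\bigr)^{n-1/2}\bigr)_\Gamma
= \int_{I^{\tilde n}} \bigl(u_{\Gamma,i}^{\tilde n}, F_i^{\tilde n}\bigr)_\Gamma \,dt,
\]
so the quantity to bound becomes $-\sum_{i=1,2}\int_{I^{\tilde n}}(F_i^{\tilde n},u_{\Gamma,i}^{\tilde n})_\Gamma\,dt$, which is exactly the expression appearing midway through the proof of \thmref{thm:DGIT_energy}.

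Next, with $g_i=0$, I would choose $\lambda^{\tilde n} = u_{\Gamma,i}^{\tilde n}\in \mathcal V^{\tilde n}(r_i)$ in \eqref{eq:DG_2c_CN}, yielding
\[
\int_{I^{\tilde n}} \bigl(F_i^{\tilde n},u_{\Gamma,i}^{\tilde n}\bigr)_\Gamma\,dt
= \int_{I^{\tilde n}} \bigl(b_{i,1}u_{\Gamma,1}^{\tilde n}+b_{i,2}u_{\Gamma,2}^{\tilde n}, u_{\Gamma,i}^{\tilde n}\bigr)_\Gamma\,dt.
\]
Summing over $i=1,2$ and collecting into matrix form gives $\int_{I^{\tilde n}}\int_\Gamma \bChar{u}_\Gamma^\top \mathbb{B}\,\bChar{u}_\Gamma\,d\Gamma\,dt$ with $\bChar{u}_\Gamma=[u_{\Gamma,1}^{\tilde n},u_{\Gamma,2}^{\tilde n}]^\top$. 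Positive semi-definiteness of $\mathbb B$ makes this nonnegative, so its negative is $\leq 0$, completing the chain.

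The main obstacle, if any, is purely bookkeeping: making sure that the quadrature identity \eqref{eq:DG_2b_CN} can be applied with $\lambda^{\tilde n}=F_i^{\tilde n}$ even though the restriction $r_i\leq 1$ of \corref{cor:CN_flux_cons} is not assumed here. It can, because \eqref{eq:DG_2b_CN} is the \emph{defining equation} for $u_{\Gamma,i}^{\tilde n}$ in the method, valid for every $\lambda^{\tilde n}\in \mathcal V^{\tilde n}(r_i)$ and independent of whether the trapezoidal quadrature is exact. Consequently the Crank-Nicolson case inherits interfacial dissipation directly from \thmref{thm:DGIT_energy} without further restriction on the coupling polynomial order.
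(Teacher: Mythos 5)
Your proof is correct and follows exactly the paper's argument: substitute $\lambda^{\tilde{n}}=F_i^{\tilde{n}}$ in~\eqref{eq:DG_2b_CN} and $\lambda^{\tilde{n}}=u_{\Gamma ,i}^{\tilde{n}}$ in~\eqref{eq:DG_2c_CN}, then conclude via the positive semi-definiteness of $\mathbb{B}$ as in~\thmref{thm:DGIT_energy}. Your reading of the left-hand side as implicitly summed over $n=1,\ldots,M_i$, and your remark that the defining equation~\eqref{eq:DG_2b_CN} holds for all $\lambda^{\tilde{n}}\in\mathcal{V}^{\tilde{n}}(r_i)$ regardless of quadrature exactness, are both consistent with the paper's (much terser) proof.
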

\begin{proof}
Select $\lambda^{\tilde{n}} =F_i^{\tilde{n}}$ in~\eqref{eq:DG_2b_CN} and 
$\lambda^{\tilde{n}} =u_{\Gamma ,i}^{\tilde{n}}$ in~\eqref{eq:DG_2c_CN}.  The 
result follows as shown in the proof of~\thmref{thm:DGIT_energy}.
\end{proof}
In the case that the bilinear form $L(\cdot , \cdot)$ is coercive, and with standard 
assumptions on the boundedness of the problem data,~\corref{cor:CN_GammaE} also 
implies existence of solutions and unconditional stability for the multirate 
Crank-Nicolson method.  
The analysis is standard, once the energy equations are summed over the coupling 
window, and we skip the details for brevity.   

\section{Analysis of the monolithic framework}\label{sec:analysis} 
The analysis is presented for the general multirate~\DGIT
framework to show that existence and convergence (as time 
step sizes decrease) will hold for 
all methods derived using the proposed methodology.  However, this level of generality 
includes methods with time step restrictions, which 
therefore appear in the analysis.  The properties of 
specific methods derived through the proposed framework will 
be affected by choices of side conditions and polynomial 
basis, for example, as well as any additional quadrature 
approximations for the integral terms.  Quadrature is 
not addressed here, for brevity.  As shown for the 
multirate Crank-Nicolson example earlier, an individual 
analysis for a particular algorithm can verify stronger 
properties, such as removing time step restrictions.  

\subsection{Mathematical preliminaries}\label{sec:prelims} 
The finite element spaces satisfy $U_i \subset H^1 (\Omega_i)$, 
$i=1,2$, and 
$U_\Gamma \subset H^{1/2}_{00}(\Gamma) \subset L^2 (\Gamma)$, where 
$H^{1/2}_{00}(\Gamma)$ is the classical Lions-Magenes trace space along 
$\Gamma$~\cite{LM1972}.  In~\secref{sec:semidiscrete}, some  
bilinear forms were introduced that are also the inner-products 
for $L^2 (\Omega_i)$ and $L^2 (\Gamma)$; these induce the 
respective norms $\| u_i\| :=(u_i , u_i)^{1/2}$, $i=1,2$, and 
$\| \mu \|_\Gamma := (\mu , \mu )_\Gamma^{1/2}$.  The inner-products 
on the spaces $H^k (\Omega_i)$ are denoted by $(u_i , u_i)_k$ 
for $k\in\nats$, with norm $\|u_i\|_k :=(u_i , u_i)_k^{1/2}$.  
The Euclidean norm for vectors (or absolute value, for scalars) is 
denoted by $|\cdot |$. 

We assume the following coercivity and continuity properties: there exist positive 
constants $L_{i,1}$ and $L_{i,2}$ such that 
\begin{equation}
L_{i,1} \| u_i \|_1^2 \leq L(u_i , u_i)  
\leq L_{i,2} \| u_i \|_1^2 ,\quad \forall u_i \in U_i , \ i=1,2.
    \label{eq:coercivity} 
\end{equation} 
This can be shown to hold, for example, if $\bChar{s}_i\in\mathcal{C}^1 (\overline{\Omega}_i ;\reals^d)$ with 
$\nabla \cdot \bChar{s}_i$ sufficiently small by applying 
standard integral identities (recall that we assume 
$\bChar{s}_i \cdot \bChar{n}_i=0$ on $\partial \Omega_i$) and a Poincar\`e 
inequality.  See,~\eg,~\cite{BS2008,Ciarlet2002} for details.  

Let $D^{(k)}v:= \frac{d^k v}{dt^k}$.  
We introduce the spaces 
\begin{align*}
    \mathcal{H}_i^{k} =  \mathcal{H}_i^{k} (n)  &:= \left\{ v_i^n (\bChar{x},t)
\, : \, 
     \| D^{(m)}v_i^n \| \in L^2 \left({I_i^{n}}\right), 
 \, 0,\leq m\leq k \right\} \\ 
    \mathcal{H}_\Gamma^k = \mathcal{H}_\Gamma^k ({\tilde{n}}) &:= \left\{ \mu^{\tilde{n}} (\bChar{x},t)
\, : \, 
     \| D^{(m)}\mu^{\tilde{n}} \|_\Gamma \in L^2 \left({I^{\tilde{n}}}\right), 
 \, 0,\leq m\leq k 
    \right\} .
\end{align*}
In correspondence, we define space-time norms 
\begin{equation*}
    \begin{aligned}
    \llVert v_i^n \rrVert_k &:= \sqrt{\int_{I_i^n} \sum_{m=0}^k \| D^{(m)}v_i^n \|^2 \, dt}, \quad \forall v_i^n \in \mathcal{H}_i^{k} , \\ 
        \llVert \mu^{\tilde{n}} \rrVert_{\Gamma ,k} &:= \sqrt{\int_{I^{\tilde{n}}} \sum_{m=0}^k \| D^{(m)}\mu^{\tilde{n}} \|^2 \, dt}, \quad \forall \mu^{\tilde{n}} \in \mathcal{H}_\Gamma^{k} .
    \end{aligned}
\end{equation*} 
We will suppress the $k$-subscript in case $k=0$ for these norms.  

\begin{defn}\label{defn:L2_projection}
Let $-\infty <a<b<\infty$ and set $S:=(a,b)$.  The
$L^2$-projector from
$L^2 (S)$ onto $\mathbb{P}_k (S)$ is denoted by
$\mathcal{P}_S^k$.
\end{defn}
\begin{defn}
Let a function $v_i$, or $\lambda$, have the form 
\[
v_i = \sum_{j=1}^{d_{\Omega_i}} v_{i,j}(t) \phi_{i,j}(\bChar{x}), 
\quad \text{or} \quad 
\lambda = \sum_{j=1}^{d_\Gamma} \lambda_j (t) \mu_j (\bChar{x})  , 
\]
where $v_{i,j}\in L^2 (a,b)$, $1\leq j\leq d_{\Omega_i}$, or 
$\lambda_j \in L^2 (a,b)$, $1\leq j\leq d_\Gamma$.  Given 
$S\subset (a,b)$ measurable, we
extend the definition of $\mathcal{P}_S^k$ as follows:
\[
\mathcal{P}_S^k v_i := \sum_{j=1}^{d_{\Omega_i}} \mathcal{P}_S^k \left[ v_{ij}(t)|_S\right] \phi_{ij}(\bChar{x}), \quad \text{or} \quad 
\mathcal{P}_S^k \lambda := \sum_{j=1}^{d_{\Gamma}} \mathcal{P}_S^k \left[ \lambda_{j}(t)|_S\right] \mu_{j}(\bChar{x}) .
\]
\end{defn} 

The next definition is used later in~\lemref{lem:side_conditions} to show that, 
if reasonable side conditions are specified, then polynomials in $\mathbb{P}_q$ can 
be characterized by their projections into $\mathbb{P}_{q-n_s}$ 
plus the side conditions.  This is a key concept for~\DGIT analysis.  
\begin{defn}\label{defn:isomorphism}
Let $n_s$ satisfy $1\leq n_s \leq q+1$, and let strictly
increasing values $\theta_k$ be given for $1\leq k\leq n_s$ with
$-\infty < \theta_1 < \ldots  < \theta_{n_s} \leq 1$.
Denote by $\mathcal{W}_i$ the $n_s$-times product space
\[
\mathcal{W}_i :=U_i \times \ldots \times U_i .  
\]
Denote the times for the side conditions 
(see~\eqref{eq:side_conditions}) by 
$t_i^{n,k} := t_i^{n-1} +\theta_k \Delta t_i$, for 
$1\leq k\leq n_s$.  
We define mappings
$\mathcal{J}_i^{n}: \mathcal{U}_i^n (q)\to \mathcal{U}_i^n (q-n_s)\times \mathcal{W}_i$
for $1\leq n_s <q+1$ by 
\begin{equation*}
\mathcal{J}_i^{n} (v_i^n) := 
\left(\mathcal{P}_{I_i^{n}}^{q-n_s}(v_i^n) , v_i^n |_{t={t_{i}^{n,1}}} , \ldots , v_i^n |_{t={t_{i}^{n,n_s}}} \right), \ 
\forall v_i^n \in \mathcal{U}_i^n (q) .
\end{equation*}
In case $n_s=q+1$,  
$\mathcal{J}_i^{n}: \mathcal{U}_i^n (q) \to \mathcal{W}_i$ and
\begin{equation*} 
\mathcal{J}_i^{n} (v_i^n) := 
( v_i^n |_{t={t_{i}^{n,1}}}, \ldots , v_i^n |_{t={t_{i}^{n,n_s}}} ), \ 
\forall v_i^n \in  \mathcal{U}_i^n (q).  
\end{equation*}
Furthermore, we define the following norm on $\mathcal{U}_i^n (q)$:
\begin{equation*} 
\llVert \mathcal{J}_i^{n} (v_i^n) \rrVert := 
\sqrt{ \llVert \mathcal{P}_{I_i^{n}}^{q-n_s}(v_i^n) \rrVert^2 + \Delta t_i \sum_{k=1}^{n_s} \left\| v_i^n |_{t=t_{i}^{n,k}} \right\|^2} .
\end{equation*}
In case $n_s=q+1$, define $\mathcal{P}_{I_i^{n}}^{-1}:=\{\bChar{0}\}$.
\end{defn}

\subsection{Existence of solutions}\label{sec:existence} 
The next lemmas are used to identify viable choices of
side conditions and to incorporate them into the analyses of
existence and uniqueness.  They are modest extensions of some
results
in~\cite{DelDub1986} to allow for values $\theta_j$ inside the
range $(0,1)$, though we make no claim of novelty.
\begin{lem}\label{lem:side_conditions}
Let $n_s$ satisfy $1\leq n_s \leq q+1$, and let strictly
increasing values $\theta_j$ be given for $1\leq j\leq n_s$ with
$-\infty < \theta_1 < \ldots  < \theta_{n_s} \leq 1$.  Denote by
$\{ \psi_j\}_{j=0}^q$ a set of
polynomials such that $\psi_j \in \mathbb{P}_j (\reals)$ for
$0\leq j\leq q$, and their restrictions
$\hat{\psi}_j:= \psi_j |_{[-1,1]}$ form an $L^2$-orthogonal basis of
$\mathbb{P}_q [-1,1]$.  Define $\tilde{\mathbb{D}}$ to be the matrix
of size $n_s\times n_s$ with entries
\begin{equation} 
( \tilde{\mathbb{D}})_{j,k} := \psi_{m_k} (2\theta_j-1), \quad m_k := k+q-n_s,\quad 1\leq k\leq n_s . 
\label{eq:D_tilde} 
\end{equation}
Then the mappings $\mathcal{J}_i^{n}$ are bijections if and only
if $\tilde{\mathbb{D}}$ is nonsingular.  If $\tilde{\mathbb{D}}$ is
nonsingular, there exist constants $C_1>0$ and $C_2>0$ independent
of $\Delta t_i$, $d_{\Omega_i}$ and $v_i\in \mathcal{U}_i^n (q)$ such that
\begin{align}
C_1 \llVert v_i \rrVert
&\leq \llVert \mathcal{J}_i^n (v_i) \rrVert
\leq C_2 \llVert v_i \rrVert \label{eq:J_trip_norm_equiv}
\end{align}
\end{lem}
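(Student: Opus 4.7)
The plan is to reduce both the bijectivity and the norm-equivalence claims to purely scalar statements on the reference interval $[-1,1]$, where they become finite-dimensional linear algebra, and then restore the scaling. First, writing $v_i^n(\bChar{x},t)=\sum_{j=1}^{d_{\Omega_i}} v_{i,j}(t)\phi_{i,j}(\bChar{x})$ and letting $M$ be the mass matrix with entries $(\phi_{i,j},\phi_{i,k})$, every ingredient of $\mathcal{J}_i^n$ and of the norms involved (the space-time $L^2(\Omega_i)$ integral, the time-wise $L^2$-projection, and the pointwise-in-time $L^2(\Omega_i)$ evaluations) acts on the coefficient vector $\mathbf{v}(t)=(v_{i,1},\ldots,v_{i,d_{\Omega_i}})^\top$ only through $M$ and through operations on the time variable, never through any spatial differentiation. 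Taking a Cholesky factorization $M=LL^\top$ and substituting $\mathbf{w}=L^\top\mathbf{v}$ decouples the claim into identical scalar statements for each component $w_j\in\mathbb{P}_q(I_i^n)$; this is why the eventual constants will be independent of $d_{\Omega_i}$.

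Next, I would affine-transform $I_i^n$ onto $[-1,1]$ via $\tau=2(t-t_i^{n-1})/\Delta t_i - 1$, under which the side-condition nodes $t_i^{n,k}$ map to $2\theta_k-1$. Expanding a scalar polynomial $p(\tau)=\sum_{l=0}^q a_l \hat{\psi}_l(\tau)$ in the orthogonal basis of $\mathbb{P}_q[-1,1]$, the $L^2$-projection $\mathcal{P}_{[-1,1]}^{q-n_s}(p)$ is exactly the truncation $\sum_{l=0}^{q-n_s} a_l\hat{\psi}_l$, so it determines and is determined by the low-order coefficients $a_0,\ldots,a_{q-n_s}$. Subtracting this known part, the side conditions become the $n_s\times n_s$ linear system
\[
\sum_{k=1}^{n_s}\psi_{m_k}(2\theta_j-1)\,a_{m_k}
\;=\; p(2\theta_j-1) \;-\; \sum_{l=0}^{q-n_s}a_l\hat{\psi}_l(2\theta_j-1),
\qquad 1\leq j\leq n_s,
\]
for the remaining coefficients, whose matrix is precisely $\tilde{\mathbb{D}}$ from~\eqref{eq:D_tilde}. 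Consequently $\mathcal{J}_i^n$ is a bijection if and only if $\tilde{\mathbb{D}}$ is nonsingular, handling both implications at once. (If $n_s=q+1$, the projection step is void and the same argument applies, with all $q+1$ coefficients recovered from the side conditions alone.)

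Assuming now that $\tilde{\mathbb{D}}$ is nonsingular, the linear map
\[
p\;\mapsto\;\Bigl(\mathcal{P}_{[-1,1]}^{q-n_s}(p),\,p(2\theta_1-1),\ldots,p(2\theta_{n_s}-1)\Bigr)
\]
is injective on the finite-dimensional space $\mathbb{P}_q[-1,1]$, so the functional
\[
p\;\mapsto\; \sqrt{\|\mathcal{P}_{[-1,1]}^{q-n_s}(p)\|_{L^2[-1,1]}^2 + \sum_{k=1}^{n_s}|p(2\theta_k-1)|^2}
\]
is a norm on that space, equivalent to $\|p\|_{L^2[-1,1]}$ with constants depending only on $q$, $n_s$, and $\{\theta_k\}$. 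Transforming back to $I_i^n$ multiplies each $L^2$-in-time integral by $\Delta t_i/2$; the $\Delta t_i$-weighting built into $\llVert\mathcal{J}_i^n(v_i)\rrVert$ on the point-value terms is exactly what matches that scaling. Summing over components of $\mathbf{w}$ (equivalently, integrating over $\Omega_i$) then delivers~\eqref{eq:J_trip_norm_equiv} with $C_1,C_2$ depending only on $q$, $n_s$, and $\{\theta_k\}$, and independent of $\Delta t_i$ and $d_{\Omega_i}$.

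The main obstacle I anticipate is not a deep estimate but the careful bookkeeping that makes the constants simultaneously $\Delta t_i$-independent and $d_{\Omega_i}$-independent. The $\Delta t_i$ factor multiplying the point-evaluation terms in $\llVert\mathcal{J}_i^n(v_i)\rrVert$ must exactly balance the Jacobian produced by the reference-interval change of variable, and the Cholesky reduction must genuinely commute with both the $L^2$-projection in time and pointwise evaluation in time. Both commutations are automatic because those operations act linearly on the time-coefficient vector and do not see the spatial index, so once the bookkeeping is written out cleanly the lemma follows.
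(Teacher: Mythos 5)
Your proposal is correct and follows essentially the same route as the paper's proof: rescale to the reference interval $[-1,1]$, expand in the orthogonal basis so that $\mathcal{P}^{q-n_s}$ becomes coefficient truncation and the side conditions become the linear system with matrix $\tilde{\mathbb{D}}$, then invoke equivalence of norms on the finite-dimensional space $\mathbb{P}_q[-1,1]$ and reduce to the scalar case (your Cholesky substitution plays the role of the paper's orthonormal basis of $U_i$) to get constants independent of $\Delta t_i$ and $d_{\Omega_i}$.
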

The proof is shown in~\appref{proof:lem:side_conditions}. 

We show that the problems are well-posed over a coupling window by constructing 
Cauchy sequences.  There are a few steps necessary.  Assume that 
all variables are already given on coupling window $\tilde{k}$ for 
all $\tilde{k}<\tilde{n}$.  
Let $m$ denote the iteration 
index.  Given $u_i^{k}$ and $U_{i}^{k}$, $k\leq 0$, and $\Fi^{\tilde{k}}$ for $\tilde{k}<\tilde{n}$, $i=1,2$, the 
updates for iteration $m$ are computed as follows.  If $n_s>0$, the 
side conditions are 
\begin{equation} 
    u_{i(m)}^{n} (t_{i}^{n,k}) 
    =  
\sum_{l=1}^n \left(\mathbb{D}\right)_{k,l} U_{i(m)}^{n+1-l} 
    +\sum_{l=n+1}^{k_s+1} \left(\mathbb{D}\right)_{k,l} U_i^{n+1-l}
    , \ 
    1\leq k \leq n_s, \label{eq:iter_side_conditions} 
\end{equation} 
for $1\leq n\leq M_i$.  We solve 
\begin{align}
   \left( U_{i(m)}^{n},v_i^{-} \left(t_i^{n}\right) \right) -\int_{I_i^n} \left(u_{i(m)}^n , \dot{v}_i \right) \, dt &=
   \left( U_{i(m)}^{n-1}, v_i^{+} \left(t_i^{n-1}\right) \right) \label{eq:exist_1} \\
   -\int_{I_i^n}L\left( u_{i(m-1)}^n, v_i \right)  +  \left( F_{i(m-1)}^{\tilde{n}} , v_i \right)_\Gamma  &-\left( f_i ,v_i \right) \, dt, \ \forall v_i\in \mathcal{U}_i^n (q+1-{n_s}), \nonumber
\end{align}
for $1\leq n\leq M_i$ and $i=1,2$.  
Note that $U_{i(m)}^{0}:=U_{i}^{0}$ is given, and does not actually depend 
on $m$.  Also, we have 
\begin{equation}
    \int_{I^{\tilde{n}}} \left( u^{(m)}_{\Gamma ,i} , \mu \right)_\Gamma \, dt = \sum_{n=1}^{M_i}\int_{I_i^n}\left( u_{i(m)}^n , \mu \right)_\Gamma \, dt , \ \forall \mu \in \mathcal{V}^{\tilde{n}} ({r_i}), \ i=1,2, \label{eq:exist_2}
\end{equation}
and the numerical flux conditions 
\begin{equation}
\int_{I^{\tilde{n}}} \left(  F_{i(m)}^{\tilde{n}} , \lambda_i \right)_\Gamma \, dt
= \int_{I^{\tilde{n}}} \left( b_{i,1} u^{(m)}_{\Gamma ,1} +b_{i,2} u^{(m)}_{\Gamma ,2} -g_i , \lambda_i \right)_\Gamma \, dt  , \ \forall \lambda_i \in \mathcal{V}^{\tilde{n}} ({r_i}). 
\label{eq:exist_3} 
\end{equation}  
 
\begin{lem}[Existence of iterations]\label{lem:iter_existence} 
If $n_s>0$, assume that $\tilde{\mathbb{D}}$ 
(see~\lemref{lem:side_conditions}) is nonsingular.  
Given any initial guesses for $m=0$, the 
system~\eqref{eq:iter_side_conditions}-\eqref{eq:exist_3} then 
admits a unique sequence of solutions.  
\end{lem}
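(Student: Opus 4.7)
The plan is to exploit the sequential structure of the iteration. At iteration $m$, every right-hand side in~\eqref{eq:exist_1} depends only on quantities from iteration $m-1$ (through $L(u_{i(m-1)}^n,\cdot)$ and $F_{i(m-1)}^{\tilde{n}}$), on given forcings $f_i$, and on the substep-level value $U_{i(m)}^{n-1}$ from the preceding substep at the same iteration (with $U_{i(m)}^{0}=U_i^{0}$ given). Hence each iteration decouples into a strictly triangular sweep: for each $i\in\{1,2\}$, solve~\eqref{eq:iter_side_conditions}-\eqref{eq:exist_1} substep-by-substep for $n=1,\ldots,M_i$ to obtain $(u_{i(m)}^n,U_{i(m)}^n)$; then compute $u^{(m)}_{\Gamma,i}$ from~\eqref{eq:exist_2} and $F_{i(m)}^{\tilde{n}}$ from~\eqref{eq:exist_3}. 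The latter two are weighted $L^2$-projections onto finite-dimensional spaces and are unambiguously well-posed, so the entire lemma reduces to uniqueness and existence of each substep linear system.

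Each substep subproblem is a square finite-dimensional system: there are $(q+2)d_{\Omega_i}$ unknowns (the $q+1$ time-polynomial coefficients of $u_{i(m)}^n$ plus the side value $U_{i(m)}^n$, each expanded in the basis $\{\phi_{i,j}\}$) matched by $n_s d_{\Omega_i}$ side conditions and $(q+2-n_s)d_{\Omega_i}$ variational constraints. It therefore suffices to prove injectivity. I would first test~\eqref{eq:exist_1} with a time-independent function $v_i(\bChar{x},t)=\phi(\bChar{x})\in U_i\subset\mathcal{U}_i^n(q+1-n_s)$, admissible because $n_s\leq q+1$. Since $\dot{v}_i=0$ and the homogeneous problem nullifies every right-hand side, this collapses to $(U_{i(m)}^n,\phi)=0$ for all $\phi\in U_i$, forcing $U_{i(m)}^n=0$. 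With all history data zeroed, the side conditions~\eqref{eq:iter_side_conditions} then reduce to $u_{i(m)}^n(t_i^{n,k})=0$ for $1\leq k\leq n_s$.

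The main obstacle is then to conclude $u_{i(m)}^n\equiv 0$ from these pointwise vanishings and the remaining variational information. Fixing $\phi\in U_i$ and testing with $v_i(\bChar{x},t)=V(t)\phi(\bChar{x})$ for $V\in\mathbb{P}_{q+1-n_s}(I_i^n)$, the variational equation reduces to
\begin{equation*}
\int_{I_i^n} V'(t)\,w(t)\,dt = 0, \quad w(t) := \left( u_{i(m)}^n(\cdot,t),\phi \right).
\end{equation*}
Since $V'$ ranges over all of $\mathbb{P}_{q-n_s}(I_i^n)$, this says that $w\in\mathbb{P}_q(I_i^n)$ is $L^2$-orthogonal to $\mathbb{P}_{q-n_s}(I_i^n)$. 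Expanding $w$ in the basis $\{\hat{\psi}_j\}$ from~\lemref{lem:side_conditions} (after affinely rescaling $I_i^n$ to $[-1,1]$) yields $w=\sum_{k=1}^{n_s} c_k \hat{\psi}_{q-n_s+k}$, and the pointwise zeros $w(t_i^{n,k})=0$ for $1\leq k\leq n_s$ become the homogeneous linear system $\tilde{\mathbb{D}}\,\mathbf{c}=\mathbf{0}$ with $\tilde{\mathbb{D}}$ exactly as defined in~\eqref{eq:D_tilde}. The hypothesized nonsingularity of $\tilde{\mathbb{D}}$ gives $\mathbf{c}=\mathbf{0}$ and hence $w\equiv 0$; since $\phi$ was arbitrary, $u_{i(m)}^n\equiv 0$ on $I_i^n$. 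Injectivity is thereby established, uniqueness at each substep propagates inductively in $n$ and over $i\in\{1,2\}$, and the $L^2$-projection steps complete the construction of the claimed unique iteration sequence.
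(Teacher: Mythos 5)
Your argument is correct, and its skeleton matches the paper's: reduce to a sequential sweep over substeps, note that \eqref{eq:exist_2}--\eqref{eq:exist_3} are well-posed finite-dimensional $L^2$-projections, observe that each substep is a square linear system, and prove injectivity of the homogeneous system by first killing $U_{i(m)}^n$ with a time-constant test function (which also zeroes the right-hand sides of the homogeneous side conditions, since the only surviving term there is $(\mathbb{D})_{k,1}U_{i(m)}^n$). Where you diverge is the mechanism for finishing off $u_{i(m)}^n$. The paper inserts the single weighted test function $v_i=(t_i^n-t)\Phi_i^n$ with $\Phi_i^n=\mathcal{P}_{I_i^n}^{q-n_s}(u_{i(m)}^n)$ and integrates by parts to obtain the sum-of-squares identity $\tfrac12\llVert\Phi_i^n\rrVert^2+\tfrac{\Delta t_i}{2}\|\Phi_i^n(t_i^{n-1})\|^2=0$, then invokes the bijectivity of $\mathcal{J}_i^n$ from \lemref{lem:side_conditions}. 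You instead use that $\dot v$ sweeps out all of $\mathcal{U}_i^n(q-n_s)$ to conclude $\mathcal{P}_{I_i^n}^{q-n_s}(u_{i(m)}^n)=0$ directly by orthogonality, and then unpack the nonsingularity of $\tilde{\mathbb{D}}$ in scalar form via the basis $\{\hat\psi_j\}$ rather than citing the bijectivity statement. Your route is arguably more elementary and transparent for this qualitative lemma (and degenerates cleanly in the edge cases $n_s=0$ and $n_s=q+1$); the paper's weighted-test-function identity is the quantitative version of the same fact, which is reused with $\Delta u_{i(m)}^n$ in the proof of \thmref{thm:existence} to extract the bound \eqref{eq:exist_pr2}, so the authors' choice buys uniformity with the contraction argument that follows. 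No gaps.
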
 
The proof is shown in~\appref{proof:lem:iter_existence}.  

\begin{lem}[Lift and inverse inequalities]\label{lem:ineqs} 
Let $\tau_i$ (\secref{sec:semidiscrete}) be a regular family 
of meshes with mesh parameters $h_i$ for $i=1,2$.  Define  
$h:=\max \{ h_1 , h_2\}$.  If the advection fields satisfy 
$\bChar{s}_i\in [W^{1,\infty}(\Omega_i)]^d$, $i=1,2$, then there 
exists a constant $C>0$, independent of $h$, such that 
\begin{align}
|L(v_i , w_i) |
    &\leq C h^{-2} \| v_i \|\, \| w_i\|, \quad \forall v_i,\, w_i\in U_i, \label{eq:inverse_ineq} \\ 
 |  (v_i , w_i)_\Gamma  |
    &\leq Ch^{-1} \|v_i \|\, \|w_j \|, \quad \forall v_i \in U_i,\, w_j\in U_j, \ i,j\in \{1,2\} \label{eq:lift_ineq1} \\ 
        \left| (v_i , {w}_\Gamma )_\Gamma \right| 
    &\leq Ch^{-1/2} \|v_i \|\, \|{w}_\Gamma \|_\Gamma, \quad \forall v_i \in U_i,\, w_\Gamma \in U_\Gamma , \ i\in \{1,2\} . \label{eq:lift_ineq2}
\end{align}
\end{lem}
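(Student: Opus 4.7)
The three inequalities are standard consequences of polynomial inverse estimates on shape-regular meshes, combined with a trace-type inverse inequality. My plan is to first collect the two standard ingredients and then treat each bound in turn, showing that the claimed powers of $h$ emerge naturally from a scaling argument.

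The first ingredient is the classical finite element inverse inequality (see, e.g.,~\cite{BS2008,Ciarlet2002}): since $\tau_i$ is regular, there exists $C>0$ independent of $h$ such that
\begin{equation*}
\| \nabla v_i \| \leq C h^{-1} \| v_i \|, \quad \forall v_i \in U_i,\ i=1,2.
\end{equation*}
The second ingredient is the elementwise trace inverse inequality, valid on each $E_{i,j}$ adjacent to $\Gamma$ for polynomials of bounded degree: $\| v_i \|_{L^2(\partial E_{i,j} \cap \Gamma)} \leq C h^{-1/2} \|v_i \|_{L^2(E_{i,j})}$. Summing over the boundary strip of elements that touch $\Gamma$ and applying Cauchy--Schwarz yields the global bound
\begin{equation*}
\| v_i \|_\Gamma \leq C h^{-1/2} \| v_i \|,\quad \forall v_i\in U_i.
\end{equation*}

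With these in hand, for~\eqref{eq:inverse_ineq} I would split $L(v_i,w_i)=a(v_i,w_i)+b(v_i,w_i)$. Cauchy--Schwarz gives $|a(v_i,w_i)|\leq \nu_i \|\nabla v_i\|\,\|\nabla w_i\|$, and applying the inverse inequality to each factor produces the $h^{-2}$ scaling. For the advection form, I would expand $\nabla\cdot(\bChar{s}_i v_i)=(\nabla\cdot \bChar{s}_i) v_i + \bChar{s}_i\cdot\nabla v_i$, use the $W^{1,\infty}$ bound on $\bChar{s}_i$, and apply Hölder together with the inverse inequality once to obtain $|b(v_i,w_i)|\leq C(1+h^{-1}) \|v_i\|\,\|w_i\|$, which is dominated by the $h^{-2}$ bound for $h$ small.

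For~\eqref{eq:lift_ineq1}, I would apply Cauchy--Schwarz on $\Gamma$ and then the global trace inverse inequality to both factors: $|(v_i,w_j)_\Gamma|\leq \|v_i\|_\Gamma \|w_j\|_\Gamma \leq Ch^{-1/2}\|v_i\| \cdot Ch^{-1/2}\|w_j\|$, yielding $Ch^{-1}\|v_i\|\|w_j\|$. For~\eqref{eq:lift_ineq2}, only $v_i$ is bounded via the trace inverse inequality, since $w_\Gamma$ is already measured in $\|\cdot\|_\Gamma$, giving the sharper $h^{-1/2}$ power. The only part requiring some care is the advection term in~\eqref{eq:inverse_ineq}, where one must verify that the $W^{1,\infty}$ regularity of $\bChar{s}_i$ is enough to avoid picking up extra negative powers of $h$; everything else is a direct application of the two inverse estimates above.
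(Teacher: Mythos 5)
Your proposal is correct and follows exactly the route the paper intends: the paper's own proof is a one-line appeal to ``standard applications of the Poincar\'e inequality and inverse inequalities from finite element analysis,'' and your argument simply fills in those standard steps (the $H^1$--$L^2$ inverse estimate, the scaled trace inequality $\| v_i\|_\Gamma \leq Ch^{-1/2}\|v_i\|$, and Cauchy--Schwarz), with the only point of care being the advection term, which you handle correctly since $1+h^{-1}\leq Ch^{-2}$ for bounded $h$. No gaps.
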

\begin{proof}
The results follow from standard 
applications of the Poincar\'e inequality and inverse inequalities 
from finite element analysis; \eg~\cite{BS2008,Ciarlet2002}.  
\end{proof}

The next result shows existence of discrete approximations 
for any methods derived in the monolithic, multirate 
framework, as defined 
by substepping methods in the~\DGIT 
form~\eqref{eq:side_conditions}-\eqref{eq:DG_1a} 
with coupling determined 
via~\eqref{eq:DG_1c}-\eqref{eq:DG_1d}. 
\begin{thm}[Multirate existence]\label{thm:existence}
If the assumptions of~\lemref{lem:iter_existence} 
and~\lemref{lem:ineqs} hold, there exists a constant $C>0$ 
such that, if $\Delta t(h^{-2}+h^{-1})\leq C$, then the 
problem~\eqref{eq:side_conditions}-\eqref{eq:DG_1a},~\eqref{eq:DG_1c}-\eqref{eq:DG_1d}  
admits a unique solution.  
\end{thm}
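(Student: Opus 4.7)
The plan is to establish existence and uniqueness simultaneously by showing that the iteration~\eqref{eq:iter_side_conditions}-\eqref{eq:exist_3} is a contraction (in an appropriate norm) under the stated time step restriction, and then take a limit. By~\lemref{lem:iter_existence}, each iterate $m$ is well-defined given the previous one, so the only remaining ingredient is to exhibit convergence of the sequence.

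\textbf{Step 1: Set up the iteration-differences.} I would denote increments of the iteration by
\begin{equation*}
e_{i(m)}^n := u_{i(m)}^n - u_{i(m-1)}^n, \quad E_{i(m)}^n := U_{i(m)}^n - U_{i(m-1)}^n, \quad \Phi_{i(m)}^{\tilde{n}} := F_{i(m)}^{\tilde{n}} - F_{i(m-1)}^{\tilde{n}} .
\end{equation*}
Subtracting the $(m)$-st from the $(m-1)$-st instance of the system kills the data terms ($U_i^0$, previous-window fluxes, $f_i$, $g_i$), leaving a homogeneous recursion: the side conditions~\eqref{eq:iter_side_conditions} become a purely linear relation tying $e_{i(m)}^n$ at the side-nodes to $E_{i(m)}^{n+1-l}$ for $l\geq 1$, while~\eqref{eq:exist_1} has the right-hand side depending only on the previous iterate $e_{i(m-1)}^n$ and $\Phi_{i(m-1)}^{\tilde{n}}$ through the $L$- and interface-coupling terms.

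\textbf{Step 2: Bound the flux increment by the state increments.} From~\eqref{eq:exist_2}-\eqref{eq:exist_3}, the flux increment $\Phi_{i(m)}^{\tilde{n}}$ is the $L^2$-projection in time (on the coupling window) of the $\Gamma$-trace linear combination $b_{i,1}e_{1(m)} + b_{i,2}e_{2(m)}$ projected via the auxiliary variables. Using the $L^2$-stability of the projections defining $u_{\Gamma,i}^{(m)}$ and $F_{i(m)}^{\tilde{n}}$, I would derive a bound of the form
\begin{equation*}
\llVert \Phi_{i(m)}^{\tilde{n}} \rrVert_\Gamma \;\leq\; C \sum_{j=1,2}\sum_{n=1}^{M_j} \llVert e_{j(m)}^n \rrVert_\Gamma ,
\end{equation*}
that is, the flux increment on the coupling window is controlled by the traces of the state increments on all substeps.

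\textbf{Step 3: Energy estimate on a substep.} The central computation is to test~\eqref{eq:exist_1}, written for the difference iterates, with a judicious choice of $v_i \in \mathcal{U}_i^n(q+1-n_s)$. Using the bijection $\mathcal{J}_i^n$ from~\defref{defn:isomorphism} and the norm equivalence~\eqref{eq:J_trip_norm_equiv} supplied by~\lemref{lem:side_conditions}, one can identify $e_{i(m)}^n$ with the pair $(\mathcal{P}_{I_i^n}^{q-n_s}(e_{i(m)}^n),\text{side values})$ and take the test function to be its component in $\mathcal{U}_i^n(q-n_s)$ plus a compensating piece that reveals the side values $E_{i(m)}^n$, following the standard~\DGIT existence arguments of~\cite{DelDub1986}. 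This produces a lower bound on the left-hand side of the form $C_1\llVert \mathcal{J}_i^n(e_{i(m)}^n) \rrVert^2$. On the right-hand side, the $L$-term and the interface term are bounded by~\lemref{lem:ineqs}:
\begin{equation*}
\int_{I_i^n} |L(e_{i(m-1)}^n, v_i)|\,dt \leq C\Delta t_i h^{-2} \llVert e_{i(m-1)}^n\rrVert \, \llVert v_i\rrVert ,
\end{equation*}
and likewise the $\Gamma$-inner product yields a factor $h^{-1}$ after applying the lift inequality~\eqref{eq:lift_ineq2} in combination with the flux bound from Step~2.

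\textbf{Step 4: Close the contraction.} Summing the substep estimates over $n=1,\ldots,M_i$ and $i=1,2$, using Step~2 to absorb the flux increment, and applying Young's inequality together with the norm equivalence of~\lemref{lem:side_conditions}, I expect an estimate of the form
\begin{equation*}
\sum_{i,n} \llVert \mathcal{J}_i^n(e_{i(m)}^n)\rrVert^2 \;\leq\; C \Delta t \bigl( h^{-2}+h^{-1}\bigr) \sum_{i,n} \llVert \mathcal{J}_i^n(e_{i(m-1)}^n)\rrVert^2 .
\end{equation*}
Choosing the constant $C$ in the hypothesis $\Delta t(h^{-2}+h^{-1})\leq C$ small enough to make the prefactor strictly less than one yields a contraction. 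The sequence $(u_{i(m)}^n, U_{i(m)}^n, F_{i(m)}^{\tilde{n}})$ is therefore Cauchy in the (finite-dimensional) product space, and its limit satisfies~\eqref{eq:side_conditions}-\eqref{eq:DG_1a},~\eqref{eq:DG_1c}-\eqref{eq:DG_1d}. Uniqueness is immediate from the contraction, because any two solutions would be fixed points and the difference would satisfy the same contraction estimate with itself on both sides.

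\textbf{Main obstacle.} The delicate step is Step~3: selecting a test function through $\mathcal{J}_i^n$ that produces a coercive bound on the left-hand side without destroying the usable Cauchy--Schwarz estimates on the right. The jump term $(U_{i(m)}^n - U_{i(m)}^{n-1})$ must be handled carefully so that the substep-to-substep propagation does not blow up the constant, and the mismatch between the test-space order $q+1-n_s$ and the trial-space order $q$ must be reconciled through the projector $\mathcal{P}_{I_i^n}^{q-n_s}$. The remaining bookkeeping (time-stepping across the coupling window and coupling the two subdomains through the auxiliary variables) is then routine once the per-substep inequality is in hand.
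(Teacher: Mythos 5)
Your proposal is correct and follows essentially the same route as the paper: difference successive iterates of the fixed-point system~\eqref{eq:iter_side_conditions}--\eqref{eq:exist_3}, bound the flux increment through the $L^2$-stability of the projections~\eqref{eq:exist_2}--\eqref{eq:exist_3} combined with~\lemref{lem:ineqs}, obtain per-substep estimates via test functions that separately control the side values and the $\mathcal{P}_{I_i^n}^{q-n_s}$-component (the paper uses $v_i=\Delta U_{i(m)}^n$ and $v_i=(t_i^n-t)\Phi_i^n$ where you describe a single combined choice through $\mathcal{J}_i^n$, but the substance is identical), and close a contraction with factor $C\Delta t(h^{-2}+h^{-1})$ to produce a Cauchy sequence whose limit is the unique solution. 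The only cosmetic difference is that the paper deduces uniqueness by observing the monolithic problem is a square linear system, while you invoke the contraction directly; both are equivalent here.
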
  
The proof is shown in~\appref{proof:thm:existence}

\subsection{Convergence to the semi-discrete solution}\label{sec:convergence} 
Given an ODE system 
with a solution that is smooth enough, one expects the order of 
accuracy for a DG time stepping method using polynomials of 
order $q$ (like our spaces $\mathcal{U}_i^n (q)$) and time step size 
$\Delta t$ to be $\Order{\Delta t^{q+1}}$, as measured in the 
$L^2$-norm in time.  However, side-values can have {\it nodal}  
convergence of order 
$\Order{\Delta t^{2q+2-n_s}}$, with an 
additional requirement for the properties of the system.  
Details are shown in~\cite{DelDub1986}.  
In this section, we show the analogous results for the multirate 
coupling framework.  

In the next result, the consistency assumption~\eqref{eq:side_value_assumption} 
for side conditions means that these conditions generate an interpolant in 
time for $u_i$ at each point $t_i^{n,k}$ with order of accuracy 
$\Delta t_i^{q+1}$.  In the Crank-Nicolson example shown earlier, we 
have $q=1$, $n_s=2$, $k_s=1$, $t_i^{n,1}=t_i^{n-1}$, $t_i^{n,2}=t_i^{n}$,  $(\mathbb{D})_{1,1}=(\mathbb{D})_{2,2}=0$ and 
$(\mathbb{D})_{1,2}=(\mathbb{D})_{2,1}=1$.  
In this case,~\eqref{eq:side_value_assumption} holds vacuously 
because 
\begin{multline*}
\Delta t_i \sum_{k=1}^{n_s} \left\|  u_i (t_{i}^{n,k}) -\sum_{l=1}^{k_s+1} (\mathbb{D})_{k,l} u_i \left(t_i^{n+1-l} \right) \right\|^2 \\ 
=\Delta t_i \left( \| u_i (t_i^{n-1}) -u_i (t_i^{n-1}) \|^2 +\| u_i (t_i^{n}) -u_i (t_i^{n}) \|^2  \right) = 0 . 
\end{multline*} 
\begin{thm}[$L^2$-convergence]\label{thm:L2} 
Assume that the variables $u_i$ and $F_{i}$ 
solving~\eqref{eq:semi_1a}-\eqref{eq:semi_1d} 
satisfy $u_i \in \mathcal{H}_i^{q+1} (n)$, $1\leq n\leq M_i$, 
$(u_i)|_\Gamma \in \mathcal{H}_\Gamma^{r_i+1} (\tilde{n})$ 
and $F_i \in \mathcal{H}_\Gamma^{r_i+1} (\tilde{n})$ for 
all coupling windows $1\leq \tilde{n}\leq N$, where 
$N= (\tf / \Delta t)\in \nats$, with uniform bounds
\begin{equation}
\begin{aligned}
   \sum_{\tilde{n}=1}^N \sum_{n=1}^{M_i} \sum_{m=0}^{q+1} \int_{I_i^n} \left\| D^{(m)} u_i\right\|^2 \, dt &\leq C \\
     \sum_{\tilde{n}=1}^N \sum_{m=0}^{q+1} \int_{I^{\tilde{n}}} \left\| D^{(m)} u_i\right\|_{\Gamma}^2 \, dt &\leq C \\ 
          \sum_{\tilde{n}=1}^N \sum_{m=0}^{q+1} \int_{I^{\tilde{n}}} \left\| D^{(m)} F_i\right\|_{\Gamma}^2 \, dt &\leq C , 
   \end{aligned}
   \label{eq:solution_bounds} 
\end{equation} 
for some $C>0$ independent of $\Delta t$ or $\Delta t_i$.  
The side conditions 
are assumed to support the consistency bounds 
\begin{multline}
    \Delta t_i \sum_{k=1}^{n_s} \left\|  u_i (t_{i}^{n,k}) -\sum_{l=1}^{k_s+1} (\mathbb{D})_{k,l} u_i \left(t_i^{n+1-l} \right) \right\|^2 \\ 
    \leq C (\Delta t_i)^{2q+2} \int_{t_i^{n-k_s}}^{t_i^n} \sum_{m=0}^{q+1} \left\| D^{(m)} u_i \right\|^2\, dt , 
    \label{eq:side_value_assumption} 
\end{multline}  
for $1\leq n\leq M_i$ on every coupling window.  
Also, we assume the side value 
errors for the initialization data satisfy, for some $p>0$, 
\begin{equation}
\sqrt{\sum_{i=1,2} \left\| u_i (t=0)-U_i^0 \right\|^2} \leq C \Delta t^{p} , \label{eq:init_accuracy1} 
\end{equation} 
at the initial time, and if $N_0>1$ then 
\begin{align}
 \sqrt{\sum_{i=1,2} \sum_{n=1}^{M_i} \left\| u_i (t_i^n) -U_i^{n} \right\|^2} &\leq C \Delta t^{p},  \label{eq:init_accuracy2}  \\ 
 \sqrt{\sum_{i=1,2} \sum_{n=1}^{M_i} \llVert u_i -u_i^n \rrVert^2} &\leq C\Delta t^{p}.  \label{eq:init_accuracy3}
 \end{align}
 on each coupling window $\tilde{n}$, $1\leq \tilde{n} <N_0$.  
If $\Delta t_1$ and $\Delta t_2$ are small enough, then on 
each coupling window $\tilde{n}$ for $N_0 \leq \tilde{n} \leq N$, the error for the~\DGIT methods satisfy 
\begin{equation}
         \left\| u_i (t_i^n) -U_i^{n} \right\| \leq C_h \left( \Delta t^{p} +\left(  \sum_{i=1,2}\Delta t_i^{q+1} + \Delta t^{r_i+1}\right) \right) ,  \label{eq:side_errors} 
        \end{equation} 
        for $1\leq n\leq M_i$ and $i=1,2$, and also  
        \begin{equation}
       \sqrt{\sum_{i=1,2} \sum_{n=1}^{M_i}\llVert u_i -u_i^n \rrVert^2} \leq C_h \left( \Delta t^{p} + \sum_{i=1,2}\left( \Delta t_i^{q+1} + \Delta t^{r_i+1}\right) \right) ,  \label{eq:L2_errors} 
\end{equation}
where $C_h>0$ depends on the solution 
of~\eqref{eq:semi_1a}-\eqref{eq:semi_1d}, $N_0$, 
$\tf$, and possibly $h$, but not $\Delta t_i$, $M_i$ ($i=1,2$) $\Delta t$, or $\tilde{n}$.  
\end{thm}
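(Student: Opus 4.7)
The plan is to adapt the standard Dupont--Deléage Galerkin-in-time error framework to the multirate coupled setting, splitting the error into a projection (interpolation) part handled by approximation theory and a discrete part handled by a stability estimate that mimics Theorem~1 above. First, I would define an interpolant $\Pi_i^n u_i \in \mathcal{U}_i^n (q)$ of the true semi-discrete solution via the bijection $\mathcal{J}_i^n$ of Lemma~1: namely, I would set the side-condition components equal to the true nodal values $u_i(t_i^{n,k})$ and the remaining $\mathbb{P}_{q-n_s}$-component equal to $\mathcal{P}_{I_i^n}^{q-n_s} u_i$. The norm equivalence~\eqref{eq:J_trip_norm_equiv}, together with standard polynomial approximation estimates and the consistency hypothesis~\eqref{eq:side_value_assumption}, should deliver
\[
\llVert u_i - \Pi_i^n u_i \rrVert \leq C (\Delta t_i)^{q+1} \sqrt{\int_{t_i^{n-k_s}}^{t_i^n} \sum_{m=0}^{q+1} \|D^{(m)} u_i \|^2 \, dt}.
\]
Analogously, on the interface I would use the $L^2$-projector $\mathcal{P}_{I^{\tilde{n}}}^{r_i}$ onto $\mathcal{V}^{\tilde n}(r_i)$ to approximate $u_i|_\Gamma$ and $F_i$, giving residuals of size $\Delta t^{r_i+1}$ in the $\llVert \cdot \rrVert_\Gamma$ norm.

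Next I would write an equation for the discrete parts $\eta_i^n := \Pi_i^n u_i - u_i^n \in \mathcal{U}_i^n(q)$, $H_i^n := u_i(t_i^n) - U_i^n$, $\zeta_i^{\tilde n} := \mathcal{P}_{I^{\tilde n}}^{r_i}(u_i|_\Gamma) - u_{\Gamma,i}^{\tilde n}$, and $\Phi_i^{\tilde n} := \mathcal{P}_{I^{\tilde n}}^{r_i} F_i - F_i^{\tilde n}$, obtained by subtracting the scheme~\eqref{eq:DG_1a},~\eqref{eq:DG_1c},~\eqref{eq:DG_1d} from the semi-discrete equations evaluated on the interpolant. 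The right-hand sides will consist of quadrature/consistency residuals bounded by the projection errors above, and the initialization error $H_i^0$ propagated from previous windows. Following the structure of Theorem~2, I would select the test function $v_i^n = \eta_i^n$ in the error version of~\eqref{eq:DG_1a}, and test the flux error equations with $\lambda^{\tilde n} = \Phi_i^{\tilde n}$ in~\eqref{eq:DG_1c} and $\lambda^{\tilde n} = \zeta_i^{\tilde n}$ in~\eqref{eq:DG_1d}. Summing over $n$ and $i$, the coupling contributions combine to a nonnegative quadratic form driven by $\mathbb{B}$ (as in Theorem~2), leaving on the right-hand side only coercive dissipation $L(\eta_i^n,\eta_i^n) \geq L_{i,1} \|\eta_i^n\|_1^2$ and the consistency residuals.

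Bounding the residual terms uses the lift/inverse inequalities of Lemma~3 to transfer norms on $\Gamma$ to volume norms and back; this is where the $h$-dependent prefactor $C_h$ enters. After moving absorbable terms to the left using a small-$\Delta t$ condition (analogous to the one in Theorem~1), I would obtain a per-window estimate of the form
\[
\sum_{i=1,2} \|H_i^{M_i}\|^2 + \sum_{i=1,2}\sum_{n=1}^{M_i} \llVert \eta_i^n \rrVert^2 \leq (1+C\Delta t) \sum_{i=1,2}\|H_i^0\|^2 + C_h \Big( \sum_{i=1,2} \Delta t_i^{2q+2} + \Delta t^{2r_i+2} \Big) R_{\tilde n},
\]
where $R_{\tilde n}$ collects the window-local solution norms from~\eqref{eq:solution_bounds}. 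Iterating this recursion over $\tilde n = N_0, \dots, N$ and applying a discrete Grönwall inequality, while using~\eqref{eq:init_accuracy1}--\eqref{eq:init_accuracy3} to seed the induction, yields the final bounds~\eqref{eq:side_errors} and~\eqref{eq:L2_errors} after reassembling via the triangle inequality with the approximation-error terms.

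The main obstacle I expect is the careful bookkeeping around the interfacial coupling: the $L^2$-projection onto $\mathcal{V}^{\tilde n}(r_i)$ couples all substeps on a window, so the test-function selection must be done window-by-window to cancel the sign-indefinite flux term cleanly, and any consistency residual there must be expressible in terms that can be absorbed by the volume coercivity through the lift inequalities~\eqref{eq:lift_ineq1}--\eqref{eq:lift_ineq2} without destroying the $\Delta t$-power. A secondary delicate point is obtaining \emph{nodal} bounds~\eqref{eq:side_errors} with the same powers as the $L^2$-in-time bound~\eqref{eq:L2_errors}, which will require testing~\eqref{eq:DG_1a} with the indicator-like polynomial that isolates the side value $U_i^n$ (using the norm equivalence from Lemma~1) rather than just $\eta_i^n$ itself.
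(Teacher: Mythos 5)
Your overall architecture --- an interpolant built from the side-condition data via the bijection $\mathcal{J}_i^n$, $L^2$-projections on the coupling window for the interface quantities, an error splitting into interpolation and discrete parts, and a per-window recursion closed by a discrete Gr\"onwall argument --- matches the paper's proof. The gap is in the stability step. First, your chosen test function $v_i^n = \Pi_i^n u_i - u_i^n$ lies in $\mathcal{U}_i^n(q)$, but the error form of~\eqref{eq:DG_1a} only admits test functions in $\mathcal{U}_i^n(q+1-n_s)$; for any method with $n_s\geq 2$ side conditions (including the flagship Crank--Nicolson realization, where $q=1$ and $n_s=2$, so the test space consists of constants in time) this test function is inadmissible. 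The paper circumvents this by working with the lower-order projection $\Phi_i^n:=\mathcal{P}_{I_i^n}^{q-n_s}(\psi_i^n)$ of the discrete error $\psi_i^n:=\hat{u}_i^n-u_i^n$ (your $\eta_i^n$), testing with the antiderivative $v_i^n=\int_t^{t_i^n}\Phi_i^n\,dt \in \mathcal{U}_i^n(q+1-n_s)$ to control $\llVert\Phi_i^n\rrVert$, and then recovering $\llVert\psi_i^n\rrVert$ from $\llVert\Phi_i^n\rrVert$ plus the side-value errors through the norm equivalence~\eqref{eq:J_trip_norm_equiv} and the consistency bound~\eqref{eq:side_value_assumption}; the side values themselves are controlled separately by testing with the constant $v_i^n=E_i^n$.

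Second, your plan to cancel the interface contributions as a nonnegative quadratic form ``driven by $\mathbb{B}$'' requires $\mathbb{B}$ to be positive semi-definite, which is a hypothesis of the interfacial-energy theorem but \emph{not} of this convergence theorem. Moreover, the clean cancellation there relies on pairing the flux with the trace of the same discrete solution; in the error equations the interface right-hand sides involve the full errors $e_i^n=\eta_i^n+\psi_i^n$, so even under a positive semi-definiteness assumption you are left with interpolation/discrete cross terms on $\Gamma$ that must still be transferred back to volume norms. The paper instead makes no sign-definiteness assumption and treats both the $L$-term and the flux term as perturbations, bounded with the inverse and lift inequalities of Lemma~\ref{lem:ineqs}; this is precisely where the $h$-dependent constant $C_h$ and the hypothesis that $\Delta t_1,\Delta t_2$ be ``small enough'' (a restriction of the form $C\Delta t\,\Delta t_i(h^{-4}+h^{-2}+h^{-1})\leq \tfrac12$) originate. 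Your outline, which promises absorption by coercive dissipation alone, does not account for either feature of the stated result, so as written the argument would neither cover the case of general $\mathbb{B}$ nor explain the time-step restriction and $h$-dependence appearing in the conclusion.
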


The proof is shown in Appendix~\ref{proof:thm:L2}.  
In~\eqref{eq:side_errors}-\eqref{eq:L2_errors}, there are three sources of 
errors: approximation of initial data and data on the initialization windows 
$1\leq \tilde{n} <N_0$ (assumed order $\Delta t^p$), time integration on 
each subdomain (order $\Delta t_i^{q+1}$) and the coupling errors 
(order $\Delta t^{r_i+1}$).  

A property of~\DGIT methods is that the convergence rate for side values 
can be higher than the $L^2$-rate indicated by~\thmref{thm:L2}.  In the 
context of the multirate coupling framework, an analogous statement holds 
true at synchronization points.  The 
result is shown using a type of duality argument.  Given the side values 
$U_i^{M_i}$, $i=1,2$, for the final substeps $t_i^{M_i}=t^{\tilde{n}}$ on a 
coupling window with index $\tilde{n}$ fixed but arbitrary, $N_0\leq \tilde{n}\leq N$, 
let $w_i:[0,\tf]\to U_i$ and $\Lambda_{i}:(0,\tf]\to U_{\Gamma}$ be the solution to 
\begin{align}
    (v_i , \dot{w}_i) &= L (v_i , w_i) + (v_i ,\Lambda_{i})_\Gamma , \label{eq:semi_3a} \\ 
&\qquad \qquad \qquad \forall v_i \in U_i,\ 0<t\leq \tf,\ i=1,2, \nonumber \\ 
    (w_i (t^{\tilde{n}}), v_i) &= \left( u_i (t^{\tilde{n}}) - U_i^{M_i} ,v_i \right) ,\ \forall v_i\in U_i, \ i=1,2, \label{eq:semi_3b} 
\end{align} 
with the interface conditions given by  
\begin{equation}
(\Lambda_{i}, \mu_i )_\Gamma = (b_{1,i} w_{1} + b_{2,i} w_{2} , \mu_i )_\Gamma , \ \forall \mu_i \in U_\Gamma ,\ i=1,2.  
\label{eq:semi_3e} 
\end{equation}  

Note the transpositions compared with~\eqref{eq:semi_1a}-\eqref{eq:semi_1d}.  As discussed 
in~\secref{sec:semidiscrete}, this DAE problem 
is equivalent to a well-posed ODE system.  Furthermore, we may 
appeal to the ODE form and immediately extract the following lemma. 
\begin{lem}[Auxiliary variable bounds]\label{lem:aux_bounds} 
Define 
\[
J:= \max \{q+2-n_s , r_1+1 , r_2+1 \} .   
\]
Given sufficiently smooth advection fields $\bChar{s}_i$, $i=1,2$, 
there exists a constant $C>0$ such that 
\begin{align}
   \sum_{i=1,2} \sum_{m=0}^{j} \int_0^{\tf} \left\| D^{(m)} w_i \right\|^2 \, dt &\leq C 
   \sum_{i=1,2} \left\| u_i (t^{\tilde{n}}) - U_i^{M_i} \right\|^2 ,\ 0\leq j \leq J, \label{eq:aux_1} \\ 
   \underset{t\in[0,\tf]}{\max} \sum_{i=1,2} \left\| w_i (t) \right\|^2 &\leq C \sum_{i=1,2} \left\| u_i (t^{\tilde{n}}) - U_i^{M_i} \right\|^2 . \label{eq:aux_2} 
\end{align}
\end{lem}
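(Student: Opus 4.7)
The plan is to exploit the DAE--ODE equivalence pointed out in~\secref{sec:DAEschur}: reduce~\eqref{eq:semi_3a}--\eqref{eq:semi_3e} to a closed, finite-dimensional, linear ODE system for $(w_1, w_2)$ with time-independent coefficients, and then apply standard ODE stability and differentiation arguments. Because~\eqref{eq:semi_3e} is just the defining $L^2(\Gamma)$-identity for $\Lambda_i \in U_\Gamma$, the algebraic variable can be eliminated by Riesz representation: substituting the resulting expression into~\eqref{eq:semi_3a} yields a closed system of the form $M \dot{\bChar{w}} = A \bChar{w}$ on $U_1\times U_2$, where $M$ is the (symmetric positive-definite) block-mass operator, $A$ encodes the transposed transport--diffusion form together with the interface-coupling terms involving $b_{i,j}$, and $\bChar{w}$ collects the coefficient vectors of $(w_1, w_2)$. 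Since the advection fields $\bChar{s}_i$ and the coefficients $b_{i,j}$ are all time-independent, the operator $A$ is constant in $t$, so the ODE is autonomous. The terminal condition reads $\bChar{w}(t^{\tilde{n}}) = \bChar{e}$, where $\bChar{e}$ represents the side-value error $u_i(t^{\tilde{n}}) - U_i^{M_i}$; the system is solved both forward on $[t^{\tilde{n}},\tf]$ and backward on $[0,t^{\tilde{n}}]$, which is legitimate because $M$ is invertible.

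For the pointwise bound~\eqref{eq:aux_2}, I would follow a standard energy argument: pick $v_i = w_i$ in the reduced equation and obtain
\[
\frac{1}{2}\frac{d}{dt} \sum_{i=1,2}\|w_i\|^2 = \sum_{i=1,2}L(w_i,w_i) + \int_{\Gamma} \bChar{w}^\top \mathbb{B}^\top \bChar{w}\, d\Gamma.
\]
The continuity of $L$ on $U_i\subset H^1(\Omega_i)$ via~\eqref{eq:coercivity} combined with the inverse inequality~\eqref{eq:inverse_ineq}, together with the trace-lift bound~\eqref{eq:lift_ineq1} applied to the $\mathbb{B}$-term, produces a differential inequality $\pm \frac{d}{dt}\|\bChar{w}\|^2 \leq C \|\bChar{w}\|^2$ in each time direction. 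Gronwall's lemma, applied forward and backward from $t^{\tilde{n}}\in [0,\tf]$, then yields $\max_{t\in[0,\tf]} \sum_i\|w_i(t)\|^2 \leq C\sum_i\|u_i(t^{\tilde{n}}) - U_i^{M_i}\|^2$. Integrating this pointwise bound over $[0,\tf]$ immediately gives the $j=0$ case of~\eqref{eq:aux_1}.

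For $1\leq j\leq J$, the crucial observation is autonomy of the ODE: differentiating $M\dot{\bChar{w}} = A\bChar{w}$ in time $m$ times produces $M D^{(m+1)}\bChar{w} = A D^{(m)}\bChar{w}$, so $D^{(m)}\bChar{w}$ satisfies the same linear ODE with terminal data $(M^{-1}A)^m \bChar{e}$. Applying the Step~2 energy/Gronwall estimate to each $D^{(m)}\bChar{w}$ and summing over $0\leq m\leq j$ delivers~\eqref{eq:aux_1}, with a constant $C$ that absorbs the spectral size of $M^{-1}A$ up to power $J$ (hence depending on the spatial discretization, which is permitted here).

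The main technical concern is not the existence of the bound but the appearance of $h$-dependent constants through the inverse and lift inequalities in~\lemref{lem:ineqs}, and through powers of $M^{-1}A$ when differentiating. Since the statement of~\lemref{lem:aux_bounds} allows $C$ to be any constant (and in the subsequent duality argument in~\thmref{thm:L2} the dependence on $h$ is hidden in the constant $C_h$ of the error bounds), this is not a genuine obstruction. The only subtlety worth spelling out is verifying that the chosen value $J = \max\{q+2-n_s, r_1+1, r_2+1\}$ is precisely what is needed downstream in the duality argument for nodal super-convergence, which follows from the polynomial orders used to approximate $w_i$ and $\Lambda_i$ in that argument.
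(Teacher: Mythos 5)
Your proposal is correct and follows essentially the same route as the paper, which proves this lemma by noting the DAE reduces (after eliminating $\Lambda_i$ via its defining $L^2(\Gamma)$-identity) to a well-posed finite-dimensional linear autonomous ODE and then simply citing standard ODE/DG-in-time references for the resulting stability and derivative bounds. Your write-up merely fills in the details the paper omits (Gronwall forward and backward from the terminal time $t^{\tilde{n}}$, and repeated differentiation of the autonomous system for the higher derivatives), with the $h$-dependence of the constant correctly flagged as permitted by the statement.
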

\begin{proof}
See~\cite{DelDub1986,DHT1981} for details.  
\end{proof}

\begin{thm}[Nodal convergence]\label{thm:nodal} 
Let  
$\hw_{\Gamma ,i}^{\tilde{n}} ,\whL_{i}^{\tilde{n}} \in \mathcal{V}^{\tilde{n}}({r_i})$, $1\leq \tilde{n} \leq N$, 
satisfy 
\begin{equation*}
    \hw_{\Gamma ,i}^{\tilde{n}} := \mathcal{P}_{I^{\tilde{n}}}^{r_i} \left(  {w}_{i}|_{\Gamma} \right) , \quad 
    \whL_{i}^{\tilde{n}} := \mathcal{P}_{I^{\tilde{n}}}^{r_i} \left(  {\Lambda}_{i} \right) 
\end{equation*} 
for $i=1,2$, such that 
(via~\eqref{eq:aux_1} and~\lemref{lem:ineqs}) 
\begin{equation}
\sum_{i=1,2}\int_0^{\tf} \left\| w_{i} - \hw_{\Gamma ,i} \right\|_\Gamma^2 \, dt \leq C\left(\Delta t^{2r_1+2} +\Delta t^{2r_2+2}\right)\sum_{i=1,2}\left\| u_i (t^{\tilde{n}}) - U_i^{M_i} \right\|^2 .
    \label{eq:assumption_1} 
\end{equation}
Also, let $\tw_i\in\mathcal{C}\left( [0,\tf] ,U_i\right)$ 
be globally-continuous interpolating polynomials for $w_i$, $i=1,2$, 
such that $\tw_i\in \mathcal{U}_i^n (q+1-n_s)$ for $1\leq n\leq M_i$ on 
all coupling windows, and 
satisfying (use~\eqref{eq:aux_1}) 
\begin{equation}
\sum_{i=1,2} \sum_{m=0}^1 \int_0^{\tf} \left\| D^{(m)}(w_{i} - \tw_{i}) \right\|^2 \, dt \leq C\Delta t^{2q+2-2n_s} \sum_{i=1,2}\left\| u_i (t^{\tilde{n}}) - U_i^{M_i} \right\|^2 .
    \label{eq:assumption_2} 
\end{equation}
Under the conditions of~\thmref{thm:L2},~\lemref{lem:aux_bounds}, and the initialization assumption 
\begin{multline}
    \left\{ \sum_{i=1,2} \left\| u_i (t^{N_0}) - U_i^{M_i} \right\|^2 \right\}^{1/2} \\ 
    \leq C \left( \Delta t^{q+1-n_s} +\sum_{i=1,2} \Delta t^{r_i+1}\right)\left( \Delta t^p +\Delta t^{q+1} +\sum_{i=1,2} \Delta t^{r_i+1}\right) 
    \label{eq:assumption_3} 
\end{multline} 
(note $U_i^{M_i}$ is the side value at time $t^{N_0}$ in~\eqref{eq:assumption_3} only; 
elsewhere it is for time $t^{\tilde{n}}$),  
the side value errors at the synchronization time $t^{\tilde{n}}$ satisfy 
\begin{multline}
    \left\{ \sum_{i=1,2} \left\| u_i (t^{\tilde{n}}) - U_i^{M_i} \right\|^2 \right\}^{1/2} \\ 
    \leq C \left( \Delta t^{q+1-n_s} +\sum_{i=1,2} \Delta t^{r_i+1}\right)\left( \Delta t^p +\Delta t^{q+1} +\sum_{i=1,2} \Delta t^{r_i+1}\right) , 
\label{eq:sync_rates}  
\end{multline}
for any $\tilde{n}$, $N_0 \leq \tilde{n} \leq N$.  
\end{thm}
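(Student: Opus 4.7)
The plan is to perform a duality argument in the spirit of~\cite{DelDub1986}, extended to accommodate the additional coupling-window scale in the multirate setting. Fix a coupling window $\tilde{n}$ with $N_0\leq \tilde{n}\leq N$ and denote the squared synchronization error by
\[
E_{\tilde{n}}^2 := \sum_{i=1,2} \left\| u_i (t^{\tilde{n}}) - U_i^{M_i} \right\|^2 .
\]
By the terminal condition~\eqref{eq:semi_3b} of the adjoint problem, one has $E_{\tilde{n}}^2 = \sum_{i} (w_i(t^{\tilde{n}}), u_i(t^{\tilde{n}}) - U_i^{M_i})$. First I would rewrite this quantity as a space--time integral over $[0,t^{\tilde{n}}]$ by summing the \DGIT substep relations~\eqref{eq:DG_1a} for $u_i^n$ tested against $\tw_i$, subtracting the semi-discrete equation~\eqref{eq:semi_1a} for $u_i$ tested against the same $\tw_i$, and handling jumps at side times with the standard \DGIT summation-by-parts. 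This is valid precisely because $\tw_i|_{I_i^n}\in\mathcal{U}_i^n(q+1-n_s)$ lies in the proper test space on every substep. The resulting identity expresses $E_{\tilde{n}}^2$ as a sum of two kinds of residuals: interior time-integration residuals involving $w_i-\tw_i$ and its time derivative, plus coupling residuals carrying the interface traces.

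Next I would process the coupling contributions window by window. On each prior window $\tilde{k}\leq \tilde{n}$, I would insert $\whL_{i}^{\tilde{k}}$ as a test function in the discrete flux equation~\eqref{eq:DG_1d} and $\hw_{\Gamma ,i}^{\tilde{k}}$ in the projection equation~\eqref{eq:DG_1c}, then exploit the $L^2$-orthogonality of these projections onto $\mathcal{V}^{\tilde{k}}(r_i)$ to cancel with the polynomial-in-time pieces of $F_i^{\tilde{k}}$ and of $u_i^n|_{\Gamma}$. What survives are residuals of the form $(F_i^{\tilde{k}},\, w_i|_\Gamma - \hw_{\Gamma ,i}^{\tilde{k}})_\Gamma$ and $(u_i^n|_\Gamma - u_{\Gamma,i}^{\tilde{k}},\, \Lambda_i - \whL_{i}^{\tilde{k}})_\Gamma$, to which~\eqref{eq:assumption_1} and an analogous $O(\Delta t^{r_i+1})$ bound on $\Lambda_i-\whL_i^{\tilde{k}}$ (which follows from~\lemref{lem:aux_bounds} together with~\eqref{eq:semi_3e}) apply. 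At this stage the right-hand side is a sum of pairings of state/flux errors (controlled in $L^2$ by~\thmref{thm:L2}) against dual-approximation errors of order $\Delta t^{q+1-n_s}$ in the interior and $\Delta t^{r_i+1}$ on the coupling window.

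I would then apply Cauchy--Schwarz in space--time on each residual. The primal factors contribute $\Delta t^p+\Delta t^{q+1}+\sum_i \Delta t^{r_i+1}$ by~\thmref{thm:L2}, while~\lemref{lem:aux_bounds} bounds every norm of $w_i$ and $\Lambda_i$ on the dual side by a multiple of $E_{\tilde{n}}$. Dividing through by $E_{\tilde{n}}$ yields the asserted estimate~\eqref{eq:sync_rates} on window $\tilde{n}$, up to contributions carried in from earlier synchronization times via the initial-jump term of the dual identity; these are folded into an induction over $\tilde{n}$ with base case~\eqref{eq:assumption_3} at $\tilde{n}=N_0$. A secondary technical point is to verify that the side-condition residuals behave as in~\eqref{eq:side_value_assumption}, so that the jump terms in the \DGIT summation-by-parts match the structure expected from~\cite{DelDub1986}.

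The main obstacle will be executing this induction without the constant $C$ growing like $N=\tf/\Delta t$. Each window contributes a factor bounded by $1+C\Delta t$ once the initial-jump residual is reabsorbed into $E_{\tilde{n}-1}$, and the natural device is a discrete Gronwall inequality over $\tilde{n}$; this forces the smallness assumption on $\Delta t_1$ and $\Delta t_2$ declared in the theorem. A delicate parallel step is the construction of $\tw_i$ satisfying both global continuity and the reduced-order constraint $\tw_i|_{I_i^n}\in\mathcal{U}_i^n(q+1-n_s)$ while meeting~\eqref{eq:assumption_2}: this is standard for single-rate \DGIT but must be done here on the non-uniform union of substeps from both subdomains, and the resulting interpolation error estimate is what ultimately fixes the $\Delta t^{q+1-n_s}$ rate on the dual side. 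With these pieces in place, Gronwall yields a constant $C$ depending only on $\tf$, $N_0$, the data, and possibly $h$, completing the argument.
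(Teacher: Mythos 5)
Your core strategy coincides with the paper's proof: a global-in-time duality argument against the adjoint system~\eqref{eq:semi_3a}--\eqref{eq:semi_3e}, testing the error equations with the continuous interpolant $\tw_i$, splitting the coupling residual into the three pieces $(EF_i,\tw_i-w_i)_\Gamma$, $(EF_i,w_i-\hw_{\Gamma,i})_\Gamma$ and $(EF_i,\hw_{\Gamma,i})_\Gamma$, converting the last piece via the transposed Robin coefficients and the projection identity~\eqref{eq:DG_1c} into pairings against $\whL_i-\Lambda_i$, and closing with Cauchy--Schwarz, \lemref{lem:aux_bounds}, \thmref{thm:L2} and division by $E_{\tilde{n}}$. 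The one place you diverge is the final step: the paper does not run an induction over coupling windows or invoke a discrete Gronwall inequality here. Because the dual problem is posed once, backward from the fixed target time $t^{\tilde{n}}$, and $\tw_i$ is globally continuous, the substep identities telescope over all windows $N_0\le\tilde{k}\le\tilde{n}$ in a single summation, leaving only the one initialization term $\left(E_i^{N_0-1},\tw_i\left(t^{N_0-1}\right)\right)$; the accumulation over windows is absorbed by the dual stability bounds of \lemref{lem:aux_bounds}, every surviving term on the right carries a factor of $E_{\tilde{n}}$, and one divides through once. Your proposed recursion of the form $E_{\tilde{n}}\le(1+C\Delta t)E_{\tilde{n}-1}+R_{\tilde{n}}$ does not arise naturally from this identity: it would require a window-local dual problem with no obvious terminal data, and a naive summation of per-window residuals risks losing the product structure $\Delta t^{q+1-n_s}\times\left(\Delta t^{p}+\Delta t^{q+1}+\sum_i\Delta t^{r_i+1}\right)$ that delivers the superconvergence. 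Relatedly, the smallness condition on $\Delta t_i$ is inherited from the hypotheses of \thmref{thm:L2} (see~\eqref{eq:L2_stepsize}) and the existence theory, not generated by a Gronwall step inside this proof.
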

The proof is shown in Appendix~\ref{proof:thm:nodal}.  

Consider applying the convergence results~\thmref{thm:L2}-\thmref{thm:nodal} to 
the multirate Crank-Nicolson method discussed earlier.  The terms $\Delta t^p$ 
may be ignored, since no initializations are needed for this one-step method.  
Since $q=1$ and $n_s=2=q+1$, 
one could choose $r_i=1$ for $i=1,2$ and achieve convergence rates of 
$\Delta t^2$ in both $L^2$-norms and at the individual synchronization times.  
However, other methods that have a~\DGIT derivation with $n_s <q+1$ may have 
convergence rates at synchronization times higher than their $L^2$ rates.

\section{Summary discussion and future work}\label{sec:summary} 
A framework was proposed for the derivation of multirate time stepping 
algorithms applied toward interface-coupled dissipative systems.  
The problem is to allow numerical time integrators with different 
step sizes on each of two domains. 
In turn, boundary conditions at the interface require computed 
states to be connected across time levels, which can introduce 
a list of challenges for the design of algorithms.  The proposed 
framework represents the component time integrators as 
discontinuous-Galerkin-in-time (\DGIT) methods.  This approach 
introduces a discrete variational structure in time that is exploited 
to couple the integrators via an auxiliary system of 
interface conditions, posed over common intervals of time we 
call {\it coupling windows}.  These conditions use a polynomial 
least-squares approach to project the discontinuous 
state and flux information on the interface from each domain 
into a smoothed space on the window, where the coupling is 
enforced discretely.  

This approach was shown to provide a way to simultaneously 
address multiple challenges for the design of such multirate 
algorithms.  In principle, methods of any order of consistency 
can be derived that enforce conservation of fluxes passed 
between the domains, in an appropriate discrete sense in time, 
together with the correct behavior of the interfacial energy 
attributed to the governing system.  This latter property is 
desirable to avoid a negative impact on the overall dissipative 
behavior of the global system, which could manifest as noise, 
or artificial sensitivity, or even formal instability.  

We have focused on the monolithic system; its 
presentation and rigorous analysis.  The monolithic 
system is implicitly coupled on each window, so for 
implementation one could consider Schur-complement 
approaches to solve the system; see~\secref{sec:DAEschur} for 
some discussion.  In order to illustrate the analytical principles, 
we have proved the existence of discrete solutions for 
the~\DGIT multirate framework and the order of convergence 
in time in the case of a general class of Robin-type 
interface conditions.  Convergence rates were shown to be 
dependent optimally with respect to the underlying 
choice of~\DGIT time integrators, plus an optimal-order 
error contribution from the coupling relative to the 
coarser scaling of the coupling window.  

Only a first step has been taken to introduce 
the multirate~\DGIT framework.  One intention is to 
provide guiding principles to derive multirate coupled 
algorithms, first by identifying a monolithic method 
with the desired properties.  As we showed with an 
example of monolithic, multirate Crank-Nicolson, 
high-order methods with unconditional stability, flux 
conservation and the correct interfacial energy 
behavior can be constructed in the proposed framework.  
In forthcoming work, we will fill in details for the 
derivation of more example methods based on popular 
underlying time integrators, along with the extension 
to problems with different interface conditions.  
These methods will benefit from rather 
free choices for time integrators, time steps, and coupling window 
sizes as compared with past methods.  

We will also discuss 
various solution strategies with computational examples. 
Schur-complement techniques provide the strongest 
overall stability, but may be too expensive or 
cumbersome for some applications.  More classical 
semi-implicit or explicit partitioned techniques will 
also be discussed for comparison.  These may still 
be derived from the~\DGIT framework using additional 
approximations, but in a way that retains desirable 
properties like order of consistency and flux 
conservation.  
Furthermore, the smoothed coupling space may be chosen to 
reduce the total dimensionality of the coupling terms 
(project piecewise discontinuous on multiple intervals 
to continuous on one larger interval), which might be 
used to reduce coupling costs for partitioned strategies, 
perhaps including communication costs in parallel 
configurations.   The smoothing aspect might also 
be used to mitigate artificial sensitivities.  

Finally, we emphasize the theoretical use of the 
monolithic methods, which serves both as a stepping 
stone toward future partitioned methods, but also 
for analysis of multirate coupling algorithms.  
For example, in~\cite{CD2019} the authors introduced 
a theoretical, monolithic multirate method for the 
purpose of analyzing a partitioned method for 
coupled fluid calculations.  It was shown that 
the partitioned methods may be viewed as 
approximations of the monolithic methods in order to 
understand the properties.

\bibliographystyle{siamplain}
\bibliography{refs}

\appendix
\section{Proof of Lemma~\ref{lem:side_conditions}}
\label{proof:lem:side_conditions}
\begin{proof}
Apply the change of variables
$t=t_i^{n-1} +\Delta t_i (1+\hat{t})/2$ and define
$\sqrt{2}\hat{v}(\hat{t})=\sqrt{\Delta t_i} v_i (t)$.
Note that
\[
\llVert \hat{\mathcal{J}} (\hat{v}) \rrVert
= \llVert \mathcal{J}_i^{n} (v_i) \rrVert
\quad \text{and} \quad 
\left\{ \int_{-1}^1 \| \hat{v} \|^2 \, d\hat{t} \right\}^{1/2}
= \llVert v_i \rrVert, 
\]
where
\[
 \llVert \hat{\mathcal{J}} (\hat{v}) \rrVert
:= 
\left\{ \int_{-1}^1 \left\| \mathcal{P}_{(-1,1)}^{q-n_s}(\hat{v}) \right\|^2 \, d\hat{t} + 2\sum_{j=1}^{n_s} \left\| \hat{v} |_{\hat{t}=2\theta_j -1} \right\|^2  \right\}^{1/2} .
\]
It suffices to restrict our attention
to the case that $I_i^{n}=(-1,1)$, and let
$\hat{\mathcal{J}}:=\mathcal{J}_i^{n}$.  Clearly, the dimensions
of $\mathcal{U}_i^n (q)$ and
$\mathcal{U}_i^n ({q-n_s})\times \mathcal{W}_i$
are the same, so bijectivity of $\hat{\mathcal{J}}$ is equivalent
to showing that the kernel of $\hat{\mathcal{J}}$ is trivial.
Each $\hat{v}$ may be written in the form
$\hat{v} = \sum_{j=0}^q \psi_j \hat{v}_j$, $\hat{v}_j \in U_i$ for $0\leq j\leq q$,
with projection
$\mathcal{P}_{(-1,1)}^{q-n_s} (\hat{v}) =\sum_{j=0}^{q-n_s} \psi_j \hat{v}_j$
($\mathcal{P}_{(-1,1)}^{q-n_s} (\hat{v}) =0$, if $n_s=q+1$).
If $\hat{\mathcal{J}} (\hat{v})=0$ then
$\sum_{j=0}^{q-n_s} \psi_j \hat{v}_j=0$.  Also,
$\sum_{k=q-n_s+1}^{q} \psi_k (2\theta_j-1)\hat{v}_k =0$
for $1\leq j\leq n_s$.  We may conclude that $\hat{v} =0$
if and only if $\tilde{\mathbb{D}}$ (see~\eqref{eq:D_tilde}) is
nonsingular.  It remains to show~\eqref{eq:J_trip_norm_equiv}.
The values of $C_1$ and $C_2$ can be determined from the scalar case
$d_{\Omega_i}=1$; since $\llVert \hat{\mathcal{J}} (\hat{v}) \rrVert$
and $\llVert \hat{v} \rrVert$ derive from inner-products, an
orthonormal basis of $U_i$ can be used to show the constants do not
depend on $d_{\Omega_i}$.
Equivalence of norms on $\mathbb{P}_q (-1,1)$ yields the existence
of the constants, which are therefore also independent of $\Delta t_i$.
\end{proof}
\section{Proof of Lemma~\ref{lem:iter_existence}}
\label{proof:lem:iter_existence}
\begin{proof}
It suffices to show 
that~\eqref{eq:iter_side_conditions}-\eqref{eq:exist_1} represents a 
nonsingular linear system for $u_{i(m)}^{n}$ and 
$U_{i(m)}^{n}$ for any $n$ and $m$, since the remaining variables are then well-defined 
through~\eqref{eq:exist_2}-\eqref{eq:exist_3}.  However, we must prove 
sequentially the cases 
$n=1,2,\ldots , M_i$.  That is, given $m$, assume we have 
shown the existence of $u_{i(m)}^{k}$ and 
$U_{i(m)}^{k}$ for $1\leq k<n$ on coupling window $\tilde{n}$.  
Then we show existence for case $k=n$ must hold as well.  
Together,~\eqref{eq:iter_side_conditions}-\eqref{eq:exist_1} 
provide $(q+2)d_{\Omega_i}$ linear equations to solve for an equal 
number of unknowns, using bases of the spaces $U_i$ and 
$\mathcal{U}_i^n ({q+1-{n_s}})$.  Let us verify that this 
system is nonsingular.  The homogeneous form is 
\begin{align}
    u_{i(m)}^{n} (t_{i}^{n,k}) 
    &=  \left(\mathbb{D}\right)_{k,1} U_{i(m)}^{n} 
    , \ 
    1\leq k \leq n_s, 
   \label{eq:hom_side_conditions} \\
\left( U_{i(m)}^{n},v_i^{-} \left(t_i^{n}\right) \right) -\int_{I_i^n} \left( u_{i(m)}^n , \dot{v}_i \right) \, dt &=0 \label{eq:hom_exist_1}
\end{align} 
for all $v_i\in \mathcal{U}_i^n ({q+1-{n_s}})$.   
First, take $v_i=U_{i(m)}^{n}$ on $I_i^{n}$ 
in~\eqref{eq:hom_exist_1} to see that $U_{i(m)}^{n}=0$.  Notationally, 
if $0\leq n_s \leq q$, let 
\[
\Phi_i^{n} := \mathcal{P}_{I_i^{n}}^{q-n_s} (u_{i(m)}^{n}) .
\] 
In case $n_s=q+1$ just define 
$\Phi_i^{n} :=0$, as per~\defref{defn:isomorphism}. 
By~\lemref{lem:side_conditions} 
and~\eqref{eq:hom_side_conditions}, we will have 
$u_{i(m)}^{n}=0$ if $\Phi_i^{n} =0$.  

If $n_s=q+1$ the proof is done, so now let $0\leq n_s\leq q$.  
Choose $v_i = (t_i^{n}-t)\Phi_i^{n}$ on 
$I_i^{n}$.  Insert 
$v_i$ in~\eqref{eq:hom_exist_1}.  Since 
$\dot{v}_i\in \mathcal{U}_i^n ({q-{n_s}})$, 
\begin{multline*}
    0 =-\int_{I_i^n} \left( u_{i(m)}^{n} , \dot{v}_i \right) \, dt
    =-\int_{I_i^n} \left( \Phi_i^{n} , \dot{v}_i \right) \, dt \\ 
    =\llVert \Phi_i^{n} \rrVert^2 
    -\frac{1}{2} \int_{I_i^{n}} (t_i^{n}-t) \frac{d}{dt} \left\| \Phi_i^{n} \right\|^2 \, dt \\ 
    =\frac{1}{2}\llVert \Phi_i^{n} \rrVert^2 
    +\frac{\Delta t_i}{2} \left\| \Phi_i^{n} (t_i^{n-1}) \right\|^2 .
\end{multline*} 
We see that $\Phi_i^{n}=0$, as required.    
\end{proof}
\section {Proof of Theorem~\ref{thm:existence}}
\label{proof:thm:existence}
\begin{proof}
Define 
$\Delta u_{i(m)}^{n} :=u_{i(m)}^{n}-u_{i(m-1)}^{n}$, 
with analogous 
$\Delta$-notation for the other variables.  Let $C>0$ be a generic 
constant; the value changes throughout the proof.  
Assume existence has already been proved on coupling windows 
$\overline{I^{\tilde{k}}}$ for $1\leq \tilde{k}<\tilde{n}$.  
Let $\tilde{k}=\tilde{n}\geq N_0$ (recall that initial data is 
assumed to be provided already on windows $\tilde{n}\leq N_0$).  
Take the difference between~\eqref{eq:exist_1} in cases $m-1$ and 
$m$, then insert $v_i=\Delta U_{i(m)}^{n}$.    
We then apply~\lemref{lem:ineqs} to show that 
\begin{multline*}
  \left\|\Delta U_{i(m)}^{n} \right\| 
   \leq   \left\|\Delta U_{i(m)}^{n-1} \right\|
   +Ch^{-1/2}\sqrt{\Delta t_i \int_{I_i^n} \left\| \Delta F^{\tilde{n}}_{i(m-1)} \right\|_\Gamma^2 \, dt}   \\ 
   +Ch^{-2}\sqrt{\Delta t_i}  \llVert \Delta u^n_{i(m-1)} \rrVert
\end{multline*}
Since $U_{i(m)}^{0}=U_{i}^{0}$ is $m$-independent, 
$\Delta U_{i(m)}^{0}=0$ and we sum over $n$ to find 
that 
\begin{multline}
    \left\|\Delta U_{i(m)}^{n} \right\| 
   \leq  
   C\sqrt{\Delta t_i} \sum_{k=1}^n h^{-1/2}\sqrt{\int_{I_i^k} \left\| \Delta F^{\tilde{n}}_{i(m-1)} \right\|_\Gamma^2 \, dt} \\
   +C\sqrt{\Delta t_i} \sum_{k=1}^n h^{-2} \llVert \Delta u_{i(m-1)}^k \rrVert  \label{eq:exist_pr1} 
\end{multline}
holds for $1\leq n\leq M_i$.  By analogy with the proof 
of~\lemref{lem:iter_existence}, if $0\leq n_s \leq q$, let 
\[
\Phi_i^{n} := \mathcal{P}_{I_i^{n}}^{q-n_s} (\Delta u_{i(m)}^{n}) , 
\] 
or $\Phi_i^{n} :=0$ if $n_s=q+1$.  Take the 
difference between~\eqref{eq:exist_1} in cases $m-1$ and 
$m$ again, this time inserting 
$v_i=(t_i^{n}-t)\Delta u_{i(m)}^{n}$.  We find 
that 
\begin{multline*}
    \frac{1}{2} \llVert \Phi_i^{n} \rrVert^2 
    +\frac{\Delta t_i}{2} \left\| \Phi_i^{n} (t_i^{n-1}) \right\|^2 
    \leq \Delta t_i \left\| \Phi_i^{n} (t_i^{n-1}) \right\| \left\|\Delta U_{i(m)}^{n-1} \right\| \\ 
    +C\Delta t_i \llVert \Phi_i^{n} \rrVert 
    \left( h^{-1/2}
    \sqrt{\int_{I_i^n} \left\| \Delta F^{\tilde{n}}_{i(m-1)} \right\|_\Gamma^2 \, dt} +h^{-2} \llVert \Delta u_{i(m-1)}^n \rrVert \right)
\end{multline*} 
We apply Young's inequality to subsume the $\Phi$-terms, then 
insert~\eqref{eq:exist_pr1}.  
After some algebra, 
\begin{multline}
 \llVert \Phi_i^{n} \rrVert^2 
    \leq 
    C\, n\, h^{-1}(\Delta t_i)^2 \sum_{k=1}^n \int_{I_i^k} \left\| \Delta F^{\tilde{n}}_{i(m-1)} \right\|_\Gamma^2 \, dt \\
    +Ch^{-4}(n)(\Delta t_i)^2 \sum_{k=1}^n \llVert \Delta u_{i(m-1)}^k \rrVert^2 .
    \label{eq:exist_pr2} 
\end{multline}
Bounds due to side conditions are needed if $n_s>0$.  
Subtract~\eqref{eq:iter_side_conditions} from itself in cases 
$m$ and $m-1$.  It follows that 
\begin{equation}
    \Delta t_i \sum_{r=0}^{n_s} \left\| \Delta u_{i(m)}^{n} (t_{i}^{n,r}) \right\|^2 
    \leq C 
\Delta t_i \sum_{k=1}^n \left\| \Delta U_{i(m)}^{k} \right\|^2 , 
\label{eq:exist_pr3} 
\end{equation} 
where $C$ depends on the (fixed) entries of $\mathbb{D}$.  
Now insert~\eqref{eq:exist_pr1} in~\eqref{eq:exist_pr3}.  Together 
with~\eqref{eq:exist_pr2} and~\eqref{eq:J_trip_norm_equiv}, 
these result imply 
\begin{align*}
  \llVert \Delta u_{i(m)}^n \rrVert^2   &\leq 
    Ch^{-1}(n)(\Delta t_i)^2 \sum_{k=1}^n \int_{I_i^k} \left\| \Delta F^{\tilde{n}}_{i(m-1)} \right\|_\Gamma^2 \, dt \\ 
    &\quad +Ch^{-4}(n)(\Delta t_i)^2 \sum_{k=1}^n \llVert \Delta u_{i(m-1)}^k \rrVert^2 \\ 
    \Rightarrow 
    \sum_{n=1}^{M_i} \llVert \Delta u_{i(m)}^n \rrVert^2   
    &\leq 
    C\Delta t^2 \left\{  h^{-1}\llVert \Delta F_{i(m-1)}^{\tilde{n}} \rrVert_\Gamma^2 
+h^{-4}\sum_{n=1}^{M_i} \llVert \Delta u_{i(m-1)}^n \rrVert^2 \right\} .
\end{align*} 
Now sum over $i=1,2$.  The flux terms are bounded 
using~\eqref{eq:exist_2}-\eqref{eq:exist_3} 
with~\lemref{lem:ineqs}, reducing the result to: 
\begin{equation*}
\sum_{i=1,2} \sum_{n=1}^{M_i} \llVert \Delta u_{i(m)}^n \rrVert^2 
\leq 
    C\Delta t^2 \left(  h^{-2}+h^{-4}\right) \sum_{i=1,2}
    \sum_{n=1}^{M_i} \llVert \Delta u_{i(m-1)}^n \rrVert^2  .
\end{equation*}
In fact, using~\eqref{eq:exist_pr1} and arguments thereafter again, we 
have 
\begin{multline*}
    \sum_{i=1,2} \sum_{n=1}^{M_i} \left\{ \llVert \Delta u_{i(m)}^n \rrVert^2 
     + \Delta t_i \left\| \Delta U_{i(m)}^{n} \right\|^2 \right\} \\ 
     \leq 
     C(h^{-4}+h^{-2})\Delta t^2 \sum_{i=1,2} \sum_{n=1}^{M_i} \llVert \Delta u_{i(m-1)}^n \rrVert^2 \\
     \leq 
     C(h^{-4}+h^{-2})\Delta t^2 \sum_{i=1,2} \sum_{n=1}^{M_i} \left\{ \llVert \Delta u_{i(m-1)}^n \rrVert^2 
     + \Delta t_i \left\| \Delta U_{i(m-1)}^{n} \right\|^2 \right\} .
\end{multline*}
If we choose $\Delta t$ small enough (with scaling 
$\Delta t=\Order{h^{2}}$ for small $h$), then there is some $\epsilon$ such 
that $0\leq \epsilon <1$ and 
\begin{multline*}
    \sum_{i=1,2} \sum_{n=1}^{M_i} \left\{ \llVert \Delta u_{i(m)}^n \rrVert^2 
     + \Delta t_i \left\| \Delta U_{i(m)}^{n} \right\|^2 \right\} \\ 
     \leq 
     \epsilon \sum_{i=1,2} \sum_{n=1}^{M_i} \left\{ \llVert \Delta u_{i(m-1)}^n \rrVert^2 
     + \Delta t_i \left\| \Delta U_{i(m-1)}^{n} \right\|^2 \right\}
\end{multline*} 
for each $m\geq 1$, yielding a Cauchy sequence for the set of 
states and side values on the coupling window.  
Convergence as $m\to\infty$ 
and the solution existence follows from here by standard 
arguments.  Note that the global 
system~\eqref{eq:side_conditions}-\eqref{eq:DG_1a} 
and~\eqref{eq:DG_1c}-\eqref{eq:DG_1d} 
for states, side-values, traces and fluxes can be posed as one 
square, linear, system on the coupling window; the existence 
result shown implies also the uniqueness of 
the solution for this monolithic problem on each coupling window.  
\end{proof}
\section{Proof of Theorem~\ref{thm:L2}}
\label{proof:thm:L2}
\begin{proof}
Some notations are used to denote differences 
between the solution of the semi-discrete 
problem~\eqref{eq:semi_1a}-\eqref{eq:semi_1d} and 
and the~\DGIT problem~\eqref{eq:side_conditions}-\eqref{eq:DG_1a},~\eqref{eq:DG_1c}-\eqref{eq:DG_1d}.  These are also referenced in other proofs.  
\begin{defn}[Error notation]\label{defn:errors} 
 
Given some $\hu_i^n \in \mathcal{U}_i^n ({q})$ with the interpolatory property 
\begin{equation}
    \hu_i^{n} (t_{i}^{n,k}) = u_i (t_{i}^{n,k}), \quad 1\leq k\leq n_s ,\ 1\leq n\leq M_i , 
    \label{eq:hu_interp}  
\end{equation}
for $N_0\leq \tilde{n}\leq N$, 
we will need to bound the following errors, and their components: 
\begin{equation*}
\begin{array}{l}
e_i^n := u_i - u_i^n = (u_i -\hu_i^n )+ (\hu_i^n-u_i^n) = \eta_i^n + \psi_i^n , \\ 
 E_i^{n} := u_i \left(t_i^{n}\right) - U_i^{n}.
\end{array}
\end{equation*}
We use~\eqref{eq:side_conditions} to decompose 
\begin{equation}
\left.\begin{aligned} 
    \psi_i^{n} (t_{i}^{n,k}) 
    &=\tau_{i}^{n,k}
    + \sum_{l=1}^{k_s+1} (\mathbb{D})_{k,l} E_i^{n+1-l} \\ 
    \tau_{i}^{n,k} &:= 
    u_i (t_{i}^{n,k}) -\sum_{l=1}^{k_s+1} (\mathbb{D})_{k,l} u_i \left(t_i^{n+1-l} \right)
    \end{aligned} \right\} .
    \label{eq:side_value_lte}  
\end{equation} 
Also, we define 
$\hu_{\Gamma ,i}^{\tilde{n}} ,\whF_{i}^{\tilde{n}} \in \mathcal{V}^{\tilde{n}}({r_i})$ to 
satisfy 
\begin{equation}
    \hu_{\Gamma ,i}^{\tilde{n}} := \mathcal{P}_{I^{\tilde{n}}}^{r_i} \left(  u_i|_{\Gamma} \right) , \quad 
    \whF_{i}^{\tilde{n}} := \mathcal{P}_{I^{\tilde{n}}}^{r_i} \left(  F_{i}\right) 
\end{equation} 
for $1\leq n\leq N$ and $i=1,2$, along with error components 
\begin{equation*}
\begin{array}{l}
e_{\Gamma ,i}^{\tilde{n}} := (u_i)|_{\Gamma} - u_{\Gamma ,i}^{\tilde{n}} = ((u_i)|_{\Gamma} -\hu_{\Gamma ,i}^{\tilde{n}})+ (\hu_{\Gamma ,i}^{\tilde{n}}-u_{\Gamma ,i}^{\tilde{n}}) = \eta_{\Gamma ,i}^{\tilde{n}} + \psi_{\Gamma ,i}^{\tilde{n}} , \\ 
EF_{i}^{\tilde{n}} := F_{i} - F_{i}^{\tilde{n}} 
= (F_{i} -\whF_{i}^{\tilde{n}})+ (\whF_{i}^{\tilde{n}}-F_{i}^{\tilde{n}}) = \Xi_{i}^{\tilde{n}} + \Psi_{i}^{\tilde{n}}.
\end{array}
\end{equation*}
\end{defn}

We proceed with the proof.  Given $\tilde{n}\geq N_0$, it follows from~\eqref{eq:DG_1a} that 
\begin{align}
    \left( E_i^{n},v_i^{n} \left(t_i^{n}\right) \right) -\int_{I_i^n} \left( e_i^n , \dot{v}^n_i \right)\, dt &=
    \left( E_i^{n-1}, v_i^{n} \left(t_i^{n-1}\right) \right) \label{eq:L2_2} \\
   - \int_{I_i^n} L\left( e_i^n, v_i^n \right)  +  \left( EF_{i}^{\tilde{n}} , v_i^n \right)_\Gamma \, dt , & \ \forall v_i^n \in \mathcal{U}_i^n ({q+1-{n_s}}) , \nonumber
\end{align}
for $1\leq n\leq M_i$.  
If $0\leq n_s\leq q$, define 
\[
\Phi_i^{n} := \mathcal{P}_{I_i^{n}}^{q-n_s} ( \psi_{i}^{n}), 
\] 
or $\Phi_i^{n} :=0$ if $n_s=q+1$.  If 
$0\leq n_s\leq q$, insert 
\[
v_i^{n}=\int_t^{t_i^{n}}\Phi_i^{n}\, dt  
\]
into~\eqref{eq:L2_2} and apply the bounds 
\[
\llVert v_i^n \rrVert \leq \frac{1}{3}\Delta t_i 
\llVert \Phi_i^{n} \rrVert \quad \text{and} \quad 
\left\| v_i^{n}\left(t_i^{n-1}\right) \right\| 
\leq \sqrt{\Delta t_i} 
\llVert \Phi_i^{n} \rrVert,
\]
from which it follows that 
\begin{equation}    
 \llVert \Phi_i^n \rrVert 
    \leq 
    \sqrt{\Delta t_i} \left\|E_{i}^{n-1} \right\| 
    +\llVert \eta_i^n \rrVert 
    +C\Delta t_i\left( h^{-2}\llVert e_i^n \rrVert
    +h^{-1/2} \llVert EF_{i}^{\tilde{n}} \rrVert_\Gamma \right) .
    \label{eq:L2_3} 
\end{equation} 
We seek to bound $\psi_i^{n}$ using~\lemref{lem:side_conditions}, 
which requires bounds on the truncation errors due to side conditions.  
These are provided by applying~\eqref{eq:side_value_lte} and bounding 
the truncation terms $\tau_{i}^{n,k}$ using~\eqref{eq:side_value_assumption}.  After this, apply~\eqref{eq:L2_3} 
so that 
\begin{align*}
   &\llVert \psi_i^n \rrVert^2 
    \leq C \llVert \Phi_i^{n} \rrVert^2 
    +C \Delta t_i \sum_{k=0}^{k_s}\left\|E_{i}^{n-k} \right\|^2 
    \quad 
    +C (\Delta t_i)^{2q+2} \int_{t_i^{n-k_s}}^{t_i^n} \sum_{m=0}^{q+1} \left\| D^{(m)} u_i \right\|^2\, dt  \\ 
&\leq C \Delta t_i \sum_{k=0}^{k_s}\left\|E_{i}^{n,j-k} \right\|^2 
    \quad 
    +C (\Delta t_i)^{2q+2} \int_{t_i^{n-k_s}}^{t_i^n} \sum_{m=0}^{q+1} \left\| D^{(m)} u_i \right\|^2\, dt 
    + C\llVert \eta_i^n \rrVert^2 \\
    &+ C\Delta t_i^2\left( h^{-4}\llVert e_i^n \rrVert^2
    +h^{-1} \int_{I_i^n} \left\| EF_{i}^{\tilde{n}} \right\|_\Gamma^2 \, dt \right)  
\end{align*}
Bounds for $e_i^n$ and $EF_{i}^{\tilde{n}}$ are needed, but 
this must be done on the entire coupling window due to the latter 
(flux) terms.  First, sum over $n$: 
\begin{align}
   \sum_{n=1}^{M_i} \llVert \psi_i^n \rrVert^2 &\leq C \Delta t_i \sum_{n=1-k_s}^{M_i}\left\| E_{i}^{n} \right\|^2 
    +C (\Delta t_i)^{2q+2} \int_{t_i^{1-k_s}}^{t_i^{M_i}} \sum_{m=0}^{q+1} \left\| D^{(m)} u_i \right\|^2\, dt  \nonumber \\
    &+ C\sum_{n=1}^{M_i} \llVert \eta_i^n \rrVert^2 + C\Delta t_i^2\left( h^{-4}\sum_{n=1}^{M_i} \llVert e_i^n \rrVert^2
    +h^{-1} \llVert EF_{i}^{\tilde{n}} \rrVert_\Gamma^2 . \right)  
    \label{eq:L2_4} 
\end{align}
Insert $v_i^{n}=E_i^{n}$ in~\eqref{eq:L2_2} to derive 
\begin{multline}
    \left\| E_i^{n} \right\| \leq 
    \left\| E_i^{n-1} \right\| 
    +C\sqrt{\Delta t_i}\left(  h^{-2}\llVert e_i^n \rrVert 
    + h^{-1/2} \sqrt{\int_{I_i^n} \left\| EF_{i}^{\tilde{n}} \right\|_\Gamma^2 \, dt} \right)  \\
    \leq 
    \left\| E_i^{0} \right\|
      +C\sqrt{\Delta t_i}\sum_{k=1}^n \left(  h^{-2}\llVert e_i^k \rrVert
    + h^{-1/2} \sqrt{\int_{I_i^k} \left\| EF_{i}^{\tilde{n}} \right\|_\Gamma^2 \, dt} \right).  \label{eq:L2_5} 
\end{multline} 
Notationally, for $i=1,2$ we define numbers 
\[
A_i^{\tilde{k}-1,M_i} := \sum_{n=0}^{M_i-1} \left\| E_i^{n} \right\|^2 
\] 
on coupling windows $1 \leq \tilde{k}\leq N_0-1$ (if $N_0>1$), 
and 
\[
A_i^{\tilde{k},M_i} := \left\| E_i^{M_i} \right\|^2 , 
\] 
on coupling windows $N_0 -1\leq \tilde{k} \leq N$.  
Then apply~\eqref{eq:L2_5} to bound 
\begin{multline*}
    \sum_{n=1-k_s}^{M_i}\left\| E_{i}^{n} \right\|^2 = \left\|E_{i}^{M_i} \right\|^2 
    +\sum_{\tilde{k}=N_0}^{\tilde{n}} \sum_{n=0}^{M_i-1} \left\|E_{i}^{n} \right\|^2 
    +\sum_{\tilde{k}=\tilde{n}-N_0}^{N_0-2} A_i^{\tilde{k},M_i} \\ 
    \leq CM_i \sum_{\tilde{k}=N_0-1}^{\tilde{n}-1} A_i^{\tilde{k},M_i} 
    +\sum_{\tilde{k}=\tilde{n}-N_0}^{N_0-2} A_i^{\tilde{k},M_i} 
    +C\Delta t  \left( h^{-4}\sum_{n=1}^{M_i} \llVert e_i^n \rrVert^2
    +h^{-1} \llVert EF_{i}^{\tilde{n}} \rrVert_\Gamma^2  \right) .
\end{multline*} 
Insert this in~\eqref{eq:L2_4} to reduce: 
\begin{multline}    
    \sum_{n=1}^{M_i} \llVert \psi_i^n \rrVert^2 
    \leq 
    C\Delta t \sum_{\tilde{k}=\tilde{n}-N_0}^{\tilde{n}-1}  A_i^{\tilde{k},M_i} 
    +C (\Delta t_i)^{2q+2} \int_{t_i^{1-k_s}}^{t_i^{M_i}} \sum_{m=0}^{q+1} \left\| D^{(m)} u_i \right\|^2\, dt
    \\
     +C\sum_{n=1}^{M_i} \llVert \eta_i^n \rrVert^2
    +C\Delta t \Delta t_i \left( h^{-4} \sum_{n=1}^{M_i} \llVert e_i^n \rrVert^2
    +h^{-1} \llVert EF_i^{\tilde{n}} \rrVert_\Gamma^2  \right)  .
    \label{eq:L2_6}
\end{multline}

The trace and flux errors satisfy 
\begin{equation}
\begin{array}{c} 
\int_{I^{\tilde{n}}} \left( e_{\Gamma ,i}^{\tilde{n}} ,\lambda^{\tilde{n}} \right)_\Gamma \, dt 
= 
\sum_{n=1}^{M_i} \int_{I_i^n} \left( e_{i}^n  ,\lambda^{\tilde{n}} \right)_\Gamma \, dt ,
\quad \forall \lambda^{\tilde{n}} \in \mathcal{V}^{\tilde{n}} (r_i), \\ 
\int_{I^{\tilde{n}}} \left( EF_i^{\tilde{n}} ,\lambda^{\tilde{n}} \right)_\Gamma \, dt 
= \int_{I^{\tilde{n}}} \left(  b_{i,1} e_{\Gamma ,1}^{\tilde{n}} + b_{i,2} e_{\Gamma ,2}^{\tilde{n}}, \lambda^{\tilde{n}} \right)_\Gamma \, dt, 
\quad \forall \lambda^{\tilde{n}} \in \mathcal{V}^{\tilde{n}} (r_i) . 
\end{array} 
\label{eq:L2_8} 
\end{equation}  
We decompose the interface errors and bound by applying~\eqref{eq:L2_8}, 
such that 
\begin{multline}
    \llVert EF_{i}^{\tilde{n}} \rrVert_\Gamma  
\leq 
\llVert \Xi_{i}^{\tilde{n}} \rrVert_\Gamma + \llVert \Psi_{i}^{\tilde{n}} \rrVert_\Gamma  
\leq 
\llVert \Xi_{i}^{\tilde{n}} \rrVert_\Gamma + C\sum_{i=1,2}\llVert e_{\Gamma ,i}^{\tilde{n}} \rrVert_\Gamma
\\ 
\leq 
\llVert \Xi_{i}^{\tilde{n}} \rrVert_\Gamma +
\sum_{i=1,2} \left( 
\llVert \eta_{\Gamma ,i}^{\tilde{n}} \rrVert_\Gamma +
Ch^{-1/2}\sqrt{\sum_{n=1}^{M_i} \int_{I_i^n} \left\| e_{i}^n \right\|_\Gamma^2 \, dt} \right) 
\label{eq:L2_9}
\end{multline}
Insert~\eqref{eq:L2_9} in~\eqref{eq:L2_6} 
for the flux errors.  Then apply the triangle inequality for 
$e_i=\eta_i +\psi_i$ and sum over $i=1,2$.  The 
result implies  
\begin{multline}    
    \sum_{i=1,2}\sum_{n=1}^{M_i}\llVert \psi_i^n \rrVert^2 
    \leq C\Delta t\sum_{i=1,2}\Delta t_i h^{-1}\left( 
    \llVert \Xi_{i}^{\tilde{n}} \rrVert_\Gamma^2
    + \llVert \eta_{\Gamma ,i}^{\tilde{n}} \rrVert_\Gamma^2 \right) \\ 
    +C\Delta t \sum_{i=1,2}\sum_{\tilde{k}=\tilde{n}-N_0}^{\tilde{n}-1}A_i^{\tilde{k},M_i} 
    +C\sum_{i=1,2} (\Delta t_i)^{2q+2} \int_{t_i^{1-k_s}}^{t_i^{M_i}} \sum_{m=0}^{q+1} \left\| D^{(m)} u_i \right\|^2\, dt \\
     +C\sum_{i=1,2}\left(1+\Delta t\Delta t_i(h^{-4}+h^{-2}) \right)\sum_{n=1}^{M_i}\llVert \eta_i^n \rrVert^2 \\ 
     + C\Delta t\sum_{i=1,2}\Delta t_i(h^{-4}+h^{-2})\sum_{n=1}^{M_i}\llVert \psi_i^n \rrVert^2 .
     \label{eq:L2_9b} 
\end{multline}
Interpolatory estimates are needed: 
\begin{align}
    \llVert \eta_i^n \rrVert &\leq C\Delta t_i^{q+1} 
    \sqrt{\sum_{m=0}^{q+1} \int_{I_i^n} \left\| D^{(m)} u_i \right\|^2 \, dt} , \nonumber \\ 
    \llVert \eta_{\Gamma ,i}^{\tilde{n}} \rrVert_\Gamma &\leq C\Delta t^{r_i+1} 
    \sqrt{\sum_{m=0}^{r_i+1} \int_{I^{\tilde{n}}} \left\| D^{(m)} u_i \right\|_{\Gamma}^2 \, dt}, \label{eq:L2_reg} \\ 
    \llVert \Xi_{\Gamma ,i}^{\tilde{n}} \rrVert_\Gamma &\leq C\Delta t^{r_i+1} 
    \sqrt{\sum_{m=0}^{r_i+1} \int_{I^{\tilde{n}}} \left\| D^{(m)} F_i \right\|_{\Gamma}^2 \, dt}. \nonumber 
\end{align} 
Here, $C>0$ is fixed and independent of any discretization parameters.  These 
bounds follow from the solution regularity~\eqref{eq:solution_bounds} using 
standard results for pointwise interpolants and approximations by 
orthogonal polynomials; see~\eg~\cite{BS2008,Ciarlet2002,StoerBulirsch2002}. Note 
that the interpolation points $t_i^{n,k}$ in~\eqref{eq:hu_interp} may be supplemented 
with $q+1-n_s$ more points to uniquely define an appropriate $\hu_i^n\in\mathcal{U}_i^n(q)$ with the desired properties.    

We apply a time step restriction 
\begin{equation}
    C\Delta t\Delta t_i (h^{-4}+h^{-2}+h^{-1}) \leq \frac{1}{2} 
    \label{eq:L2_stepsize} 
\end{equation}
after which the 
$\psi_i^n$-terms on the right side of~\eqref{eq:L2_9b} may be subsumed, and the 
result may be reduced also using~\eqref{eq:L2_reg} to show that   
\begin{multline}    
    \sum_{i=1,2}\sum_{n=1}^{M_i}\llVert \psi_i^n \rrVert^2 
    \leq 
    C\Delta t \sum_{i=1,2}\sum_{\tilde{k}=\tilde{n}-N_0}^{n-1}A_i^{\tilde{k},M_i} \\ 
    +C\sum_{i=1,2} (\Delta t_i)^{2q+2} \int_{t_i^{1-k_s}}^{t_i^{M_i}} \sum_{m=0}^{q+1} \left\| D^{(m)} u_i \right\|^2\, dt \\ 
+ C\sum_{i=1,2}  \Delta t^{2r_i+2} \left( {\sum_{m=0}^{r_i+1}\int_{t^{\tilde{n}-1}}^{t^{\tilde{n}}}\left\| D^{(m)}u_{i} \right\|_{\Gamma}^2\, dt} + {\sum_{m=0}^{r_i+1}\int_{t^{\tilde{n}-1}}^{t^{\tilde{n}}}\left\| D^{(m)}F_{i} \right\|_{\Gamma}^2\, dt} \right) .
    \label{eq:L2_10}
\end{multline} 
Next, from~\eqref{eq:L2_5} with $n=M_i$ we have 
\begin{equation*}
A_{\tilde{n}} := \sum_{i=1,2}   \left\| E_{i}^{M_i} \right\| 
\leq 
   A_{\tilde{n}-1} +C\sqrt{\Delta t}\left( h^{-2}\sqrt{\sum_{n=1}^{M_i}\llVert e_i^{n} \rrVert^2}+ h^{-1/2} \llVert EF_{i}^{\tilde{n}} \rrVert_\Gamma \right) , 
\end{equation*} 
for all $\tilde{n}\geq N_0$.  In case $0\leq \tilde{n}<N_0$, define 
\[
A_{\tilde{n}} := \sqrt{\sum_{i=1,2} A_i^{\tilde{n},M_i}} .
\]
Apply the triangle inequality with $e_i^n=\eta_i^n+\psi_i^n$, 
use~\eqref{eq:L2_reg},~\eqref{eq:L2_9} and~\eqref{eq:L2_10} to derive 
\begin{align}
    &A_{\tilde{n}} \leq A_{\tilde{n}-1}     
    +C_h\Delta t\sum_{\tilde{k}=1}^{N_0}A_{\tilde{n}-\tilde{k}} 
    +C_h \sqrt{\Delta t}B_{\tilde{n}}, \label{eq:L2_11} \\ 
    &B_{\tilde{n}} := \sum_{i=1,2} \Delta t_i^{q+1} \sqrt{\int_{t^{\tilde{n}+1-N_0}}^{t^{\tilde{n}}} \sum_{m=0}^{q+1}\left\| D^{(m)} u_i \right\|^2\, dt} \nonumber \\ 
&\qquad  + \sum_{i=1,2}  \Delta t^{r_i+1} \left( \sqrt{\sum_{m=0}^{r_i+1}\int_{t^{\tilde{n}-1}}^{t^{\tilde{n}}}\left\| D^{(m)}u_{i} \right\|_{\Gamma}^2\, dt} + \sqrt{\sum_{m=0}^{r_i+1}\int_{t^{\tilde{n}-1}}^{t^{\tilde{n}}}\left\| D^{(m)}F_{i} \right\|_{\Gamma}^2\, dt} \right) \nonumber 
\end{align}
for $C_h=\Order{h^{-2}+h^{-1}+h^{-1/2}}$ and $\tilde{n}\geq N_0$.  
The following result is used to bound $A_{\tilde{n}}$.  
\begin{lem}[Sequence bound]\label{lem:sequence} 
Let $\{A_{\tilde{n}}\}_{\tilde{n}}$ and $\{B_{\tilde{n}}\}_{\tilde{n}}$, $1\leq \tilde{n} \leq N$ be sequences of 
non-negative real numbers.  Assume that $C_1$, $C_2$ and $\Delta t$ are 
positive constants, and $N_0\in\nats$.  If 
\[
A_{\tilde{n}} \leq A_{\tilde{n}-1} +C_1\Delta t\sum_{\tilde{k}=1}^{N_0} A_{\tilde{n}-\tilde{k}} 
+C_2\sqrt{\Delta t}B_{\tilde{n}}, \quad N_0 \leq \tilde{n} \leq N, 
\] 
then 
\[
A_{\tilde{n}} \leq \tilde{A} (1+N_0 C_1\Delta t)^{\tilde{n}+1-N_0} 
+C_2\sqrt{\Delta t} \sum_{\tilde{k}=N_0}^{\tilde{n}} B_{\tilde{k}} (1+N_0 C_1\Delta t)^{\tilde{n}-\tilde{k}} ,  
\] 
for $N_0 \leq \tilde{n} \leq N$,where 
$\tilde{A}:= \max\{ A_0 , \ldots , A_{N_0-1} \}$.  
\end{lem}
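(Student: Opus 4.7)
My plan is to reduce the multi-step recursion to a standard single-step Gronwall-type inequality by passing to a running-maximum auxiliary sequence. Define
\[
\tilde{A}_{\tilde{n}} := \max_{0\leq \tilde{k}\leq \tilde{n}} A_{\tilde{k}}, \quad \tilde{n}\geq 0,
\]
so that in particular $\tilde{A}_{N_0-1}=\tilde{A}$ in the notation of the lemma. The sequence $\{\tilde{A}_{\tilde{n}}\}$ is non-decreasing, which is the feature that makes the reduction work.

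Next, for any $\tilde{n}\geq N_0$, I would use the hypothesis together with the trivial bound $A_{\tilde{n}-\tilde{k}}\leq \tilde{A}_{\tilde{n}-1}$ for $1\leq \tilde{k}\leq N_0$ (the indices $\tilde{n}-\tilde{k}$ lie in $\{\tilde{n}-N_0,\ldots,\tilde{n}-1\}\subset\{0,\ldots,\tilde{n}-1\}$). Summing the $N_0$ identical bounds inside the sum yields
\[
A_{\tilde{n}} \leq \tilde{A}_{\tilde{n}-1} + C_1\Delta t\cdot N_0\tilde{A}_{\tilde{n}-1} + C_2\sqrt{\Delta t}\,B_{\tilde{n}} = (1+N_0C_1\Delta t)\tilde{A}_{\tilde{n}-1} + C_2\sqrt{\Delta t}\,B_{\tilde{n}}.
\]
Since $1+N_0C_1\Delta t\geq 1$ and $B_{\tilde{n}}\geq 0$, the right-hand side also majorizes $\tilde{A}_{\tilde{n}-1}$, so $\tilde{A}_{\tilde{n}}=\max\{A_{\tilde{n}},\tilde{A}_{\tilde{n}-1}\}$ satisfies the same bound:
\[
\tilde{A}_{\tilde{n}} \leq (1+N_0C_1\Delta t)\tilde{A}_{\tilde{n}-1} + C_2\sqrt{\Delta t}\,B_{\tilde{n}}, \quad \tilde{n}\geq N_0.
\]

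From here I would iterate this single-step linear recursion from $\tilde{n}$ back down to $N_0$, which is a routine discrete variation-of-constants calculation producing
\[
\tilde{A}_{\tilde{n}} \leq (1+N_0C_1\Delta t)^{\tilde{n}+1-N_0}\tilde{A}_{N_0-1} + C_2\sqrt{\Delta t}\sum_{\tilde{k}=N_0}^{\tilde{n}} B_{\tilde{k}}(1+N_0C_1\Delta t)^{\tilde{n}-\tilde{k}}.
\]
The claim follows by recognizing $\tilde{A}_{N_0-1}=\tilde{A}$ and using $A_{\tilde{n}}\leq\tilde{A}_{\tilde{n}}$. There is no real obstacle here; the only subtlety worth flagging is the multi-step character of the hypothesis, which the running-maximum device collapses to a single-step recursion with effective per-step growth factor $1+N_0C_1\Delta t$ (the factor $N_0$ appearing naturally from the sum length).
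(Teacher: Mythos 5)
Your argument is correct. The paper does not actually prove this lemma; it simply cites Lemma A2.2 of Delfour and Dubeau \cite{DelDub1986}, so there is no in-paper proof to compare against step by step. Your running-maximum device is a clean, self-contained way to handle the multi-step character of the hypothesis: bounding $A_{\tilde{n}-1}$ and each of the $N_0$ terms $A_{\tilde{n}-\tilde{k}}$ by the non-decreasing majorant $\tilde{A}_{\tilde{n}-1}$ collapses the recursion to the single-step form $\tilde{A}_{\tilde{n}} \leq (1+N_0C_1\Delta t)\tilde{A}_{\tilde{n}-1} + C_2\sqrt{\Delta t}\,B_{\tilde{n}}$, and the observation that the right-hand side also dominates $\tilde{A}_{\tilde{n}-1}$ (so the bound transfers from $A_{\tilde{n}}$ to $\max\{A_{\tilde{n}},\tilde{A}_{\tilde{n}-1}\}$) is exactly the point that needs to be made explicit. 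The unrolled recursion then gives the stated exponent $\tilde{n}+1-N_0$ after $\tilde{n}-N_0+1$ iterations from the base value $\tilde{A}_{N_0-1}=\tilde{A}$, and $A_{\tilde{n}}\leq\tilde{A}_{\tilde{n}}$ finishes the claim. The only cosmetic point worth noting is that the lemma as stated indexes the sequences from $\tilde{n}=1$ while $\tilde{A}$ involves $A_0$; your definition of the running maximum starting at index $0$ is consistent with the intended meaning and does not introduce any gap.
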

\begin{proof}
See~\cite{DelDub1986}, Lemma A2.2, for details.  
\end{proof} 
Apply~\lemref{lem:sequence} to~\eqref{eq:L2_11} along 
with~\eqref{eq:init_accuracy1}-\eqref{eq:init_accuracy2}.  
After this, use  
$(1+N_0C_h\Delta t)^{\tilde{n}} \leq e^{C_h N_0{\tilde{n}}\Delta t}\leq e^{N_0C_h\tf}$, as well as  
\begin{multline*} 
\Delta t^{n_1} C_h\sqrt{\Delta t}e^{C_h \tilde{n}\Delta t} \sqrt{\sum_{m=0}^{n_1}\int_{t^{N_0-n_2}}^{t^{\tilde{n}-n_2}}\left\| D^{(m)}\left( \cdot \right) \right\|^2\, dt}  \\ 
\leq \Delta t^{n_1} C_h\sqrt{\tf}e^{C_h \tf}\sqrt{\sum_{m=0}^{n_1}\int_0^{\tf}\left\| D^{(m)}\left( \cdot \right) \right\|^2\, dt} , 
\end{multline*} 
with appropriate values of $n_1$ and $n_2$ chosen term-by-term.  This 
proves~\eqref{eq:side_errors} at all synchronization times.  

We can now apply the bounds at the synchronization times along 
with~\eqref{eq:L2_reg}-\eqref{eq:init_accuracy3}  
and~\eqref{eq:L2_10} to prove~\eqref{eq:L2_errors}; we have 
\begin{multline*}
   \sum_{i=1,2} \sum_{\tilde{n}=1}^N \sum_{n=1}^{M_i} \llVert e_i^n \rrVert^2 
    \leq \sum_{i=1,2} \sum_{\tilde{n}=1}^{N_0-1} \sum_{n=1}^{M_i} \llVert e_i^n \rrVert^2 \\ 
    + 2 \sum_{i=1,2}\sum_{\tilde{n}=N_0}^N \sum_{n=1}^{M_i}\llVert \eta_i^n \rrVert^2 
    +2 \sum_{i=1,2}\sum_{\tilde{n}=N_0}^N \sum_{n=1}^{M_i}\llVert \psi_i^n \rrVert^2 \\ 
    \leq C\Delta t^{2p} +
    C\Delta t \sum_{i=1,2}\sum_{\tilde{k}=0}^{N}A_i^{\tilde{k},M_i} 
      +  C\sum_{i=1,2} \Delta t_i^{2q+2} \sum_{m=0}^{q+1} \int_0^{\tf} \left\| D^{(m)} u_i\right\|^2\, dt \\ 
    +C\sum_{i=1,2} \Delta t^{2r_i+2} \left( 
    \sum_{m=0}^{r_i+1} \int_0^{\tf} \left\| D^{(m)} u_{i} \right\|_{\Gamma}^2\, dt
     +\sum_{m=0}^{r_i+1} \int_0^{\tf} \left\| D^{(m)} F_i\right\|_{\Gamma}^2\, dt \right) \\ 
    \leq C \Delta t^{2p} 
    + C \left( \sum_{i=1,2} \Delta t_i^{2q+2} +\Delta t^{2r_i+2} \right),  
\end{multline*} 
from which~\eqref{eq:L2_errors} follows by taking square-roots 
throughout.  Finally,~\eqref{eq:side_errors} can now be shown for the 
side value errors at times between the synchronization times by 
using~\eqref{eq:L2_5} and applying the above results and analogous arguments.  
\end{proof} 
\section{Proof of Theorem~\ref{thm:nodal}}
\label{proof:thm:nodal}
\begin{proof}
At each synchronization time $t^{\tilde{k}}$, define the temporary variables 
$E_i^{\tilde{k}}:=u_i (t^{\tilde{k}}) - U_i^{M_i}$, 
where $U_i^{M_i}$ is the side value associated with the final time on the 
coupling window with index $\tilde{k}$.  We also define 
\[
E_{\tilde{k}} := \sqrt{\sum_{i=1,2} \left\| E_i^{\tilde{k}} \right\|^2}  
\]
and reuse some other error notations from~\defref{defn:errors}.  
Insert $v_i^n = \tw_i|_{I_i^n}$ in~\eqref{eq:L2_2} for each $n$, then 
sum over $n$ and $\tilde{k}$.  It follows that 
\begin{multline}
(E_{\tilde{n}})^2 
= \sum_{i=1,2} \left( E_i^{N_0-1} ,\tw_i \left( t^{N_0 -1} \right) \right)  \\ 
+\sum_{i=1,2}\sum_{\tilde{k}=N_0}^{\tilde{n}}\sum_{n=1}^{M_i}\int_{I_i^n} \left( e_i^n , \dot{\tw}_i \right)  -L\left( e_i^n , \tw_i \right)\, dt  
-\sum_{i=1,2} \sum_{\tilde{k}=N_0}^{\tilde{n}} \int_{I^{\tilde{k}}}\left( EF_{i}^{\tilde{k}} , \tw_i \right)_\Gamma \, dt. \label{eq:nodal_1}
\end{multline} 
We manipulate first the coupling terms 
in~\eqref{eq:nodal_1}: 
\begin{multline}
\int_{I^{\tilde{k}}}\left( EF_{i}^{\tilde{k}} , \tw_i \right)_\Gamma \, dt 
=\int_{I^{\tilde{k}}}\left( EF_{i}^{\tilde{k}} , \tw_i -w_i \right)_\Gamma \, dt \label{eq:nodal_2} \\ 
+\int_{I^{\tilde{k}}}\left( EF_{i}^{\tilde{k}} , w_{i}-\hw_{\Gamma ,i}^{\tilde{k}} \right)_\Gamma \, dt +\int_{I^{\tilde{k}}}\left( EF_{i}^{\tilde{k}} , \hw_{\Gamma ,i}^{\tilde{k}} \right)_\Gamma \, dt .
\end{multline} 
We apply~\eqref{eq:L2_8},~\eqref{eq:semi_3e} and 
use~\defref{defn:errors} to find 
\begin{multline*}
   \sum_{i=1,2} \int_{I^{\tilde{k}}}\left( EF_{i}^{\tilde{k}} , \hw_{\Gamma ,i}^{\tilde{k}} \right)_\Gamma \, dt \\ 
   = \int_{I^{\tilde{k}}}\left( b_{1,1} e_{\Gamma ,1}^{\tilde{k}}+b_{1,2} e_{\Gamma ,2}^{\tilde{k}} , \hw_{\Gamma ,1}^{\tilde{k}} \right)_\Gamma \, dt +\int_{I^{\tilde{k}}}\left( b_{2,1} e_{\Gamma ,1}^{\tilde{k}}+b_{2,2} e_{\Gamma ,2}^{\tilde{k}} , \hw_{\Gamma ,2}^{\tilde{k}} \right)_\Gamma \, dt \\
    = \int_{I^{\tilde{k}}}\left( b_{1,1} \hw_{\Gamma ,1}^{\tilde{k}}+b_{2,1}\hw_{\Gamma ,2}^{\tilde{k}} , e_{\Gamma ,1}^{\tilde{k}} \right)_\Gamma \, dt +\int_{I^{\tilde{k}}}\left( b_{1,2} \hw_{\Gamma ,1}^{\tilde{k}}+b_{2,2}\hw_{\Gamma ,2}^{\tilde{k}} , e_{\Gamma ,2}^{\tilde{k}} \right)_\Gamma \, dt \\ 
    = \sum_{i=1,2} 
    \int_{I^{\tilde{k}}}\left( \whL_{i}^{\tilde{k}} , \psi_{\Gamma ,i}^{\tilde{k}} \right)_\Gamma \, dt 
    = \sum_{i=1,2} \sum_{n=1}^{M_i} \int_{I_i^n}\left( \whL_{i}^{\tilde{k}} , e_i^n \right)_\Gamma \, dt  \\ 
     = \sum_{i=1,2} \sum_{n=1}^{M_i} \int_{I_i^n}\left( \whL_{i}^{\tilde{k}} -\Lambda_{i}, e_i^n \right)_\Gamma \, dt +\sum_{i=1,2} \sum_{n=1}^{M_i} \int_{I_i^n}\left( \Lambda_{i} , e_i^n \right)_\Gamma \, dt 
\end{multline*} 
Insert this into~\eqref{eq:nodal_2}, the result of which is then 
inserted into~\eqref{eq:nodal_1}.  We add and subtract $w_i$ 
as needed and apply~\eqref{eq:semi_3a} to obtain 
\begin{equation*}
\begin{aligned}
(E_{\tilde{n}})^2 
&= \sum_{i=1,2}\left\{ \left( E_i^{N_0-1} ,w_i \left( t^{N_0 -1} \right) \right)  
+ \left( E_i^{N_0-1} , (\tw_i -w_i) \left( t^{N_0 -1} \right)\right) \right\}  \\
&+\sum_{i=1,2}\sum_{\tilde{k}=N_0}^{\tilde{n}}\sum_{n=1}^{M_i} \int_{I_i^n} \left( e_i^n , \dot{\tw}_i -\dot{w}_i \right)   
-L\left( e_i^n , \tw_i -w_i \right) \, dt \\ 
&-\sum_{i=1,2} \sum_{\tilde{k}=N_0}^{\tilde{n}}\int_{I^{\tilde{k}}} \left( EF_{i}^{\tilde{k}}, \tw_i -w_i \right)_\Gamma 
+ \left( EF_{i}^{\tilde{k}}, w_{i}-\hw_{\Gamma ,i}^{\tilde{k}}  \right)_\Gamma \, dt \\ 
&-\sum_{i=1,2} \sum_{\tilde{k}=N_0}^{\tilde{n}} \sum_{n=1}^{M_i} \int_{I_i^n}\left( (\whL_{i}^{\tilde{k}}- \Lambda_{i}),e_i^n  \right)_\Gamma \, dt  .
\end{aligned}
\end{equation*} 
Bounds for these terms are derived by using~\lemref{lem:aux_bounds}, 
along with the Cauchy-Schwarz inequality,~\lemref{lem:ineqs}, then also 
\[
\sqrt{\sum_{i=1,2} \left\| \tw_i\left( t^{N_0 -1} \right) -w_i \left( t^{N_0 -1}\right) \right\|^2}
\leq C (\tf )\sqrt{ \sum_{i=1,2} \int_0^{\tf} \sum_{m=0}^1 \left\| D^{(m)} ( w_i - \tw_i ) \right\|^2 \, dt} 
\]
and~\eqref{eq:semi_3e},~\eqref{eq:assumption_1}, 
plus~\eqref{eq:assumption_2} to achieve 
\begin{multline*}
    (E_{\tilde{n}})^2 
    \leq 
    C\left( 1+\Delta t^{q+1-n_s}\right)E_{N_0-1}E_{\tilde{n}} \\ 
    +C\left( \Delta t^{q+1-n_s} +\Delta t^{r_1+1} +\Delta t^{r_2+1} \right) E_{\tilde{n}} \sqrt{ \sum_{i=1,2}\sum_{\tilde{k}=1}^N \sum_{n=1}^{M_i} \int_{I_i^n}\left\| e_i^n \right\|^2 \, dt} \\ 
    +C\left( \Delta t^{q+1-n_s} +\Delta t^{r_1+1} +\Delta t^{r_2+1} \right) E_{\tilde{n}}\sqrt{ \sum_{i=1,2}\sum_{\tilde{k}=1}^N \int_{I_i^{\tilde{k}}}\left\| EF_i^{\tilde{k}} \right\|^2 \, dt}  .
\end{multline*} 
Divide through by $E_{\tilde{n}}$.  
The final result follows from~\eqref{eq:L2_9},~\thmref{thm:L2}, 
and~\eqref{eq:assumption_3}.  
\end{proof} 

\end{document}